\newcounter{dummy} 
\numberwithin{dummy}{section}
\newtheorem{lemma}[dummy]{Lemma}  %%share counter with remark
\newtheorem{theorem}[dummy]{Theorem}
\theoremstyle{definition}
\newtheorem{definition}[dummy]{Definition}
\newtheorem{proposition}[dummy]{Proposition} 
\newtheorem{corollary}[dummy]{Corollary} 
\theoremstyle{definition}
\newtheorem{example}[dummy]{Example}
\numberwithin{equation}{section}
\newcommand{\tri
}{\triangleleft
}
\newcommand{\X}{\mathbb{X}}
\newcommand{\C}{\mathbb{C}}
\newcommand{\Y}{\mathbb{Y}}
\newcommand{\m}{{\sf m}}
\newcommand{\Set}{{\sf Set}}
\newcommand{\duo}{\mathsf{duo}}
\newcommand{\indep}{{\sf indep}}
\newcommand{\Poly}{{\sf Poly}}
\newcommand{\lCore}{{\sf core_{\ell}}}
\newcommand{\rCore}{{\sf core_r}}
\newcommand{\nD}{{\sf normalDuo}}
\newcommand{\Iso}{{\sf Isomix}}
\newcommand{\coeval}{{\sf coev}}
\newcommand{\eval}{{\sf ev}}
\newcommand{\dashvv}{\dashv \!\!\!\! \dashv}  
\newcommand{\yon}{\mathcal{y}}
\newcommand{\then}{\fatsemi}
\newcommand{\id}{{\sf id}} 
\newcommand{\ox}{\otimes}
\newcommand{\oa}{\oplus}
\newcommand{\op}{\mathsf{op}}
\newcommand{\dual}{\mathbin{\text{\reflectbox{$\Vdash$}}}}
\newcommand{\lollipop}{\ensuremath{\!-\!\!\circ}}
\newcommand{\x}{\times}
\let\oldsection\section% Store \section
\renewcommand{\section}{% Update \section
  \renewcommand{\theequation}{\thesection.\arabic{equation}}% Update equation number
  \oldsection}% Regular \section
\let\oldsubsection\subsection% Store \subsection
\renewcommand{\subsection}{% Update \subsection
  \renewcommand{\theequation}{\thesubsection.\arabic{equation}}% Update equation number
  \oldsubsection}% Regular \subsection  
\let\oldsubsubsection\subsubsection% Store \subsection
\renewcommand{\subsubsection}{% Update \subsection
  \renewcommand{\theequation}{\thesubsubsection.\arabic{equation}}% Update equation number
  \oldsubsubsection}% Regular \subsection
\newlength{\llcfoo}
\newdimen\w@dth
\def\setw@dth#1#2{\setbox\z@\hbox{\scriptsize $#1$}\w@dth=\wd\z@
\setbox\@ne\hbox{\scriptsize $#2$}\ifnum\w@dth<\wd\@ne \w@dth=\wd\@ne \fi
\advance\w@dth by 1.2em}
\def\t@^#1_#2{\allowbreak\def\n@one{#1}\def\n@two{#2}\mathrel
{\setw@dth{#1}{#2}
\mathop{\hbox to \w@dth{\rightarrowfill}}\limits
\ifx\n@one\empty\else ^{\box\z@}\fi
\ifx\n@two\empty\else _{\box\@ne}\fi}}
\def\t@@^#1{\@ifnextchar_ {\t@^{#1}}{\t@^{#1}_{}}}
\def\t@left^#1_#2{\def\n@one{#1}\def\n@two{#2}\mathrel{\setw@dth{#1}{#2}
\mathop{\hbox to \w@dth{\leftarrowfill}}\limits
\ifx\n@one\empty\else ^{\box\z@}\fi
\ifx\n@two\empty\else _{\box\@ne}\fi}}
\def\t@@left^#1{\@ifnextchar_ {\t@left^{#1}}{\t@left^{#1}_{}}}
\def\two@^#1_#2{\def\n@one{#1}\def\n@two{#2}\mathrel{\setw@dth{#1}{#2}
\mathop{\vcenter{\hbox to \w@dth{\rightarrowfill}\kern-1.7ex
                 \hbox to \w@dth{\rightarrowfill}}%
       }\limits
\ifx\n@one\empty\else ^{\box\z@}\fi
\ifx\n@two\empty\else _{\box\@ne}\fi}}
\def\tw@@^#1{\@ifnextchar_ {\two@^{#1}}{\two@^{#1}_{}}}
\def\tofr@^#1_#2{\def\n@one{#1}\def\n@two{#2}\mathrel{\setw@dth{#1}{#2}
\mathop{\vcenter{\hbox to \w@dth{\rightarrowfill}\kern-1.7ex
                 \hbox to \w@dth{\leftarrowfill}}%
       }\limits
\ifx\n@one\empty\else ^{\box\z@}\fi
\ifx\n@two\empty\else _{\box\@ne}\fi}}
\def\t@fr@^#1{\@ifnextchar_ {\tofr@^{#1}}{\tofr@^{#1}_{}}}
\newdimen\W@dth
\def\setW@dth#1#2{\setbox\z@\hbox{$#1$}\W@dth=\wd\z@
\setbox\@ne\hbox{$#2$}\ifnum\W@dth<\wd\@ne \W@dth=\wd\@ne \fi
\advance\W@dth by 1.2em}
\def\T@^#1_#2{\allowbreak\def\N@one{#1}\def\N@two{#2}\mathrel
{\setW@dth{#1}{#2}
\mathop{\hbox to \W@dth{\rightarrowfill}}\limits
\ifx\N@one\empty\else ^{\box\z@}\fi
\ifx\N@two\empty\else _{\box\@ne}\fi}}
\def\T@@^#1{\@ifnextchar_ {\T@^{#1}}{\T@^{#1}_{}}}
\def\T@left^#1_#2{\def\N@one{#1}\def\N@two{#2}\mathrel{\setW@dth{#1}{#2}
\mathop{\hbox to \W@dth{\leftarrowfill}}\limits
\ifx\N@one\empty\else ^{\box\z@}\fi
\ifx\N@two\empty\else _{\box\@ne}\fi}}
\def\T@@left^#1{\@ifnextchar_ {\T@left^{#1}}{\T@left^{#1}_{}}}
\def\Tofr@^#1_#2{\def\N@one{#1}\def\N@two{#2}\mathrel{\setW@dth{#1}{#2}
\mathop{\vcenter{\hbox to \W@dth{\rightarrowfill}\kern-1.7ex
                 \hbox to \W@dth{\leftarrowfill}}%
       }\limits
\ifx\N@one\empty\else ^{\box\z@}\fi
\ifx\N@two\empty\else _{\box\@ne}\fi}}
\def\T@fr@^#1{\@ifnextchar_ {\Tofr@^{#1}}{\Tofr@^{#1}_{}}}
\def\Two@^#1_#2{\def\N@one{#1}\def\N@two{#2}\mathrel{\setW@dth{#1}{#2}
\mathop{\vcenter{\hbox to \W@dth{\rightarrowfill}\kern-1.7ex
                 \hbox to \W@dth{\rightarrowfill}}%
       }\limits
\ifx\N@one\empty\else ^{\box\z@}\fi
\ifx\N@two\empty\else _{\box\@ne}\fi}}
\def\Tw@@^#1{\@ifnextchar_ {\Two@^{#1}}{\Two@^{#1}_{}}}
\def\to{\@ifnextchar^ {\t@@}{\t@@^{}}}
\def\from{\@ifnextchar^ {\t@@left}{\t@@left^{}}}
\def\tofro{\@ifnextchar^ {\t@fr@}{\t@fr@^{}}}
\def\To{\@ifnextchar^ {\T@@}{\T@@^{}}}
\def\From{\@ifnextchar^ {\T@@left}{\T@@left^{}}}
\def\Two{\@ifnextchar^ {\Tw@@}{\Tw@@^{}}}
\def\Tofro{\@ifnextchar^ {\T@fr@}{\T@fr@^{}}}
\tikzstyle{strings}=[baseline={([yshift=-.5ex]current bounding box.center)}]
\tikzset{every picture/.append style={scale=.5}, transform shape, strings}
\tikzset{%
symbol/.style={%
draw=none,
every to/.append style={%
edge node={node [sloped, allow upside down, auto=false]{$#1$}}}
}
}
\tikzset{simple/.style={}}
\tikzset{nothing/.style={outer sep=-3.4pt}}
\tikzset{map/.style={draw,fill=white, rectangle}}
\tikzstyle{filled}=[-, fill=black]
\tikzset{dot/.style={thick, fill=black, circle, scale=1, inner sep = .05cm}}
\tikzset{oa/.style={draw, scale=0.9,minimum height=.1cm,circle,append after command={
[shorten >=\pgflinewidth, shorten <=\pgflinewidth,]
(\tikzlastnode.north) edge (\tikzlastnode.south)
(\tikzlastnode.east) edge (\tikzlastnode.west)
} } }
\tikzset{ox/.style={draw, scale=0.9,minimum height=.1cm,circle,append after command={
[shorten >=\pgflinewidth, shorten <=\pgflinewidth,]
(\tikzlastnode.north west) edge (\tikzlastnode.south east)
(\tikzlastnode.north east) edge (\tikzlastnode.south west) } } }
\tikzset{circ/.style={
shape=circle, inner sep=1pt, draw}}
\tikzstyle{none}=[inner sep=-1pt]
\tikzstyle{circle}=[shape=circle,draw]
\tikzstyle{onehalfcircle}=[shape=circle, scale=1.5, draw]
\tikzstyle{twocircle}=[shape=circle, scale=2, draw]
\tikzstyle{black}=[shape=circle, fill=black, draw]
\tikzset{wires/.style={}}
\tikzset{box/.style={inner sep=0pt, thick, draw=black, text height=1.5ex, text depth=.25ex, 
text centered, minimum height=3em, anchor=center}}
\newcommand{\linmonw} {\xymatrixcolsep{4mm} \xymatrix{ \ar@{-||}[r]^{\circ} & }}
\newcommand{\linmonwl} {\xymatrixcolsep{4mm} \xymatrix{ \ar@{-||}[r]^{\otimes\;\tri} & }}
\newcommand{\linmonwr} {\xymatrixcolsep{4mm} \xymatrix{ \ar@{-||}[r]^{\tri\;\otimes} & }}
\newcommand{\linmonwrdavid} {\xymatrixcolsep{4mm} \xymatrix{ \ar@{-||}[r]^{\otimes\;\tri} & }}
\newcommand{\linmonwldavid} {\xymatrixcolsep{4mm} \xymatrix{ \ar@{-||}[r]^{\tri\;\otimes} & }}
\newcommand{\linmondavid} {\xymatrixcolsep{4mm} \xymatrix{ \ar@{-||}[r] & }}
\newcommand{\lincomonb} {\xymatrixcolsep{4mm} \xymatrix{ \ar@{-||}[r]_{\bullet} & }}
\newcommand{\lincomonw} {\xymatrixcolsep{4mm} \xymatrix{ \ar@{-||}[r]_{\circ} & }}
\newcommand{\lincomonwr} {\xymatrixcolsep{4mm} \xymatrix{ \ar@{-||}[r]_{\tri\;\otimes} & }}
\newcommand{\lincomonwl} {\xymatrixcolsep{4mm} \xymatrix{ \ar@{-||}[r]_{\otimes\;\tri} & }}
\newcommand{\linbialgw} {\xymatrixcolsep{4mm} \xymatrix{ \ar@{-||}[r]^{\circ}_{\circ} & }}
\newcommand{\linbialgwl} {\xymatrixcolsep{4mm} \xymatrix{ \ar@{-||}[r]^{\otimes\;\tri}_{\otimes\;\tri} & }}
\newcommand{\linbialgwr} {\xymatrixcolsep{4mm} \xymatrix{ \ar@{-||}[r]^{\tri\;\otimes}_{\tri\;\otimes} & }}
\newcommand{\linbialgwb} {\xymatrixcolsep{4mm} \xymatrix{ \ar@{-||}[r]^{\circ}_{\bullet} & }}
\newcommand{\monoid}[1]{(#1, \mulmap{1.5}{white}: #1 \ox #1 \to #1, \unitmap{1.5}{white}: \yon \to #1)}
\newcommand{\comonoid}[1]{(#1, \comulmap{1.5}{white}: #1 \to #1 \ox #1, \counitmap{1.5}{white}: #1 \to \yon)}
\newcommand{\bialg}[1]{(#1, \mulmap{1.5}{white}, \unitmap{1.5}{white}, \comulmap{1.5}{black}, \counitmap{1.5}{black})}
\newcommand{\bialgb}[1]{(#1, \mulmap{1.5}{black}, \unitmap{1.5}{black}, \comulmap{1.5}{white}, \counitmap{1.5}{white})}
\newcommand{\mulmap}[2]{
	\begin{tikzpicture}[scale={#1}]
		\begin{pgfonlayer}{nodelayer}
			\node [style=circle, scale=0.4, fill={#2}] (5) at (0.32, 0.25) {};
			\node [style=none] (6) at (0.07, 0.5) {};
			\node [style=none] (7) at (0.57, 0.5) {};
			\node [style=none] (8) at (0.32, 0) {};
			\node [style=none] (9) at (0.64, 0.5) {};
		\end{pgfonlayer}
		\begin{pgfonlayer}{edgelayer}
			\draw [style=none] (8.center) to (5);
			\draw [style=none, bend left, looseness=1.25] (5) to (6.center);
			\draw [style=none, bend right, looseness=1.25] (5) to (7.center);
		\end{pgfonlayer}
	\end{tikzpicture}	
}
\newcommand{\unitmap}[2]{
\begin{tikzpicture}[scale=#1]
	\begin{pgfonlayer}{nodelayer}
		\node [style=circle, scale=0.4, fill=#2] (0) at (0, 0) {};
		\node [style=none] (1) at (0, -0.4) {};
		\node [style=none] (4) at (0.13, 0) {};
	\end{pgfonlayer}
	\begin{pgfonlayer}{edgelayer}
		\draw [style=none] (0) to (1.center);
	\end{pgfonlayer}
\end{tikzpicture} }
\newcommand{\comulmap}[2]{
	\begin{tikzpicture}[scale={#1}]
		\begin{pgfonlayer}{nodelayer}
			\node [style=circle, scale=0.4, fill={#2}] (5) at (0.32, 0.25) {};
			\node [style=none] (6) at (0.07, 0) {};
			\node [style=none] (7) at (0.57, 0) {};
			\node [style=none] (8) at (0.32, 0.5) {};
			\node [style=none] (9) at (0.64, 0) {};
		\end{pgfonlayer}
		\begin{pgfonlayer}{edgelayer}
			\draw [style=none] (8.center) to (5);
			\draw [style=none, bend right, looseness=1.25] (5) to (6.center);
			\draw [style=none, bend left, looseness=1.25] (5) to (7.center);
		\end{pgfonlayer}
	\end{tikzpicture}
}
\newcommand{\counitmap}[2]{
\begin{tikzpicture}[scale=#1, rotate=180]
	\begin{pgfonlayer}{nodelayer}
		\node [style=circle, scale=0.4, fill=#2] (0) at (0, 0) {};
		\node [style=none] (1) at (0, -0.4) {};
		\node [style=none] (4) at (0.13, 0) {};
	\end{pgfonlayer}
	\begin{pgfonlayer}{edgelayer}
		\draw [style=none] (0) to (1.center);
	\end{pgfonlayer}
\end{tikzpicture}
}
\newcommand{\biglens}[2]{
     \begin{bmatrix}{\vphantom{f_f^f}#2} \\ {\vphantom{f_f^f}#1} \end{bmatrix}
}
\newcommand{\littlelens}[2]{
     \begin{bsmallmatrix}{\vphantom{f}#2} \\ {\vphantom{f}#1} \end{bsmallmatrix}
}
\newcommand{\coclose}[2]{
  \relax\if@display
     \biglens{#2}{#1}
  \else
     \littlelens{#2}{#1}
  \fi
}
\newcommand{\qqand}{\qquad\textnormal{and}\qquad}
\title{What kind of linearly distributive category do polynomial functors form?}
\author{
    David I.\ Spivak\thanks{Topos Institute, Berkeley} \and Priyaa Varshinee Srinivasan$^*$}
\date{\today}
\begin{document}

\maketitle

\begin{abstract}

This paper has two purposes. The first is to extend the theory of linearly distributive categories by considering the structures that emerge in a special case: the normal duoidal category $(\Poly,\yon,\otimes,\tri)$ of polynomial functors under Dirichlet and substitution product. This is an isomix LDC which is neither $*$-autonomous nor fully symmetric. The additional structures of interest here are a closure for $\otimes$ and a co-closure for $\tri$, making $\Poly$ a bi-closed LDC, which is a notion we introduce in this paper. 

The second purpose is to use $\Poly$ as a source of examples and intuition about various structures that can occur in the setting of LDCs, including duals, cores, linear monoids, and others, as well as how these generalize to the non-symmetric setting. To that end, we characterize the linearly dual objects in $\Poly$: every linear polynomial has a right dual which is a representable. It turns out that the linear and representable polynomials also form the left and right cores of $\Poly$. Finally, we provide examples of linear monoids, linear comonoids, and linear bialgebras in $\Poly$. 

\end{abstract}
\setcounter{tocdepth}{1}
\tableofcontents

\section{Introduction}

Cockett and Seely introduced weak distributive categories, renamed to linear distributive categories (LDCs), as a categorical semantics for multiplicative linear logic \cite{CS97}. Adding negation to the semantics produces $*$-autonomous categories. The primary contribution of this paper is to present a detailed study of a reasonably approachable yet a rich example of an LDC, one which is neither symmetric nor $*$-autonomous, namely the category $\Poly$ of polynomial functors and natural transformations. While $\Poly$ is a well-known category with a vast body of literature, to the best of our knowledge this paper is the first to explore $\Poly$ purely from the perspective of LDCs.

Indeed, though this project is focused on the category $\Poly$ as an LDC, our definitions and results are more general. We first prove a straightforward result, that normal duoidal categories are isomix LDCs---there is a faithful functor as we'll see in \cref{lemma.normalDuo_Iso}---thereby providing a family of examples for isomix LDCs, one of which is $\Poly$. These results are presented in \cref{Sec: duoidal}.

The category $\Poly$ is not $*$-autonomous: not every object in $\Poly$ has a dual. However, there are certain pairs of polynomials which are duals: for any set $A$, the linear polynomial $A \yon$ is left dual to the representable $\yon^A$, and these are the only polynomials with duals.%
\footnote{We will explain this notation in \cref{Sec: Poly prelim}.} 
We prove these results in more generality by introducing the notion of \emph{biclosed LDCs} which are LDCs with a closure for the $\ox$ operation and a coclosure for the $\tri$ operation, i.e.\ LDCs for which $(- \ox a)$ has a right adjoint $[a, -]$, and $(- \tri a)$ has a left adjoint $\coclose{-}{a}$. In $\Poly$, the former $[-,-]$ is the closure for the Dirichlet product and the latter $\coclose{-}{-}$ is the left Kan extension. We characterize the dual objects in biclosed LDCs, generalizing the above results for $\Poly$; all this is shown in \cref{Sec: Duals}.

Next we explore the notion of \textit{core} in mix LDCs within our context. Mix LDCs are equipped with a mix map $\m: \bot \to \top$, which induces a natural transformation $\indep_{a,b}: a \ox b \to a \tri b$ for all objects $a, b$. An isomix LDC is a mix LDC in which the mix map $\m: \bot \cong \top$ is an isomorphism. The core of a symmetric (iso)mix LDC is the full subcategory spanned by those objects for which the $\indep_{a,-}$ is a natural isomorphism. That is, an object $a$ is in the core if the map $\indep_{a,x}: a \ox x \cong a \tri x$ is an isomorphism, for all $x$; from this it follows from the symmetry that $\indep_{x,a}: x \ox a \cong x \tri a$ is also an isomorphism. 

The notion of core in mix LDCs has been studied only for symmetric LDCs. Inspired by $\Poly$, this paper explores the notion of core in non-symmetric settings. Without the assumption of symmetry, one may define the left core (resp.\ right core) to be the full subcategory spanned by the objects $a$ for which $\indep_{a,-}$ (resp.\ $\indep_{-,a}$) is a natural isomorphism. We show that in $\Poly$, the linear polynomials ($A\yon$) comprise the left core and the representables ($\yon^A$) comprise the right core. The cores and the duals thus coincide: a polynomial is in the left core iff it is a left dual iff it is linear, and it is in the right core iff it is a right dual iff it is representable.  

For a mix LDC $\X$, we say that the left and the right cores are \emph{opposing} when $\lCore(\X) \cong \rCore(\X)^{\op}$. We will show that $\Poly$ has opposing cores, which are equivalent to $\Set$ and $\Set^\op$, respectively. Another example of mix LDCs with opposing cores are compact closed categories.  \cref{Sec:Core} provides the above definitions and proves relevant results. 

Linear monoids and linear comonoids generalize Frobenius algebras from monoidal categories to LDCs. We show that $\Poly$ has non-trivial linear-monoid-\emph{like} structures: every monoid in $\Set$ defines a \emph{left} linear monoid and every set induces a \emph{right} linear monoid in $\Poly$; these definitions are new. Such linear monoids and linear comonoids in $\Poly$ interact to produce left and right linear bialgebras. \cref{Sec.linmon} contains these results.

\paragraph{Contributions.}
The following are the main technical contributions of this paper: 

\begin{enumerate}[(1), nosep]
    \item We prove that all normal duoidal categories are isomix LDCs. In particular, the category $\Poly$ is an isomix LDC. 
    \item We define the notion of biclosed LDCs.
    \item We characterize the linear duals in isomix biclosed LDCs. 
    \item We prove that a polynomial $q$ is left dual of $p$, i.e.\ $q\dashvv p$ iff $q = A \yon$ and $p = \yon^A$ for some set $A$.  
    \item We define the left and the right cores of non-symmetric LDCs. In $\Poly$, we show that the left core consists of the linear polynomials ($A\yon$) and the right core consists of the representable polynomials ($\yon^A$).
    \item We show that for every monoid in $\Set$, we get a left linear monoid and a right linear comonoid in $\Poly$. Moreover, every set induces a left linear comonoid and a right linear monoid in $\Poly$. These structures interact to produce right and left linear bialgebras which are the notion we introduce in this paper. 
\end{enumerate}

In \cref{sec.prelim} we recollect the preliminary definitions and results used in this paper. 
 
\subsection*{Acknowledgments}
We thank Harrison Grodin and Reed Mullanix for their conjecture, that normal duoidal categories are linear distributive categories, which spawned the present paper.

This material is based upon work supported by the Air Force Office of Scientific Research under award number FA9550-23-1-0376.

\section{Preliminaries}\label{sec.prelim}

For preliminaries on $\Poly$ see \cref{Sec: Poly prelim}; for preliminaries on LDCs see \cref{Sec: LDC prelim}.

\subsection{The category {\sf Poly}}
\label{Sec: Poly prelim}

While there are many equivalent definitions of polynomial functors \cite{NiS23},%
\footnote{For a friendly and very non-technical introduction to polynomial functors, see \url{https://topos.site/blog/2022-01-19-poly-makes-me-happy/} or \url{https://topos.site/blog/2023-11-13-solving-problem-solving/}.}
in this paper we view them as coproducts of representables. 

Given an object $A : \C$, a functor $F: \C \to \Set$ is {\bf represented by $A$} if there is an isomorphism $F \cong \C(A, -)$; in this case we also say that $F$ is {\bf representable}. For functors $\Set\to\Set$, we denote the functor $\Set(A,-)$ by $\yon^A$.

\begin{definition}[{\cite[Defintion 2.1]{NiS23}}]
A {\bf polynomial functor} is a functor $p: \Set \to \Set$ that is isomorphic to a coproduct of representables, i.e.\
\[ p \cong \sum_{i : I} \yon^{A_i} \]
for some indexing set $I:\Set$ and sets $A_i:\Set$. We refer to the elements of $I$ as the {\bf positions} of $p$, and we refer to elements of $A_i$ as the {\bf directions} of $p$ at position $i:I$. 
\end{definition}

The following are a few examples of polynomial functors:

\begin{enumerate}[(i), nosep]
\item $\yon^A$ is a representable polynomial, for any $A:\Set$.
\item $\sum_{i : I} \yon \cong I \yon$ is a linear polynomial, for any $I:\Set$.
\item $I \yon^0$ is a constant polynomial; it sends $X\mapsto I$ for all $X:\Set$.
\item $\yon^3 + \yon^2$ where $2$ and $3$ are 2-element and 3-element sets respectively. For this polynomial, the index set $I = 2$, $A_{1} = 3$, and $A_2 = 2$. 
\end{enumerate}

Corolla forests are an intuitive visualization tool for polynomials \cite[Chapter 2]{NiS23}. 
For example, the polynomial $\yon^3 + \yon^2$, can be drawn as a corolla forest as follows:
\[  \includegraphics[scale=0.05]{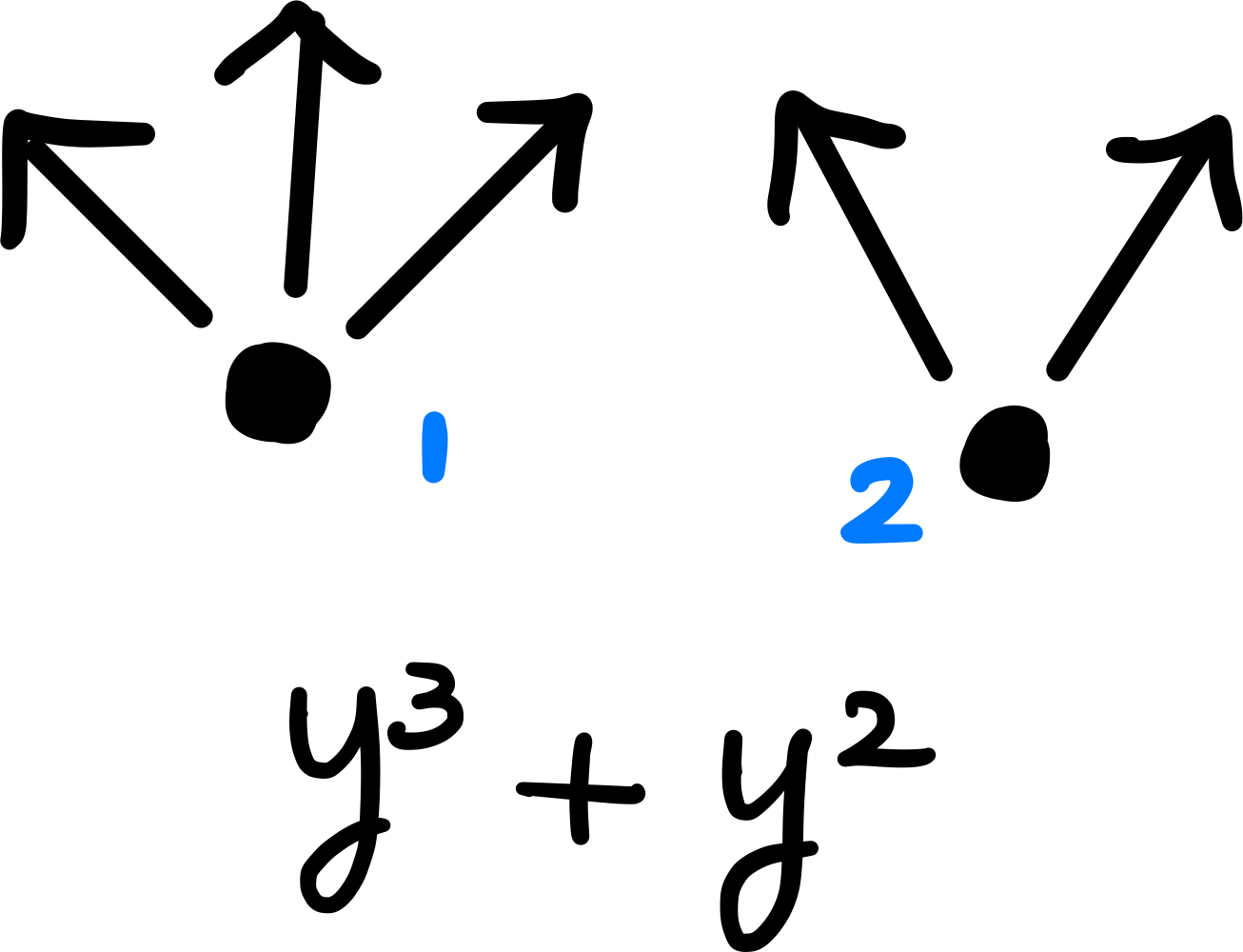} \]

For a polynomial $p \cong \sum_{i : I} \yon^{A_i}$, the elements of $I$ are referred to as the {\bf positions} of $p$. Thus, each corolla in a corolla forest corresponds to a position of its polynomial. For each $i:I$, the elements of $A_i$ are referred to as the {\bf directions at that position}. The directions at position $i$ are denoted as the branches of the corolla corresponding to that position.

The positions of a polynomial $p$ can be found by evaluating the polynomial functor at the single element set $1$: 
\[ p(1) = \sum_{i:I} \yon^{A_i} (1) \cong \sum_{i:I} 1^{A_i} \cong \sum_{i:I} 1 \cong I \]
Hence, writing $p[P]\coloneqq A_P$ for any position $P:p(1)$, we can denote a polynomial succinctly as follows: 
\begin{equation}\label{eqn.succinct_poly}
 p \cong \sum_{P:p(1)} \yon^{p[P]}
 \end{equation}

\begin{definition}
We define $\Poly$ to be the category of polynomial functors $\Set\to\Set$ and the natural transformations between them.
\end{definition}

We can give a fully set-theoretic account of the natural transformations between any two polynomial functors by using the universal property of coproducts and the Yoneda Lemma. Indeed, for any two polynomials $p = \sum_{P: p(1)} \yon^{p[P]}$ and $q = \sum_{Q: q(1)} \yon^{q[Q]}$,
\begin{align}
\nonumber
\Poly(p,q) &= \Poly\left(\sum_{i:P} \yon^{p[P]}, q\right) \\\nonumber
&\cong \prod_{P:p(1)} \Poly\left(\yon^{p[P]}, q\right) & \text{Universal property of coproduct}\\ \nonumber
&\cong \prod_{P:p(1)} q(p[P]) &  \text{Yoneda Lemma} \\  \nonumber
& \cong \prod_{P:p(1)} \sum_{Q: q(1)} p[P]^{q[Q]} \\\label{eqn.poly_map}
& = \prod_{P:p(1)} \sum_{Q: q(1)} \prod_{e: q[Q]} \sum_{d:p[P]} 1 
\end{align}
Since products distribute over sums, the expression in \eqref{eqn.poly_map} tells us that a natural transformation $\varphi: p \to q$ can be identified with:  
\begin{itemize}[nosep]
\item a function $\varphi_1: p(1) \to q(1)$, from the positions of $p$ to the the positions of $q$, and
\item for each $P:p(1)$, a function $\varphi_i^\sharp : q[\varphi_1(P)] \to p[P]$, from the directions of $q$ at $f_1(P)$ to directions of $p$ at $P$.
\end{itemize}

A map $\varphi$ of polynomials is called {\bf cartesian} if, for each $P:p(1)$, the map $\varphi_P^\sharp$ is a bijection.%
\footnote{In general, a natural transformation $\varphi$ is called \emph{cartesian} if its naturality squares are pullbacks. A map $\varphi$ of polynomials is cartesian in this sense iff it satisfies the above condition, that $\varphi_P^\sharp$ is a bijection.}

As an example of polynomial maps, let us consider the polynomials $p = \yon^3 + \yon^2$ and $q = \yon + \yon^2$. The following depicts a natural transformation $f: p \to q$: 
\[  \includegraphics[scale=0.15]{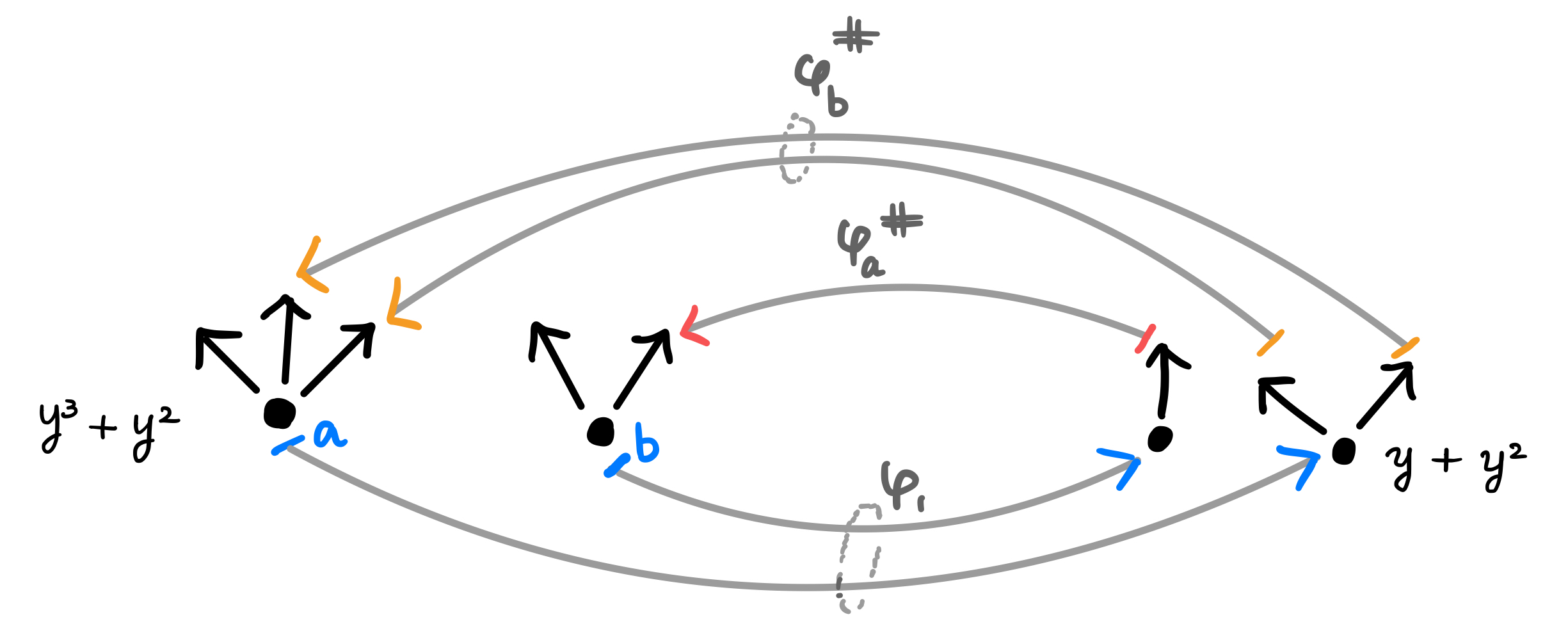} \]
One can count that there are $(3^1+3^2)\times(2^1+2^2)=72$ natural transformations $p\to q$.

Here are two useful functors between $\Poly$ and $\Set$, based on the positions and the directions of polynomials:
\begin{itemize}
\item the functor $\Poly(\yon, -): \Poly \to \Set$, which we denote by $-(1)$. For any polynomial $p$, an element of $p(1)$ is a position of $p$. 
\item the functor $\Poly(-,\yon): \Poly \to \Set^\op$, which we denote by $\Gamma_-$. For any polynomial $p$, an element of $\Gamma_p$ is a \emph{global section}: it picks a direction for each position of $p$: 
\begin{equation}\label{eqn.gamma}
	\Gamma_p \cong \prod_{P: p(1)} p[P]
\end{equation}
\end{itemize}

While $\Poly$ has many monoidal structures, in this article we are concerned with only two: the tensor (Dirichlet) product and the substitution product \cite{Spi22}. The tensor product $\ox$ is given by the Day convolution of Cartesian product $\times$ in $\Set$, and the substitution product $\tri$ is given by functor composition. They have straightforward formulas:
\begin{align}\label{eqn.dirichlet}
p \ox q = \sum_{P: p(1)} \yon^{p[P]} \ox \sum_{Q: q(1)} \yon^{q[Q]} &:= \sum_{(P,Q): p(1) \times q(1)} \yon^{p[P] \times q[Q]} \\\nonumber
p \tri q = \sum_{P: p(1)} \yon^{p[P]} \tri \sum_{Q: q(1)} \yon^{q[Q]} &:=  
 \sum_{P: p(1)} \Big(\sum_{Q: q(1)} \yon^{q[Q]} \Big)^{p[P]} 
 = \sum_{P:p(1)} \prod_{d:p[P]} \sum_{Q:q(1)} \prod_{e:q[Q]} \yon 
%= \sum_{P: p(1)} \sum_{Q': p[P] \to q(1)} \prod_{d: p[P]} \prod_{e: q[Q'(P)]} \yon
\end{align} 

We leave the proofs of the following two propositions as exercises for the reader.

\begin{proposition}\label{prop.lin_rep_strong_monoidal}~
\begin{enumerate}[a., nosep]
	\item The functor $A\mapsto A\yon$ is strong monoidal $(\Set,1,\times)\to(\Poly,\yon,\otimes)$.
	\item The functor $p\mapsto p(1)$ is strong monoidal $(\Poly,\yon,\otimes)\to(\Set,1,\times)$.
	\item The functor $A\mapsto\yon^A$ is strong monoidal $(\Set^\op,1,\times)\to(\Poly,\yon,\otimes)$.
	\item The functor $p\mapsto\Gamma_p$ is colax monoidal $(\Poly,\yon,\tri)\to(\Set^\op,1,\times)$.
\end{enumerate}
\end{proposition}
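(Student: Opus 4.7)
The plan is to verify each part by direct computation with the explicit formulas in \eqref{eqn.dirichlet} and \eqref{eqn.gamma}, together with the presentation of a polynomial via positions and directions.

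Parts (a)--(c) are essentially unwindings of the Dirichlet formula. For (a), $A\yon \cong \sum_{a:A}\yon^{1}$, so $A\yon\ox B\yon\cong\sum_{(a,b):A\times B}\yon^{1\times 1}\cong (A\times B)\yon$ and $1\yon\cong\yon$; the associator and unitor coherences reduce to those of $(\Set,1,\times)$. For (b), applying $-(1)$ to the Dirichlet formula yields $(p\ox q)(1)\cong\sum_{(P,Q):p(1)\times q(1)} 1\cong p(1)\times q(1)$, together with $\yon(1)\cong 1$. For (c), $\yon^{A}$ has a single position with directions $A$, so the Dirichlet formula gives $\yon^{A}\ox\yon^{B}\cong\yon^{A\times B}$ and $\yon^{1}\cong\yon$; the contravariance is built in, since a map $A\to B$ in $\Set$ induces $\yon^{B}\to\yon^{A}$ by precomposition. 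In each case naturality and coherence follow from the same bijections.

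Part (d) takes real work. I would first unpack positions and directions of a substitution: $(p\tri q)(1)\cong\sum_{P:p(1)}q(1)^{p[P]}$, so a position is a pair $(P,f)$ with $P:p(1)$ and $f:p[P]\to q(1)$, while $(p\tri q)[(P,f)]\cong\sum_{d:p[P]}q[f(d)]$. Hence
\[ \Gamma_{p\tri q}\cong\prod_{P:p(1)}\;\prod_{f:p[P]\to q(1)}\;\sum_{d:p[P]} q[f(d)]. \]
Since $\Gamma_{-}$ is contravariant, a colax structure map in $\Set^{\op}$ amounts to a function $\Gamma_p\times\Gamma_q\to\Gamma_{p\tri q}$ in $\Set$. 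I would define it by sending a pair $(\sigma,\tau)\in\prod_P p[P]\times\prod_Q q[Q]$ to the element of $\Gamma_{p\tri q}$ whose $(P,f)$-component is $(\sigma(P),\,\tau(f(\sigma(P))))$; the colax unit map $1\to\Gamma_\yon$ is forced, since $\Gamma_\yon\cong 1$. Naturality in $p$ and $q$ is immediate from the formula.

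The main obstacle is the colax associator coherence in (d), which requires comparing the two descriptions of $\Gamma_{(p\tri q)\tri r}\cong\Gamma_{p\tri(q\tri r)}$ against $\Gamma_p\times\Gamma_q\times\Gamma_r$; no conceptual novelty is needed, only careful bookkeeping with the nested dependent products and sums. Observing that $\Gamma$ fails to be strong monoidal (the above map is not in general an isomorphism, as one can see by taking $p = q = \yon+\yon$) explains why only a colax structure is claimed.
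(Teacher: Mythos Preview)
The paper leaves this proposition as an exercise, so there is no proof in the paper to compare against. Your approach is correct and is the natural one: parts (a)--(c) are immediate from the Dirichlet formula \eqref{eqn.dirichlet}, and for (d) your explicit colax map is the right one.

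Two remarks on (d). First, a slicker packaging: since $\Gamma_p=\Poly(p,\yon)$, the colax structure map $\Gamma_p\times\Gamma_q\to\Gamma_{p\tri q}$ in $\Set$ is simply $(\sigma,\tau)\mapsto(\sigma\tri\tau)$ followed by the unitor $\yon\tri\yon\cong\yon$. The associator and unit coherences then fall out directly from the monoidal axioms for $(\Poly,\tri,\yon)$, sparing you the nested bookkeeping you anticipate.

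Second, your counterexample is wrong. For $p=q=\yon+\yon=2\yon$ one has $\Gamma_p=\Gamma_q=1$ (each position has a single direction), and $p\tri q\cong 4\yon$, so $\Gamma_{p\tri q}=1$ as well; the colax map is then trivially an isomorphism. A working counterexample is $p=2\yon$ and $q=\yon+\yon^2$: here $\Gamma_p\times\Gamma_q=1\times 2=2$, while $p\tri q\cong 2\yon+2\yon^2$ gives $\Gamma_{p\tri q}=1\cdot1\cdot2\cdot2=4$, so the colax map cannot be invertible.
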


\subsection{Linearly distributive categories (LDCs)}
\label{Sec: LDC prelim}

%\dnote{I'm stopping here for now. Please do another edit of the document to find weird indents, sentences starting with notation, ways to label and reference previously-defined things, ways to insert the display within the text at the indicated location, etc. Thank you Priyaa! Once done, I'll do another pass.}

Here we discuss various flavors of LDCs and the appropriate sorts of functors between them. 

\subsubsection{Flavors of LDCs}

A {\bf linearly distributive category (LDC)} is a category $\X$, equipped with two monoidal structures\footnote{The operation $a \tri b$ is usually written as $a \oa b$ in the literature.}
$$(\X, \ox, \top)~~~\mbox{ and } ~~~ (\X, \tri, \bot)$$
linked by natural transformations
\begin{equation}\label{eqn.lin_dist}
\begin{aligned}
&\partial^L_L: a \ox (b \tri c) \rightarrow  (a \ox b) \tri c \\
& \partial^R_R: (b \tri c) \ox a \rightarrow b \tri (c \ox a)
\end{aligned} 
\end{equation}
called  \emph{linear distributors},%
\footnote{The maps $\partial^L$ and $\partial^R$ are called \emph{linear}  distributors because they do not copy $a$ as in distributive categories: 
\[ A \times (B + C)  \cong (A \times B) + (A \times C) \]
In this article, we use the word \emph{linear} for two different but firmly-established notions. First, a polynomial is \emph{linear} if it has the form $A\yon$ for some set $A$. Second, the terminology of linear distributive categories often appends the term ``linear'' to refer to various notions in that context, e.g.\ \emph{linear duals}. Hopefully, this name-space collision will not cause confusion.
}
 such that the the associators and unitors of the $\ox$ and the $\tri$ products interact coherently with the linear distributors \cite{BCST96, CS97}.

\begin{example}
Every monoidal category is an LDC in which the two tensor products coincide. 
\end{example}
For more examples, see \cite[Chapter 2]{Sri21}.

A {\bf symmetric LDC} is an LDC in which both monoidal structures are symmetric,  i.e.\ having symmetry maps $\sigma_\ox$ and $\sigma_\tri$ such that the following diagram commutes:
\[ \begin{tikzcd}
(A \tri B) \ox C \ar[d, "\partial^R_R"'] \ar[r, "c_\ox" ]
	& C \ox (A \tri B) \ar[r, "c_\tri"]  
	& C \ox (B \tri A) \ar[d, "\partial^L_L"] \\ 
A \tri (B \ox C) 
	& A \tri (C \ox B)  \ar[l, "\sigma_\tri"]
	& A \tri (B \ox C) \ar[l, "\sigma_\ox"] 
\end{tikzcd}\]
In a symmetric LDC, the symmetry maps induce the following \emph{permuting} distributivity maps:
\begin{equation}\label{eqn.other_dists}
\begin{aligned}
     \partial^L_R &: a \otimes (b \tri c) \to b \tri (a \otimes c) \\ 
     \partial^R_L &: (b \tri c) \otimes a \to (b \otimes a) \tri c
\end{aligned}
\end{equation}
An LDC which is equipped with all the four linear distributivity maps is said to be {\bf non-planar} \cite{CS97}.

Motivated by $\Poly$, we consider LDCs in which only the tensor $\ox$ is symmetric. 
\begin{definition}
A {\bf $\otimes$-symmetric LDC} $(\X, \otimes, \top, \tri, \bot)$ is an LDC such that:
\begin{enumerate}[nosep]\item $(\X, \otimes, \top)$ is a symmetric monoidal category (\emph{parallel}  structure), and
\item $(\X, \tri, \bot)$ is a (not-necessarily-symmetric) monoidal category (\emph{sequential} structure).
\end{enumerate}
\end{definition}

\begin{lemma}\label{lemma.planar} 
All $\ox$-symmetric LDCs are non-planar.
\end{lemma}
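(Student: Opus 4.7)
The plan is to exhibit the two missing distributors $\partial^L_R$ and $\partial^R_L$ as composites built from the given data $\partial^L_L, \partial^R_R$ together with the parallel symmetry $\sigma_\otimes$, and then check that they are natural and satisfy the coherences for a non-planar LDC as spelled out in \cite{CS97}.

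First I would define the permuting distributors. For $\partial^L_R : a \otimes (b \tri c) \to b \tri (a \otimes c)$, I would set
\[
\partial^L_R \;\coloneqq\; a \otimes (b \tri c) \xrightarrow{\sigma_\otimes} (b \tri c) \otimes a \xrightarrow{\partial^R_R} b \tri (c \otimes a) \xrightarrow{\tri\, \sigma_\otimes} b \tri (a \otimes c).
\]
Symmetrically, for $\partial^R_L : (b \tri c) \otimes a \to (b \otimes a) \tri c$, I would set
\[
\partial^R_L \;\coloneqq\; (b \tri c) \otimes a \xrightarrow{\sigma_\otimes} a \otimes (b \tri c) \xrightarrow{\partial^L_L} (a \otimes b) \tri c \xrightarrow{\sigma_\otimes\tri} (b \otimes a) \tri c.
\]
Naturality of each composite follows immediately from naturality of $\partial^L_L$, $\partial^R_R$ and $\sigma_\otimes$, together with bifunctoriality of $\otimes$ and $\tri$.

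Next I would check the required coherences. There are essentially two families: (i) pentagons relating each $\partial^\bullet_\bullet$ to the associators of $\otimes$ and $\tri$, and (ii) unit triangles. For the pentagons involving $\partial^L_R$, I would paste the pentagon for $\partial^R_R$ and then conjugate each occurrence of $\otimes$ by $\sigma_\otimes$; the pentagon commutes because $\sigma_\otimes$ is compatible with the associator of the symmetric monoidal structure $(\X,\otimes,\top)$ (the hexagon axiom for $\sigma_\otimes$), and because $\partial^R_R$ already satisfies its own pentagon by assumption. The unit coherences reduce similarly to the unit triangle for $\partial^R_R$ once the left/right unitor for $\otimes$ is swapped via $\sigma_\otimes$. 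The argument for $\partial^R_L$ is entirely dual, using $\partial^L_L$ in place of $\partial^R_R$.

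The main obstacle I expect is bookkeeping rather than conceptual: there are several coherence diagrams for a non-planar LDC (see \cite{BCST96,CS97}), and for each one I will need to insert instances of $\sigma_\otimes$ in the correct places and invoke the symmetric monoidal coherence hexagon to commute them past associators. Once this is done systematically for one of the two new distributors, the other follows by the evident left/right symmetry of the argument.
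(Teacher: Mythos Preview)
Your approach is correct and essentially identical to the paper's: the paper's sketch defines $\partial^L_R$ as the composite $a \otimes (b \tri c) \xrightarrow{c_\otimes} (b \tri c)\otimes a \xrightarrow{\partial^R_R} b \tri (c\otimes a) \xrightarrow{b\tri c_\otimes} b \tri (a\otimes c)$ and says $\partial^R_L$ is constructed similarly. In fact you go further than the paper's sketch by outlining how the coherence diagrams are verified via the hexagon axiom for $\sigma_\otimes$, whereas the paper omits this entirely.
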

\begin{proof}[Sketch of proof]
In a $\otimes$-symmetric LDC, the symmetry of the tensor product $\otimes$ induces the two permuting distributivity maps as follows: 
\[ \partial^L_R := \left( a \otimes (b \tri c) \to^{c_\ox} (b \tri c) \ox a \to^{\delta^R_R} b \tri (c \ox a) \to^{b \tri c_\ox} b \tri (a \ox c) \right) \] 
The other permuting map $\delta^R_L$, is constructed similarly.
\end{proof}

An LDC satisfies the mix inference rule in linear logic if it has a map $\bot \to \top$ satisfying certain coherences:

\begin{definition}[\cite{CS97a}]
A {\bf mix LDC} is an LDC with a map $\m: \bot \to \top$, called the {\bf mix map}. The mix map induces, for any $a,b:\X$, a natural map $\indep_{a,b}: a \ox b \to a \tri b$, as shown as the dotted arrow in the following commutative diagram:
\begin{equation}
\label{eqn: mix coherence}
\begin{tikzcd}[column sep=50pt, ampersand replacement=\&]
a \ox b \ar[r, "\id \ox u_\tri^{L^{-1}}"] \ar[d, "(u_\tri^R)^{-1}\ox \id"'] \ar[ddrr, dotted, "\indep_{a,b}"] \& 
a \ox (\bot \tri b) \ar[r, "\id \ox (\m \tri \id)"] \& 
a \ox ( \top \tri b) \ar[d, "\partial^L"] \\
(a \tri \bot) \ox b \ar[d, "\partial^R"] \&  
\& 
( a \ox \top ) \tri b  \ar[d, "u_\ox^R \tri \id"'] \\
a \tri (\bot \ox b) \ar[r, "\id \tri (\m \ox \id)"'] \& 
a \tri (\top \ox b) \ar[r, "\id \tri u_\ox^L"'] \&  
a \tri b
\end{tikzcd}
\end{equation}
An {\bf isomix LDC} is a mix LDC in which the mix map $\top \to^{\m}_{\raisebox{1pt}{$\cong$}} \bot$ is an isomorphism. 

\end{definition}

When $\m: \bot \to \top$ is an isomorphism, the coherence requirement in eqn \eqref{eqn: mix coherence} is automatically satisfied (see \cite[Lemma 6.6]{CS97a}).

In the LDC literature, the $\indep$ map is usually referred to as the {\bf mixor} and is written as {\sf mx}. In this article, we use {\bf indep}, because of its semantics in $\Poly$; see below \eqref{eqn.indep_2}.

A {\bf compact LDC} is an isomix LDC in which for all $a,b$, the map $\indep_{a,b}: a \ox b \to^{\cong} a \tri b$ is an isomorphism. In fact, any compact LDC is linearly equivalent to a monoidal category \cite[Corollary 2.18]{Sri21}. The term `compact' in the context of LDCs refers to the fact that the two monoidal products are isomorphic via the $\indep$ map. 

A {\bf monoidal category} is a compact LDC in which for all $a,b$, the map $\indep_{a,b} = \id_{a \ox b}$ is the identity natural transformation. 

The following schematic diagram summarizes some relevant flavors of LDCs:
\[
	\begin{tikzpicture}[scale=1.8]
		\begin{pgfonlayer}{nodelayer}
			\node [style=circle, scale=2, color=black, fill=red] (0) at (-5.75, 2.75) {};
			\node [style=circle, scale=2, color=black, fill=red!70] (1) at (-3.5, 2.75) {};
			\node [style=circle, scale=2, color=black, fill=red!60] (2) at (-1, 2.75) {};
			\node [style=circle, scale=2, color=black, fill=red!40] (3) at (1.75, 2.75) {};
			\node [style=circle, scale=2, color=black, fill=red!20] (5) at (4, 2.75) {};
			\node [style=none] (4) at (-7.75, 2.75) {};
			\node [style=none] (6) at (6, 2.75) {};
			\node [style=none] (7) at (-5.75, 2) {LDC};
			\node [style=none] (8) at (-3.5, 4) {Mix LDC};
			\node [style=none] (9) at (-3.5, 3.5) {$\m: \bot \to \top$};
			\node [style=none] (10) at (-1, 2) {Isomix LDC};
			\node [style=none] (11) at (1.75, 4.25) {Compact LDC};
			\node [style=none] (12) at (1.75, 3.65) {$a \ox b \to^{\indep}_{\cong} a \tri b$};
			\node [style=none] (13) at (4, 2) {Monoidal category};
			\node [style=none] (14) at (-1, 1.4) {$\bot \to^{\m}_{\cong} \top$};
			\node [style=none] (15) at (4, 1.5) {$\ox = \tri$, $\top=\bot$};
			\node [style=none] (16) at (-5.75, 1.5) {$(\X, \ox, \top)$};
			\node [style=none] (17) at (-5.75, 1) {$(\X, \tri, \bot)$};
		\end{pgfonlayer}
		\begin{pgfonlayer}{edgelayer}
			\draw [dotted] (4.center) to (0);
			\draw (0) to (1);
			\draw (1) to (2);
			\draw (2) to (3);
			\draw (3) to (5);
			\draw [dotted] (5) to (6.center);
		\end{pgfonlayer}
	\end{tikzpicture}
\]
We will be focused mainly on the center dot: we will see that $\Poly$ is an isomix LDC.

\paragraph{Notation.} In this article, we write $\partial^L$ and $\partial^R$ for $\partial^L_L$ and $\partial^R_R$ respectively, and we will have very little use for the other two maps \eqref{eqn.other_dists}. For the rest of the paper, the term LDC will refer to non-symmetric LDCs unless otherwise specified.

\subsubsection{Functors between LDCs}

Linear functors \cite{CS99} are the most general notion of maps between LDCs. In this article we use a simpler notion: that of Frobenius functors \cite{DP08}.

\begin{definition}[{\cite[Lemma 2.20]{Sri21}}]
    Suppose $\X$ and $\Y$ are LDCs. A {\bf Frobenius functor} consists of a functor $F: \X \to \Y$ equipped with: 
    \begin{enumerate}[(i), nosep]
        \item a lax monoidal structure $(F, m_\ox, m_I): (\X, \ox, \top) \to (\Y, \otimes, \top)$  and
        \item a colax monoidal structure $(F, n_\tri, n_I): (\X, \tri, \bot) \to (\Y, \tri, \bot)$
    \end{enumerate}
   such that  the laxors $m_\ox$ and $n_\tri$ and the distributivity maps interact as follows:
    \[ {\bf [F.1]}\quad  \begin{tikzcd}[column sep=large]
        F(a) \otimes F(b \tri c) \ar[r, "\id \otimes n_\tri"] \ar[d, "m_\otimes"'] & 
        F(a) \otimes (F(b) \tri F(c)) \ar[d, "\delta^L"] \\ 
        F(a \otimes (b \tri c)) \ar[d, "F(\delta^L)"'] 
        & (F(a) \otimes F(b)) \tri F(c) \ar[d, "m_\otimes \tri \id"] \\ 
        F((a \ox b) \tri c) \ar[r, "n_\tri"'] & F(a \tri b) \tri F(c)
    \end{tikzcd} \] \[
    \quad {\bf [F.2]}\quad \begin{tikzcd}[column sep=large]
        F(a \tri b) \otimes F(c) \ar[r, "n_\tri \otimes  \id"]  \ar[d, "m_\otimes"']  &
        (F(a) \tri F(b)) \otimes F(c) \ar[d, "\delta^R"] \\ 
        F((a \tri b) \ox c) \ar[d, "F(\delta^R)"'] & F(a) \tri (F(b) \otimes F(c)) \ar[d, "\id \tri m_\ox"] \\ 
        F(a \tri (b \otimes c)) \ar[r, "n_\tri"'] & F(a) \tri F(b \otimes c)
    \end{tikzcd} \]
\end{definition}

\begin{definition}[{\cite[Definition 2.22]{Sri21}}]
Suppose $\X$ and $\Y$ are mix LDCs. A {\bf mix functor} $F: \X \to \Y$ is a Frobenius functor making the following diagram commute: 
\[
\begin{tikzcd}
    F(\bot) \ar[r, "n_I"] \ar[d, "F(\m)"'] & 
    \bot \ar[d, "\m"] \\ 
    F(\top) \ar[r, <-, "m_I"'] & 
    \top 
\end{tikzcd}
\]
A mix functor $F$ is {\bf isomix} if $m_I: \top \to F(\top)$ and $n_I: F(\bot) \to \bot$ are isomorphisms. 
\end{definition}

\section{{\sf Poly} is an isomix LDC}
\label{Sec: duoidal}

In this section, we prove that $\Poly$ is an isomix LDC. This result follows from the general observation that there is a faithful functor from the category of normal duoidal categories and functors to that of (non-planar) isomix LDCs and isomix Frobenius functors.  

\begin{definition}[\cite{AMa10}]
    A {\bf duoidal category} is a category $\X$ with two monoidal structures $(\X, \ox, \top)$ and $(\X, \tri, \bot)$ along with a natural transformation:
    \[ \duo: ~(a \tri b) \ox (c \tri d) \to (a \ox c) \tri (b \ox d) \]
    called the {\bf interchange law}, and morphisms: 
        \begin{align*}
	    e_\top:& ~\top \to \top \tri \top \\ 
             e_\bot:& ~\bot \otimes \bot \to \bot
        \end{align*}
    such that the functors $\tri: \X \times \X \to \X$ and $\bot: \mathbbm{1} \to \X$ are $\ox$-lax monoidal, and the assosciativity and unitor natural isomorphisms of $(\tri, \bot)$ are $\ox$-monoidal natural transformations.     
    The above data generates a map $k: \top \to \bot$ as follows: 
	\[ \top \to^{\cong}  \top \ox \top \to^{\cong}  (\top \tri \bot) \ox (\bot \tri \top) \to^{\duo} (\top \ox \bot) \tri (\top \ox \bot) \to^{\cong} \bot \tri \bot \to^{\cong} \bot \]
	We say that a duoidal category is {\bf normal} if the above composite is an isomorphism.
\end{definition}

\begin{definition}[{\cite[Definition 6.50]{AMa10}}]
    A {\bf bilax duoidal functor} $D: (\X, \ox, \top, \tri, \bot) \to (\Y, \ox, \top, \tri, \bot)$ consists of a functor $F: \X \to \Y$ that is $\ox$-lax monoidal, $\tri$-colax monoidal, and such that the following diagrams commute:     
    \[
    {\bf [D.1]} \xymatrix{
        F((a \tri b) \otimes (c \tri d)) \ar[d]_{F(\duo)} &F(a \tri b) \otimes F(c \tri d) \ar[r]^{\hspace{-20pt}n_\tri \otimes n_\tri} \ar[l]_{m_\otimes} & (F(a) \tri F(b)) \otimes (F(c) \tri F(d)) \ar[d]^{\duo} \\ 
        F((a \otimes c) \tri (b \otimes d)) \ar[r]_{n_\tri} &F(a \otimes c) \tri F(b \otimes d) & (F(a) \otimes F(c)) \tri (F(b) \otimes F(d)) \ar[l]^{\hspace{-20pt}m_\otimes \tri m_\otimes} 
    }
    \]
    \[
    {\bf [D.2]} \xymatrix{
        \top \ar[r]^{m_I} \ar[d]_{e_\top} & 
        F(\top) \ar[r]^{F(e_\top)} & 
        F(\top \tri \top) \ar[d]^{n_\tri} \\
        \top \tri \top \ar[rr]_{m_I \tri m_I} & &
        F(\top) \tri F(\top) 
    } ~~~~~~~~
    {\bf [D.3]} \xymatrix{
        F(\bot) \otimes F(\bot) \ar[d]_{m_\otimes} \ar[rr]^{n_I \otimes n_I} &  & \bot \otimes \bot \ar[d]^{e_\bot} \\
        F(\bot \otimes \bot) \ar[r]_{F(e_\bot)} & F(\bot) \ar[r]_{n_I} &  \bot 
    }
    \]
    \[ {\bf [D.4]}~~ 
    \begin{tikzcd}
     \top \ar[r, "m_I"] \ar[d, "k"' ] & F(\top) \ar[d, "F(k)"] \\
     \bot \ar[r, <- , "n_I"'] & F(\bot)  
    \end{tikzcd}
    \]

A bilax duoidal functor $F$ between normal duoidal categories is {\bf normal} if the unit laxors are isomorphisms:
\[
m_I: \top \to^{\cong} F(\top) ~~~~ \text{ and } ~~~~~ n_I: F(\bot) \to^{\cong} \bot
\]

\end{definition}

\begin{figure}
\thisfloatpagestyle{empty}
\centering
\rotatebox{90}{ 
\centering\footnotesize
\begin{tikzcd}[column sep=small, ampersand replacement=\&]
     \&   
     \& 
     \&
    F(a) \ox F(c \tri d) \ar[llldd, "m_\ox"'] \ar[rrrdd, "\id \ox n_\tri"] \ar[dd, "F(u_\tri^{-1}) \ox  \id", "\cong"'] \& 
     \&
     \&
     \\ %1
     \&   
     \& 
     \&
     \& 
     \&
     \&
     \\  %2
    F( a \ox ( c \tri d)) \ar[ddd, "F(\delta)"'] \ar[drr, "F(u_\tri^{-1} \ox \id)", "\cong"' ] \ar[rrr, phantom, "\text{=nat.}"] \&   
     \& 
     \&
    F(a \tri \bot) \ox (F(c \tri d)) \ar[dl, "m_\ox"'] \ar[dr, "n_\tri \ox n_\tri" ] \& 
     \&
     \&
    F(a) \ox (F(c) \tri F(d)) \ar[ddd, "\delta^L"] \ar[dl, "F(u_\ox^{-1}) \ox \id"'] 
    \\ %3
     \&   
     \& 
    F((a \tri \bot)\ox(c \tri d)) \ar[d, "F(\duo)"'] \&
     \ar[d, phantom, "\text{=(D.1)}", description]  \& 
    (F(a) \tri F(\bot)) \ox (F(c) \tri F(d)) \ar[d, "\duo"] \ar[r, "(\id \tri n_I) \ox \id"] \ar[dr, phantom, "\text{=nat.}"] \&
    (F(a) \tri \bot) \ox (F(c) \tri F(d)) \ar[d, "\duo"] \&
     \\ %4
     \ar[rr, phantom, ":=", description] \&   
      \& 
    F((a \ox c) \tri (\bot \ox d)) \ar[d, "F(\id \tri (k^{-1} \ox \id))"'] \ar[dr, "n_\tri"]  
      \&
    \textcolor{white}{.} \& 
    (F(a) \ox F(c) \tri (F(\bot) \ox F(d)) \ar[ld, "m_\ox \tri m_\ox"'] \ar[r, "\id \tri (n_I \ox \id)"] \ar[d, "\id \tri (F(k^{-1}) \otimes \id"'] \ar[dr, phantom, "\text{=(D.1)}", description] \&
    (F(a) \ox F(c)) \tri (\bot \ox F(d)) \ar[d, "\id \tri k^{-1} \otimes \id"] \ar[r, phantom, "\text{=:}"] \&
     \textcolor{white}{.} \\ %5
    F((a \ox c) \tri d) \ar[ddrrr, "n_\tri"'] \ar[rr, "F(\id \tri u_\ox^{-1})"'] \&   
     \& 
    F((a \ox c) \tri (\top \ox d)) \ar[dr, "n_\tri"']  \&
    F(a \ox c) \tri F(\top \ox d) \ar[d, "\id \tri F(k \ox \id)"]  \&
    F(a) \ox F(c) \tri F(\top) \ox F(d) \ar[ld, "m_\ox \tri m_\ox"] \& 
    F(a) \ox F(c) \tri \top \ox F(d) \ar[l, "\id \tri m_I \ox \id"] \ar[r, "F(\id \tri u_\ox)"] \&
    (F(a) \ox F(c)) \tri F(d)  \ar[ddlll, "n_\tri"] \\ 
     \&   
     \& 
     \&
    F(a \ox c) \tri F(\top \ox d) \ar[d, "\id \tri F(u_\ox)"] \& 
     \&
     \&
     \\ 
     \&   
     \& 
     \&
    F(a \ox c) \tri F(d) \& 
     \&
     \&
\end{tikzcd}
}
\caption{Normal duoidal functor is also isomix Frobenius}
\label{Fig: commuting diagram}
\end{figure}
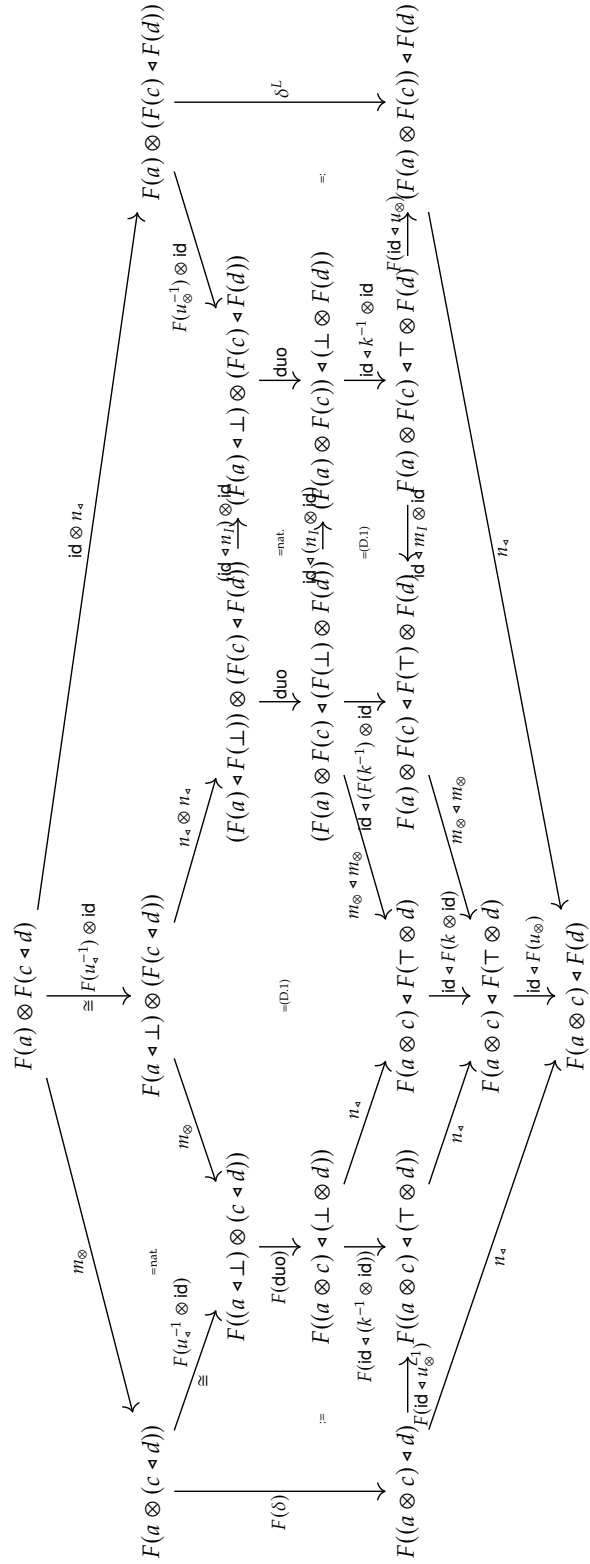

%%%%%%%%%%%%%%%%%%%%%%%%%%%%%%%%%%%%%%%%%%%%%%%%%%%%%%%%%%
Let $\nD$ be the category of normal duoidal categories and bilax normal duoidal functors. Let $\Iso$ be the category of isomix categories and isomix functors. 

\begin{lemma}\label{lemma.normalDuo_Iso}
    There is a faithful functor $H: \nD \to \Iso$.
\end{lemma}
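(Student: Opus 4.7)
The plan is to define $H$ on objects and morphisms, then verify functoriality and faithfulness.

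On objects, given a normal duoidal category $(\X, \otimes, \top, \tri, \bot, \duo)$, I take the mix map to be $\m := k^{-1} : \bot \xrightarrow{\cong} \top$, which is invertible by normality. The linear distributors are built from the interchange law:
$$\partial^L \;:=\; \Bigl(a \otimes (b \tri c) \xrightarrow{u_\tri^{-1} \otimes \id} (a \tri \bot) \otimes (b \tri c) \xrightarrow{\duo} (a \otimes b) \tri (\bot \otimes c) \xrightarrow{\id \tri (\m \otimes \id)} (a \otimes b) \tri (\top \otimes c) \xrightarrow{\id \tri u_\otimes} (a \otimes b) \tri c\Bigr),$$
and $\partial^R$ is defined symmetrically. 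Verifying the LDC pentagons and triangles for these distributors reduces, after unfolding, to the duoidal axioms that $\tri$ is $\otimes$-lax monoidal and that its associator and unitors are $\otimes$-monoidal natural transformations. The mix coherence in \eqref{eqn: mix coherence} is automatic since $\m$ is an isomorphism \cite[Lemma 6.6]{CS97a}, so the resulting structure is an isomix LDC.

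On morphisms, a bilax normal duoidal functor $F$ already carries an $\otimes$-lax structure $(m_\otimes, m_I)$ and a $\tri$-colax structure $(n_\tri, n_I)$; these are precisely the data required for a Frobenius functor. The Frobenius axiom \textbf{[F.1]} is obtained by expanding $\partial^L$ on both sides and applying \textbf{[D.1]} to transport $\duo$ past $F$; the residual unitors are resolved by naturality, and the comparisons between $F(k)$ and $k$ are handled by \textbf{[D.4]}. The full pasting is laid out in \cref{Fig: commuting diagram}, and \textbf{[F.2]} is entirely analogous. Normality gives that $m_I$ and $n_I$ are isomorphisms, and the mix-compatibility square of an isomix functor reduces to \textbf{[D.4]} after inverting $k$ and $n_I$, so $F$ is isomix. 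Functoriality of $H$ is then immediate, since composition of the lax and colax structures is preserved on the nose.

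For faithfulness, note that $H$ leaves the underlying functor and all laxors $(m_\otimes, m_I, n_\tri, n_I)$ unchanged; hence if $H(F) = H(G)$, then $F$ and $G$ agree on every piece of bilax normal duoidal data, so $F = G$. The principal obstacle is the bookkeeping for the LDC pentagons and triangles of the constructed distributors: each unfolds into a chase over $\duo$, unitors, and $k^{\pm 1}$, and the work lies in identifying which duoidal coherence to invoke at each step. Once this is organized, the promotion of bilax normal duoidal functors to isomix Frobenius functors follows the template in \cref{Fig: commuting diagram}.
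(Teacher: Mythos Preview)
Your proposal is correct and follows essentially the same approach as the paper: you define $\m = k^{-1}$, build the linear distributors from $\duo$ by inserting and removing units via $k^{\pm 1}$ exactly as in \eqref{eqn: left distributor}, and promote bilax normal duoidal functors to isomix Frobenius functors via \textbf{[D.1]} and \textbf{[D.4]} as in \cref{Fig: commuting diagram}. Your explicit treatment of faithfulness (that $H$ leaves the underlying functor and laxors unchanged) is a small addition that the paper leaves implicit in the assignment $H(F) := F$.
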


\begin{proof}[Sketch of proof]
Suppose $(\X, \ox, \top, \tri, \bot)$ is a normal duoidal category. We show that it can be given an isomix LDC structure by defining the linear distributivity and the mix maps. The linear distributivity maps are constructed from the duoidal map and the isomorphism $\top \to^{k}_{\cong} \bot$ by introducing and eliminating monoidal units. The construction of the left distributor $\partial^L$ is shown in eqn \eqref{eqn: left distributor}.
\begin{equation}
\label{eqn: left distributor}
 \begin{tikzcd}[column sep = large]
a \ox (b \tri c) 
 	\ar[rr, "{u_\tri^{-1} \ox \id}", "\cong"' ]
	\ar[d, "\partial^L"']
	\ar[drr, phantom, "\partial^L:="]
 && (a \tri \bot) \ox (b \tri c)
 	\ar[d, "\duo" ]
\\ 
 (a \ox b) \tri c 
& (\bot \ox b) \tri (a \ox c)
	\ar[l, "\id \tri u_\ox"', "\cong" ]
& (a \ox b) \tri (\top \ox c)	
	\ar[l, "k^{-1}"', "\cong" ]
\end{tikzcd} 
\end{equation}
The {\bf mix map} $\m: \bot \to \top$ is defined to be $k^{-1}$. The right distributivity map $\partial^R$ and the two permuting distributivity maps $\partial^L_R$, $\partial^R_L$ are constructed similarly. 
%The $\indep$, is given by the duoidal map an the unitors:
%
%\begin{align*}
% {\sf indep} := A \otimes B \to^{u_\tri^{-1} \otimes u_\tri^{-1}} (A \tri I) \otimes (I \tri B) \to^{\duo} (A \otimes I) \tri (I \otimes B) \to^{u_\otimes \tri u_\otimes} A \tri B
%\end{align*}

Suppose $\X$ and $\Y$ are normal duoidal categories, and $F: \X \to \Y$ is a normal duoidal functor. Then,  $H(F) := F$ is a Frobenius functor, see \cref{Fig: commuting diagram} for the proof that [F.1] holds. In \cref{Fig: commuting diagram}, the inner squares shown without label commute because $F$ is a lax monoidal functor or a co(lax)monoidal functor. Similary, it can be proven that [F.2] holds. 

The normal duoidal functor $F$ is also an isomix functor because $\top \to^{k} \bot$ is an isomorphism, and {\bf [D.4]} holds. 
\end{proof}

The category $\Poly$ of polynomial functors and natural transformations is normal duoidal:

\begin{lemma}[\cite{Spi22}] 
\label{lemma.poly_normal_duoidal}
The category $({\sf Poly}, \otimes, \tri, \yon)$ is normal duoidal.
\end{lemma}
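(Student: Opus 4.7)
The plan is to verify the three pieces of data required for a duoidal structure on $(\Poly, \otimes, \tri, \yon)$ and then observe that the normality condition is essentially automatic.

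The crucial starting observation is that both monoidal products here share the same unit: the identity functor $\yon = \Set(1,-)$. Indeed, using the formulas in \eqref{eqn.dirichlet}, one checks $\yon \otimes p \cong p$ and $\yon \tri p \cong p \tri \yon \cong p$ for any polynomial $p$. Consequently, the would-be canonical map $k: \top \to \bot$ is a map $\yon \to \yon$ built entirely from unitors, hence equal to $\id_\yon$. This makes normality trivial as soon as we exhibit a valid duoidal structure. The first step of the plan is thus to recall that $(\Poly, \otimes, \yon)$ is a symmetric monoidal category (being the Day convolution of $(\Set, \times, 1)$) and $(\Poly, \tri, \yon)$ is a (non-symmetric) monoidal category (being the restriction of functor composition to polynomial endofunctors of $\Set$).

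The main step is to construct the interchange natural transformation $\duo: (a \tri b) \ox (c \tri d) \to (a \ox c) \tri (b \ox d)$ together with $e_\top: \yon \to \yon \tri \yon$ and $e_\bot: \yon \ox \yon \to \yon$. Using the succinct notation \eqref{eqn.succinct_poly} together with \eqref{eqn.poly_map}, a position of $a \tri b$ can be identified with a pair $(P, \beta)$ where $P:a(1)$ and $\beta: a[P] \to b(1)$, whose direction set is $\sum_{d:a[P]} b[\beta(d)]$. Thus a position of $(a \tri b) \ox (c \tri d)$ is a quadruple $((P,\beta),(R,\delta))$, and a position of $(a \ox c) \tri (b \ox d)$ is a pair $((P,R), \phi)$ with $\phi: a[P]\times c[R] \to b(1)\times d(1)$. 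I would define $\duo$ on positions by $((P,\beta),(R,\delta)) \mapsto ((P,R),\, (x,y)\mapsto(\beta(x),\delta(y)))$; the accompanying direction function goes the other way via the canonical distributivity $(\sum_x B_x)\times(\sum_y D_y) \cong \sum_{(x,y)} B_x \times D_y$ in $\Set$. The maps $e_\top$ and $e_\bot$ are the canonical isomorphisms $\yon \cong \yon\tri\yon$ and $\yon \ox \yon \cong \yon$, both of which exist because $1\times 1 \cong 1$.

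The remaining step is to verify the duoidal axioms: that $\tri: \Poly \times \Poly \to \Poly$ and $\bot = \yon : \mathbbm{1} \to \Poly$ are $\ox$-lax monoidal, and that the associator and unitors of $(\tri, \yon)$ are $\ox$-monoidal natural transformations. I expect this to be the main routine obstacle: each axiom unwinds to a diagram of set functions which commutes by the distributivity of $\times$ over $\sum$ in $\Set$ and by associativity of products of functions. None of the checks is conceptually subtle, but they are combinatorially heavy, so in the paper I would either cite \cite{Spi22} for the explicit verification or sketch the argument schematically by noting that the strong monoidal functors of \cref{prop.lin_rep_strong_monoidal} transport the required coherences from $(\Set, \times, 1)$, which is itself (trivially) duoidal. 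Finally, normality follows by the observation above: $k = \id_\yon$ is an isomorphism.
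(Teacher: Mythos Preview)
Your proposal is correct, but it follows a different route from the paper. The paper constructs the interchange map $\duo$ abstractly via the universal property of Day convolution: since $\otimes$ is the Day convolution of $\times$ on $\Set$, to give $(p_1\tri p_2)\otimes(q_1\tri q_2)\to(p_1\otimes q_1)\tri(p_2\otimes q_2)$ it suffices to give a map $(p_1\tri p_2)(A)\times(q_1\tri q_2)(B)\to\big((p_1\otimes q_1)\tri(p_2\otimes q_2)\big)(A\times B)$ natural in $A,B$, and this is built by applying the Day map twice. Your approach instead writes down $\duo$ explicitly in terms of positions and directions using the formula \eqref{eqn.poly_map}. Your construction is exactly what the paper's abstract map unwinds to, so the two agree; the Day-convolution argument has the advantage that naturality and the coherence axioms come essentially for free from the universal property, whereas your approach makes the map concrete but leaves more bookkeeping to verify by hand. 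One caution: your suggestion to transport the coherences via the functors of \cref{prop.lin_rep_strong_monoidal} does not quite work, since those functors only land in linear or representable polynomials and so cannot witness the axioms for arbitrary $a,b,c,d$; if you go the explicit route you really do need to check the diagrams directly (or, as you say, cite \cite{Spi22}).
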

\begin{proof} [Sketch of proof] 
For any symmetric monoidal structure $\cdot$ on $\Set$, there is a symmetric monoidal structure $\odot$ on {\sf Poly} which is the Day convolution of $\cdot$. The Cartesian product on {\sf Set} induces the $\ox$ structure on {\sf Poly} given in \cref{eqn.dirichlet}. 
Thus, for $p,q: \Poly$, and  $A,B: \Set$:
\[ p(A) \times q(B) \to^{\sf Day} (p \otimes q) (A \times B) \]
natural in $A,B : \Set$, where $p(A) := p \tri A$ denotes functor application. The above map satisfies the following universal property: for all functors $r\colon\Set\to\Set$ and functions $\phi: p(A) \times q(B) \to r(A \times B)$, there exists a unique (dotted) map which makes the following diagram commute: 
\begin{equation}
\label{eq.dayUniversalProp}
\xymatrix{
p(A) \times q(B) \ar[r]^{\sf Day} \ar[dr]_{\phi} & (p \otimes q) (A \times B) \ar@{.>}[d] \\
& r(A \times B)
}
\end{equation}
With this data, one can prove that $({\sf Poly}, \otimes, y, \tri, y)$ is normal duoidal. Indeed, to give the duoidal map $d$ for $p_1, p_2, q_1, q_2: {\sf Poly}$,
\[ d: (p_1 \tri p_2) \otimes (q_1 \tri q_2) \to (p_1 \otimes q_1) \tri (p_2 \otimes q_2) \]
it suffices to give, naturally for all $A, B: {\sf Set}$, a map of the form
\[ d_{A,B}: \big((p_1 \tri p_2) \otimes (q_1 \tri q_2)\big) (A \times B) \to (p_1 \otimes q_1) \tri (p_2 \otimes q_2) (A \times B) \]
And by the universal property of the Day convolution product, in order to give $d_{A,B}$, it is enough to provide a map of the form
\[ \phi:  (p_1 \tri p_2)(A) \times (q_1 \tri q_2)(B) \to (p_1 \otimes q_1) \tri (p_2 \otimes q_2) (A \times B) \]
which can be constructed as follows:
\begin{align*}
(p_1 \tri p_2)(A) \times (q_1 \tri q_2)(B)  
&= (p_1 \tri (p_2 (A)))\times (q_1 \tri (q_2(B))) \\  
&\to^{\sf Day} (p_1 \otimes q_1) (p_2(A) \times q_2(B)) \\ 
&\to^{\id\tri {\sf Day}} (p_1 \otimes q_1)\big((p_2 \otimes q_2)(A \times B)\big) \\ 
&= (p_1 \otimes q_1) \tri (p_2 \otimes q_2) (A \times B) 
\end{align*}
Then, one can define the duoidal map as the unique map given by the universal property of the Day convolution product as in \cref{eq.dayUniversalProp}.
\end{proof}

\begin{corollary}
\label{cor: Poly is isomix}
The category $({\sf Poly}, \otimes, \tri, \yon)$ is an isomix non-planar linearly distributive category. 
\end{corollary}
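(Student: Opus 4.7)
The plan is to simply combine the three preceding results. First, by \cref{lemma.poly_normal_duoidal}, the category $(\Poly, \otimes, \yon, \tri, \yon)$ is a normal duoidal category. Applying the faithful functor $H: \nD \to \Iso$ from \cref{lemma.normalDuo_Iso} then immediately yields that $\Poly$ carries the structure of an isomix LDC, with the linear distributors $\partial^L, \partial^R$ built from the interchange map $\duo$ and the isomorphism $k: \yon \to \yon$ as in \eqref{eqn: left distributor}, and with mix map $\m := k^{-1}$.

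It remains to verify non-planarity, i.e.\ the existence of the two permuting distributors $\partial^L_R$ and $\partial^R_L$ of \eqref{eqn.other_dists}. For this I would invoke \cref{lemma.planar}: it suffices to show that $(\Poly, \otimes, \yon)$ is a symmetric monoidal category, since the sequential structure $(\Poly, \tri, \yon)$ need not be symmetric. But the Dirichlet product $\otimes$ is the Day convolution of the (symmetric) Cartesian product on $\Set$, and Day convolution of a symmetric monoidal structure is symmetric monoidal; alternatively, this is transparent from the explicit formula in \eqref{eqn.dirichlet}, where the symmetry $p \otimes q \cong q \otimes p$ is induced by the swap $p(1) \times q(1) \cong q(1) \times p(1)$ and the swap $p[P] \times q[Q] \cong q[Q] \times p[P]$ in the exponent. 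Hence $\Poly$ is a $\otimes$-symmetric LDC, and so non-planar by \cref{lemma.planar}.

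No step here should present real difficulty: the content is entirely packaged in the two preceding lemmas and in \cref{lemma.planar}. The only thing worth being careful about is to note explicitly that $\yon$ serves as \emph{both} the $\otimes$-unit $\top$ and the $\tri$-unit $\bot$ in $\Poly$, so that the map $k: \top \to \bot$ whose invertibility defines normality is literally an endomorphism of $\yon$; by normality it is an iso, which is exactly what one needs to identify $\m$ as an isomorphism and conclude that $\Poly$ is \emph{iso}mix rather than merely mix.
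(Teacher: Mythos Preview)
Your proposal is correct and follows essentially the same approach as the paper: invoke \cref{lemma.poly_normal_duoidal} and \cref{lemma.normalDuo_Iso} to obtain the isomix LDC structure, then use $\otimes$-symmetry together with \cref{lemma.planar} for non-planarity. The additional detail you supply about why the Dirichlet product is symmetric and about $\yon$ serving as both units is accurate elaboration, but the paper's proof is content to leave these points implicit.
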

\begin{proof}
By \cref{lemma.poly_normal_duoidal,lemma.normalDuo_Iso}, the category $\Poly$ is isomix LDC. By \cref{lemma.planar}, it is non-planar because it is $\ox$-symmetric.
\end{proof}

Now that we know that $\Poly$ is an isomix LDC, let us examine its mix and $\indep$ maps. In $\Poly$, the units of the $\ox$- and the $\tri$-products coincide: $\bot = \top = \yon$. Hence, the mix map $\m$ can be taken to be the identity map. For any two polynomials $p$ and $q$, the $\indep$ map $p \ox q \to^{\indep} p \tri q$ is defined as follows:
\begin{align}
\label{eq.indep}
\sum_{P:p(1)} \sum_{Q:q(1)} \prod_{d:p[P]} \prod_{b: q[Q]} \yon &\to^{\indep} \nonumber 
\sum_{P:p(1)} \prod_{d:p[P]} \sum_{Q:q(1)} \prod_{b: q[Q]} \yon \\ 
 \indep_1: (P,Q) &\mapsto (P, \textit{const} Q: p[P] \to q(1); d \mapsto Q)   \\  
\indep_{P,Q}^\sharp:  (d,b) &\mapsto (d,b): p[P] \times q[Q] \label{eqn.indep_2}
\end{align}
The fact that, under this map, $Q$ arrives in the codomain as a constant, i.e.\ that it is \emph{independent} of $d$, is the reason for the name $\indep$. 
In passing, we note that for each $P:p(1)$ and $Q:q(1)$, the function $\indep_{P,Q}^\sharp$ is a bijection, and hence we record the following.
\begin{corollary}\label{cor.indep_cartesian}
For any $p,q:\Poly$, the map $\indep_{p,q}$ is cartesian.
\end{corollary}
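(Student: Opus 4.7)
The plan is to read the conclusion directly off the explicit description of $\indep$ just provided in \cref{eq.indep,eqn.indep_2}. By the definition recalled in the preliminaries, a polynomial map $\varphi: r \to s$ is cartesian when, for each position $P$ of $r$, the backward direction map $\varphi^\sharp_P$ is a bijection; so I only need to inspect $\indep^\sharp_{(P,Q)}$ at each position $(P,Q)$ of $p \ox q$.

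First I would identify the two direction sets involved. On the domain side, \cref{eqn.dirichlet} gives $(p \ox q)[(P,Q)] = p[P] \times q[Q]$. On the codomain side, the image position under $\indep_1$ is $(P, \mathrm{const}_Q)$, and its direction set as a position of $p \tri q$ is $\sum_{d:p[P]} q[\mathrm{const}_Q(d)] = \sum_{d:p[P]} q[Q] = p[P] \times q[Q]$, since $\mathrm{const}_Q$ is constant at $Q$. Under this canonical identification of both source and target with $p[P] \times q[Q]$, the rule $(d,b) \mapsto (d,b)$ from \cref{eqn.indep_2} exhibits $\indep^\sharp_{(P,Q)}$ as the identity function, which is a bijection.

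There is no real obstacle to overcome here: the only point worth pausing over is the recognition that the two apparently different expressions for the direction set on either side of $\indep^\sharp_{(P,Q)}$ really do give the same set, after which cartesianness is immediate.
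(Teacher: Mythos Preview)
Your proof is correct and follows exactly the paper's approach: the paper simply notes in passing, immediately before stating the corollary, that each $\indep_{P,Q}^\sharp$ is a bijection, and records the result without further argument. Your write-up is in fact more explicit than the paper's, since you spell out why the direction sets at $(P,Q)$ and at $(P,\mathrm{const}_Q)$ are both canonically $p[P]\times q[Q]$.
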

%%%%%%%%%%%%%%%%%%%%%%%%%%%%%%%%%%%%%%%%%%%%%%%%%%%%%%%%%%

\section{Linear duals}
\label{Sec: Duals}

In this section, we introduce biclosed LDCs and characterize the dual objects in these categories. The category $\Poly$ is a biclosed LDC. We apply the general results on biclosed LDCs to identify the dual objects in $\Poly$.

\subsection{Linear duals in LDCs}

Let us recall the definition of dual objects in LDCs.

\begin{definition}\cite[Def. 2.8.]{Sri21}\label{def.duals}
     Suppose $\X$ is a LDC and $a,b : \X$ are objects. Then $b$ is {\bf left dual} to $a$---and $a$ is {\bf right dual} to $b$---written as \[ (\eta, \epsilon): b \dual a \] if there exists a unit map $\eta: \top \to a \tri b$, and a counit map $\epsilon: b \otimes a \to \bot$ such that the following diagrams commute:
    \begin{equation}\label{eqn.linduals}
	\textbf{[dual.1]}
    \begin{tikzcd}
        a \ar[r, "u_\ox^{-1}"] \ar[d, equals]
        &[-5pt] \top \ox a  \ar[r,"\eta \ox \id"]
        & (a \tri b) \ox a \ar[d, "\delta^R"]
        \\ 
        a \ar[r, <-,  "u_\tri"']
        & a \tri \bot \ar[r, <-,  "1 \tri \epsilon"']
        & a \tri (b \ox a)
    \end{tikzcd}
	\textbf{[dual.2]}
    \begin{tikzcd}
        b \ar[r, "u_\ox^{-1}"] \ar[d, equals]
        &[-5pt] b \ox \top  \ar[r,"\id \ox \eta "]
        & b \ox (a \tri b) \ar[d, "\partial^L"]
        \\ 
        b \ar[r, <-,  "u_\tri"']
        & a \tri \bot \ar[r, <-,  "\epsilon \tri 1"']
        & (b \ox a) \tri b
    \end{tikzcd}
    \end{equation}
\end{definition}

Note that the above coherence conditions involve mainly $\eta$, $\epsilon$ and the linear distributivity maps. For easy understanding of the coherences, the $\eta$ and the $\epsilon$ can be drawn as a cap and a cup respectively (by hiding the units), even though they refer to different monoidal products. The two coherence diagrams {\bf [dual.1]} and {\bf[dual.2]} can then be drawn as snake diagrams, as shown below.\vspace{-.1in}
\[ \includegraphics[scale=0.09]{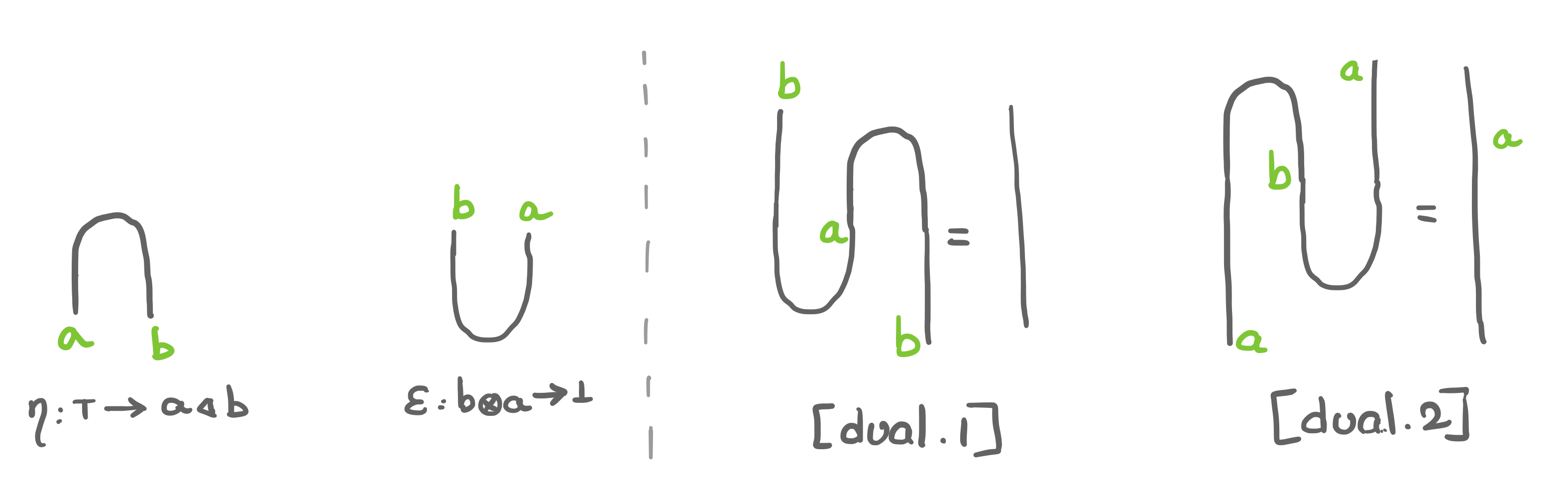}\]
\paragraph{Notation.} All string diagrams in this article are to be read top-to-bottom, i.e.\ in the direction of gravity.
\begin{definition}{\cite[Definition 2.10]{Sri21}}
			A morphism of linear duals, $(f,g): (\eta,  \epsilon) \to (\eta', \epsilon')$, is given by a pair of maps
			\[ \xymatrix{ b \ar[d]_f \ar@{-||}[r]^{(\eta,  \epsilon)}  &  a \\ b' \ar@{-||}[r]_{(\eta', \epsilon')} &  a' \ar[u]_{g}} \] 
		such that the following equations hold:  \[ (i) ~~~  \begin{tikzpicture}
			\begin{pgfonlayer}{nodelayer}
				\node [style=none] (0) at (-1, 2) {};
				\node [style=none] (1) at (0.5, 3) {};
				\node [style=none] (2) at (-1, 3) {};
				\node [style=none] (3) at (-0.25, 4) {$\eta'$};
				\node [style=none] (4) at (0.5, 2) {};
				\node [style=none] (5) at (-1.25, 2.25) {$b'$};
				\node [style=none] (6) at (1, 3.5) {$a'$};
				\node [style=circle, scale=1.5] (7) at (0.5, 2.75) {};
				\node [style=none] (8) at (0.5, 2.75) {$g$};
				\node [style=none] (9) at (1, 2.25) {$a$};
			\end{pgfonlayer}
			\begin{pgfonlayer}{edgelayer}
				\draw (4.center) to (7);
				\draw (1.center) to (7);
				\draw (2.center) to (0.center);
				\draw [bend left=90, looseness=1.75] (2.center) to (1.center);
			\end{pgfonlayer}
		\end{tikzpicture}
		= \begin{tikzpicture}
			\begin{pgfonlayer}{nodelayer}
				\node [style=none] (0) at (0.5, 2) {};
				\node [style=none] (1) at (-1, 3) {};
				\node [style=none] (2) at (0.5, 3) {};
				\node [style=none] (3) at (-0.25, 4) {$\eta$};
				\node [style=none] (4) at (-1, 2) {};
				\node [style=none] (5) at (0.75, 2.25) {$a$};
				\node [style=none] (6) at (-1.5, 3.5) {$b$};
				\node [style=circle, scale=1.5] (7) at (-1, 2.75) {};
				\node [style=none] (8) at (-1, 2.75) {$f$};
				\node [style=none] (9) at (-1.5, 2.25) {$b'$};
			\end{pgfonlayer}
			\begin{pgfonlayer}{edgelayer}
				\draw (4.center) to (7);
				\draw (1.center) to (7);
				\draw (2.center) to (0.center);
				\draw [bend right=90, looseness=1.75] (2.center) to (1.center);
			\end{pgfonlayer}
		\end{tikzpicture}
		  ~~~~~~(ii)~~~ \begin{tikzpicture}
			\begin{pgfonlayer}{nodelayer}
				\node [style=none] (0) at (0.5, 3.75) {};
				\node [style=none] (1) at (-1, 2.75) {};
				\node [style=none] (2) at (0.5, 2.75) {};
				\node [style=none] (3) at (-0.25, 1.75) {$\epsilon'$};
				\node [style=none] (4) at (-1, 3.75) {};
				\node [style=none] (5) at (0.75, 3.5) {$a'$};
				\node [style=none] (6) at (-1.5, 2.25) {$b'$};
				\node [style=circle, scale=1.5] (7) at (-1, 3) {};
				\node [style=none] (8) at (-1, 3) {$f$};
				\node [style=none] (9) at (-1.5, 3.5) {$b$};
			\end{pgfonlayer}
			\begin{pgfonlayer}{edgelayer}
				\draw (4.center) to (7);
				\draw (1.center) to (7);
				\draw (2.center) to (0.center);
				\draw [bend right=90, looseness=1.75] (1.center) to (2.center);
			\end{pgfonlayer}
		\end{tikzpicture}
		 =  \begin{tikzpicture}
			\begin{pgfonlayer}{nodelayer}
				\node [style=none] (0) at (-1, 3.75) {};
				\node [style=none] (1) at (0.5, 2.75) {};
				\node [style=none] (2) at (-1, 2.75) {};
				\node [style=none] (3) at (-0.25, 1.75) {$\epsilon$};
				\node [style=none] (4) at (0.5, 3.75) {};
				\node [style=none] (5) at (-1.25, 3.5) {$b$};
				\node [style=none] (6) at (1, 2.25) {$a$};
				\node [style=circle, scale=1.5] (7) at (0.5, 3) {};
				\node [style=none] (8) at (0.5, 3) {$g$};
				\node [style=none] (9) at (1, 3.5) {$a'$};
			\end{pgfonlayer}
			\begin{pgfonlayer}{edgelayer}
				\draw (4.center) to (7);
				\draw (1.center) to (7);
				\draw (2.center) to (0.center);
				\draw [bend right=90, looseness=1.75] (2.center) to (1.center);
			\end{pgfonlayer}
		\end{tikzpicture} \]
		\end{definition}
		
		Notice that a morphism of duals is determined completely by either of the pair of maps ($f$ 
		completely determines $g$, and vice versa), and are referred to as Australian mates \cite{CKS00}. :
        \[ g := \begin{tikzpicture}
			\begin{pgfonlayer}{nodelayer}
				\node [style=circle, scale=2] (0) at (0, 2) {};
				\node [style=none] (1) at (0, 2) {$f$};
				\node [style=none] (2) at (0, 2.5) {};
				\node [style=none] (3) at (1, 2.5) {};
				\node [style=none] (4) at (1, 0.75) {};
				\node [style=none] (5) at (0, 1.5) {};
				\node [style=none] (6) at (-1, 1.5) {};
				\node [style=none] (7) at (-1, 3.5) {};
				\node [style=none] (8) at (-1.25, 3) {$a'$};
				\node [style=none] (9) at (1.25, 1.25) {$a$};
			\end{pgfonlayer}
			\begin{pgfonlayer}{edgelayer}
				\draw (2.center) to (0);
				\draw (0) to (5.center);
				\draw [bend left=90, looseness=1.75] (5.center) to (6.center);
				\draw (6.center) to (7.center);
				\draw [bend left=90, looseness=2.00] (2.center) to (3.center);
				\draw (3.center) to (4.center);
			\end{pgfonlayer}
		\end{tikzpicture}
		\]
		
		The map $f$ is an isomorphism if and only if $g$ is an isomorphism. Also, notice that 
		if $f$ or $g$ is an isomorphism, equations $(i)$ and $(ii)$ imply one another in the 
		definition of morphism of duals. 
\begin{lemma}
\label{Lemma: eta-epsilon}
In a mix LDC, we have the following equations: 
\[ \includegraphics[scale=0.07]{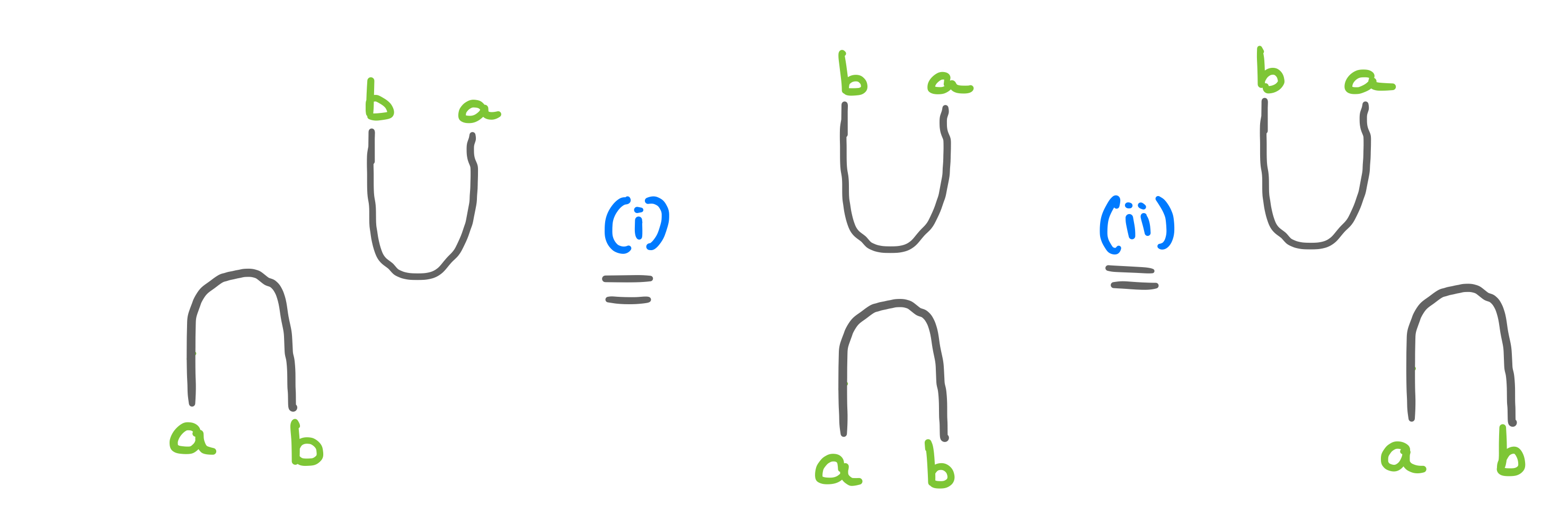}\]
Equivalently, the following squares commute: 
\[ (i) ~~~ \begin{tikzcd} 
    b \ox a \ar[r, "\epsilon"] \ar[d, "u_\ox^{-1}"'] 
    & \bot \ar[r, "\m"]
    & \top \ar[r, "\eta"]
    & a \tri b \ar[dd, equals]
    \\
    \top \ox (b \ox a) \ar[d, "\eta \ox \id"']
    & 
    &
    &
    \\
    (a \tri b) \ox (b \ox a) \ar[r, "\id \ox \epsilon"']
    & (a \tri b) \ox \bot \ar[r, "\id \ox \m"']
    & (a \tri b) \ox \top \ar[r, "u_\ox"']
    & a \tri b
\end{tikzcd} \]
\[ (ii) ~~~ \begin{tikzcd}    
    b \ox a \ar[r, "\epsilon"] \ar[d, "u_\ox^{-1}"'] 
    & \bot \ar[r, "\m"]
    & \top \ar[r, "\eta"]
    & a \tri b \ar[dd, equals]
    \\
    (b \ox a) \ox \top \ar[d, "\id \ox \eta"']
    & 
    &
    &
    \\
    (b \ox a) \ox (a \tri b) \ar[r, "\epsilon \ox \id"']
    & \bot \ox (a \tri b) \ar[r,  "\m \ox \id"']
    & \top \ox (a \tri b)  \ar[r, "u_\ox"']
    & a \tri b
\end{tikzcd} 
\]
\end{lemma}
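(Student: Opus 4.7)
My plan is to prove both equations by a short diagram chase using only bifunctoriality of $\ox$ and naturality of its unitors. Although the lemma is stated in a mix LDC with duals, neither the snake identities \eqref{eqn.linduals} nor the mix coherence \eqref{eqn: mix coherence} are actually invoked; equations $(i)$ and $(ii)$ express purely structural facts about how an arbitrary map $\eta\colon\top\to a\tri b$, an arbitrary map $\epsilon\colon b\ox a\to\bot$, and the mix map $\m$ compose through the $\ox$-unitors.

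For equation $(i)$, the idea is to start from the outer composite of the square and show that it collapses to $\eta\circ\m\circ\epsilon$. First, by bifunctoriality of $\ox$, rewrite
\[
(\id\ox\m)\circ(\id\ox\epsilon)\circ(\eta\ox\id)\;=\;(\eta\ox\id_\top)\circ\bigl(\id_\top\ox(\m\circ\epsilon)\bigr)
\]
as parallel maps $\top\ox(b\ox a)\to(a\tri b)\ox\top$. Then naturality of the right unitor in its first argument gives $u_\ox^R\circ(\eta\ox\id_\top)=\eta\circ u_\ox^R$, and Mac Lane's coherence identifies $u_\ox^L=u_\ox^R$ on $\top\ox\top$. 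Finally, naturality of the left unitor yields $u_\ox^L\circ(\id_\top\ox(\m\circ\epsilon))\circ u_\ox^{L,-1}=\m\circ\epsilon$ on $b\ox a$. Splicing these simplifications together collapses the whole outer path to $\eta\circ\m\circ\epsilon$, which is the inner path of the square.

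Equation $(ii)$ is proved by the mirror-image argument with the roles of the left and right unitors interchanged: bifunctoriality lets one commute $\id\ox\eta$ past $\epsilon\ox\id$ and $\m\ox\id$, and the same naturality plus Mac Lane coherence then collapses the composite to $\eta\circ\m\circ\epsilon$. The only mild bookkeeping issue is to distinguish consistently between the left and right unitors at each step; beyond that I do not foresee any substantive obstacle.
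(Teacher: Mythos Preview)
Your proposal is correct and follows essentially the same approach as the paper: both arguments use only bifunctoriality of $\ox$ and naturality of the unitors, organized by the paper as a $2\times 3$ grid of naturality/bifunctoriality squares and by you as an equational chain. Your explicit remark that neither the snake identities nor the mix coherence are needed is accurate and matches the paper's proof, which likewise never invokes them.
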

\begin{proof}
We prove below that $(i)$ commutes. The proof that $(ii)$ commutes follows similarly. 
\[
\begin{tikzcd}[column sep=large]
    b \ox a \ar[r, "\epsilon"] \ar[d, "u_\ox^{-1}"'] \ar[dr, phantom, "\text{=nat.}"]
  & \bot \ar[r, "\m"] \ar[d, "u_\ox^{-1}"] \ar[dr, phantom, "\text{=nat.}"]
  & \top  \ar[r, "\eta"] \ar[d, "u_\ox^{-1}"] \ar[dr, phantom, "\text{=inv.}"]
  &  a \tri b  \ar[dd, bend left, equals]
  \\ 
     \top \ox (b \ox a) \ar[r, "\id \ox \epsilon"] \ar[d, "\eta \ox \id"'] \ar[dr, phantom, "=\ox\text{-bifunctor}"]
  & \top \ox \bot \ar[r, "\id \ox \m"]  \ar[d, "\eta \ox \id"] \ar[dr, phantom, "=\ox\text{-bifunctor}"]
  & \top \ox \top  \ar[r, "u_\ox"] \ar[d, "\eta \ox \id"] \ar[dr, phantom, "\text{=nat.}"]
  & \top \ar[d, "\eta"]  
  \\ 
    (a \tri b) \ox (b \ox a) \ar[r, "\id \ox \epsilon"'] 
  & (a \tri b) \ox \bot \ar[r, "\id \ox \m"']  
  & (a \tri b) \ox \top  \ar[r, "u_\ox"'] 
  & (a \tri b) 
\end{tikzcd}
\qedhere
\]
\end{proof}

\subsection{Duals in biclosed LDCs}
\label{Sec: Biclosed}

In this section, we discuss what happens when the parallel structure ($\otimes$) has a left closure and the sequential structure ($\tri$) has a right coclosure. These are related to dual objects in LDCs. 

\begin{definition} We define a linearly distributive category $(\X, \ox, \top, \tri, \bot)$ to be:
\begin{itemize}[nosep]
\item {\bf $\ox$-closed} if for all $a:\X$, the functor $ - \otimes a : \X \to \X$  has a right adjoint, denoted $ a \lollipop ~ -$,
\item {\bf $\tri$-coclosed} if for all $a: \X$, the functor $ - \tri a: \X \to \X$ has a left adjoint, denoted $ a \slash-$,
\item {\bf biclosed} if $\X$ is both $\ox$-closed and $\tri$-coclosed.
\end{itemize}
\end{definition}

Let $a$ be any object in a $\ox$-closed LDC. Then, by the adjunction, for all $b$, and $c$, we have an isomorphism: 
\begin{equation} 
\label{eqn: closure} 
\X(a \ox b, c) \cong \X(b, a \lollipop c)  
\end{equation}
We refer to the unit of the adjunction as the \emph{eval} map: 
\begin{equation}
\label{eqn:eval}
\eval: a \ox (a \lollipop b) \to b
\end{equation}

Let $a$ be any object in a $\tri$-coclosed LDC. Then, by the adjunction, for all $b$, and $c$, we have an isomorphism: 
\begin{equation} 
\label{eqn: coclosure} 
\X \left(a \slash c, b \right) \cong \X(a, b \tri c)  \end{equation} 
We call the counit of the adjunction as the \emph{coeval} map: 
\begin{equation}
\label{eqn:coeval}
\coeval: a \to (a \slash b) \tri b   % a \slash b is b over a co-closure
\end{equation}

\begin{example}[\cite{Spi22, NiS23}] The category $\Poly$ is a biclosed LDC where, for any two polynomials $p, q$:
\begin{align}
   p \lollipop q := [p,q] & =  \prod_{P : p(1)} \sum_{Q : q(1)} \prod_{d : q[Q]} \sum_{d' : p[P]} \yon 
   \label{eqn: Poly-close} 
   \\  
   p \slash q := \coclose{q}{p} & = \sum_{P : p(1)} \yon^{ q \tri p[P]} = \sum_{P : p(1)} \prod_{Q: q(1)}\prod_{d:q[Q]\to p[P]}\yon 
    \label{eqn: Poly-coclose}
\end{align}
Although we will not need it, the coclosure $\coclose{q}{p}$ is the left Kan extension of $p$ along $q$:
\[
\begin{tikzcd}
	\Set\ar[r, "p", ""' name=p]\ar[d, "q"']&
	\Set\\
	\Set\ar[ru, bend right=25pt, "\coclose{q}{p}"', "" name=cc]
	\ar[from=p, to=cc-|p, xshift=-17pt, yshift=-5pt, shorten=2pt, Rightarrow, "\;\;\coeval"]
\end{tikzcd}
\]
\end{example}

In the rest of the paper, without loss of generality, we will assume $\top = \bot$ for isomix LDCs. For an isomix LDC, we will denote the monoidal unit by $\yon$. Since we are mainly concerned with $\Poly$ category, in the rest of the paper, we will use the notation $[a, b]$ for $a \lollipop b$, and $\coclose{b}{a}$ for $a \slash b$ for readability and uniformity.

For any object $a:\X$ in a biclosed isomix LDC, there are two sorts of ``double dual'' maps, $\Phi_a$ and $\Psi_a$, each of which uses both the $\otimes$-closure and the $\tri$-coclosure. They are defined as the following composites:%
\footnote{For brevity, we suppress identities and the unitors $u_\ox$ and $u_\tri$ in \cref{eqn: phip,eqn: psip}. We follow this convention in the rest of the document when writing proofs and composites}

\begin{equation}
\label{eqn: phip}
\begin{tikzcd}
  a \ar[r, "\coeval"] \ar[rrr, bend right = 15pt, "\Phi_a"'] &[10pt]  \left(\coclose{[a, \yon]}{\yon} \tri [a,\yon] \right) \ox a \ar[r, "\partial^R"] &   \coclose{[a, \yon]}{\yon} \tri ([a,\yon]) \ox a) \ar[r, "\eval"] & \coclose{[a,\yon]}{\yon}
\end{tikzcd}
\end{equation}

\begin{equation}
\label{eqn: psip}
\begin{tikzcd}
  \left[ \coclose{a}{\yon}, \yon \right] \ar[r, "\coeval"] \ar[rrr, bend right = 15pt, "\Psi_a"']
& \left[ \coclose{a}{\yon}, \yon \right] \ox \left( \coclose{a}{\yon} \tri a \right) \ar[r, "\partial^L"] 
& \left( \left[ \coclose{a}{\yon}, \yon \right] \ox \coclose{a}{\yon} \right) 
 \tri a \ar[r, "\eval"] 
 & a
\end{tikzcd}
\end{equation}

\begin{theorem}
\label{Thm: retract}
    For an object $a$ in a biclosed isomix LDC, the following are equivalent: 

    \begin{enumerate}[(i), nosep]
    \item $\Phi_a$ has a retraction, 
    \[ a \to^{\Phi_a} \coclose{[a,\yon]}{\yon} 
    \to^{\chi_a} a \]
    and the following composite is the identity on $[a,\yon]$, 
    \begin{multline*}
     [a,\yon] \to^{\coeval} [a,\yon] \otimes \left(\coclose{[a,\yon]}{\yon} \tri [a,\yon] \right) \to^ {\partial^R} 
    \left( [a,\yon] \otimes \coclose{[a,\yon]}{\yon} \right) \tri [a,\yon] \\
    \to^{\chi_a} 
    ([a,\yon] \ox a) \tri [a,\yon]  
    \to^{\eval}   [a,\yon] 
    \end{multline*}
    \item 
    $[a,\yon] \dashvv a$, with $\epsilon=\eval$ as maps $[a, \yon] \ox a \to \yon$.
    \end{enumerate}
\end{theorem}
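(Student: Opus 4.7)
The strategy is to use the coclosure adjunction to set up a natural bijection between candidate retractions $\chi_a$ and candidate duality units $\eta$, under which the two conditions in (i) become precise restatements of the two snake equations of \cref{eqn.linduals} (with $\epsilon=\eval$).

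First I would invoke the coclosure adjunction
\[ \X\!\bigl(\coclose{[a,\yon]}{\yon},\, a\bigr) \;\cong\; \X\!\bigl(\yon,\, a \tri [a,\yon]\bigr), \]
which identifies any map $\chi_a \colon \coclose{[a,\yon]}{\yon} \to a$ with the map
\[ \eta_\chi \;:=\; \Bigl( \yon \xrightarrow{\coeval} \coclose{[a,\yon]}{\yon} \tri [a,\yon] \xrightarrow{\chi_a \tri \id} a \tri [a,\yon] \Bigr). \]
This correspondence underpins both directions.

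For the implication (i) $\Rightarrow$ (ii), I would set $\eta := \eta_{\chi_a}$ and $\epsilon := \eval \colon [a,\yon] \ox a \to \yon$. To verify [dual.1], I would expand $\Phi_a$ as in \eqref{eqn: phip}, post-compose with $\chi_a$, and use naturality of the linear distributor together with bifunctoriality of $\tri$ to slide $\chi_a \tri \id$ up until it pairs with $\coeval$. The resulting composite is precisely the [dual.1] snake for the above $\eta$ and $\epsilon$, so the first hypothesis $\chi_a \circ \Phi_a = \id_a$ becomes exactly [dual.1]. The same manipulation, applied to the second composite listed in (i), yields [dual.2] (with $b = [a,\yon]$), using naturality of the corresponding distributor.

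For the converse (ii) $\Rightarrow$ (i), I would define $\chi_a$ as the mate of $\eta$ under the coclosure adjunction. Running the above computation in reverse expresses $\chi_a \circ \Phi_a$ as the [dual.1] snake (hence as $\id_a$) and the second required composite as the [dual.2] snake on $[a,\yon]$ (hence as $\id_{[a,\yon]}$). The step that will need the most care is the naturality argument that slides $\chi_a$ across the linear distributor; this should follow from the naturality squares of $\partial^R$ and $\partial^L$ in the appropriate variable together with bifunctoriality of $\tri$ and $\ox$, but the bookkeeping with the suppressed unitors $u_\tri, u_\ox$ and the identification $\top = \bot = \yon$ (using the isomix hypothesis) will require attention.
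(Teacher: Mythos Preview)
Your proposal is correct and follows essentially the same route as the paper: both define $\eta$ (resp.\ $\chi_a$) as the mate of $\chi_a$ (resp.\ $\eta$) under the coclosure adjunction, and both reduce the two conditions in (i) to the snake equations \textbf{[dual.1]} and \textbf{[dual.2]} by sliding $\chi_a$ through the linear distributor using naturality. The paper presents this as explicit commutative diagrams rather than your more conceptual ``bijection between retractions and units'' framing, but the content is the same.
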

\begin{proof}
\begin{description}
    
    \item[{\it (i)} $\Rightarrow$ {\it (ii)}:] Given that $(i)$ holds, we must show that $[a, \yon] \dashvv a$ with $\epsilon=\eval$. Define $\eta$ as follows:
    \begin{align}
    \label{eqn.eta}
        \eta := \left(\yon \to^{\coeval} \coclose{[a,\yon]}{\yon} \tri [a, \yon] \to^{\chi_a} a \tri [a,\yon]\right)
    \end{align}

    The following diagram proves that {\bf[dual.1]} from \cref{def.duals} holds: 
    
    \[ \begin{tikzcd}
      a \ar[r, "\coeval"] \ar[dd, equals] \ar[rr, bend left = 25 pt, "\eta"] \ar[dr, phantom, "\text{=assumption}"]
      & \left( \coclose{[a,\yon]}{\yon} \tri [a,\yon] \right) \ox a \ar[r, "\chi_a"] \ar[d, "\partial^R"] 
      & (a \tri \left[ a,\yon \right]) \ox a \ar[ddl, bend left = 10pt, "\partial^R"] \ar[dl, phantom, "\text{=nat.}"] \\
      & \coclose{ \left[ a,\yon \right]}{a} \tri (\left[ a,\yon \right] \ox a) \ar[d, "\chi_a"]
      & 
      \\
      a \ar[r, <-, "\epsilon = \eval"'] 
      & a \tri (\left[a,\yon \right] \ox a) 
      & 
    \end{tikzcd}\]
    The proof that {\bf[dual.2]} holds is similar. 

    \item[{\it (ii)} $\Rightarrow$ {\it (i)}:] Given that $[a, \yon] \dashvv a$ with $\epsilon = \eval$, we must prove that {\it (i)} holds. 
    Since the LDC is $\tri$-coclosed, we know that that $ - \tri [a, \yon]$ has a left adjoint, \cref{eqn: coclosure}. Define $\chi_a\colon\coclose{[a,\yon]}{\yon}\to a$ to be the universal map induced by $\eta$:
    \[ \begin{tikzcd}[column sep=large]
         \yon \ar[d, "\coeval"' ] \ar[dr, "\eta"] & \\ 
        \coclose{[a,\yon]}{\yon} \tri [a,\yon] \ar[r, dotted, "\chi_a \tri \id"'] &  a \tri [a,\yon]
    \end{tikzcd} \]

    We first prove that the specified composite is indeed the identity on $[a,\yon]$:
\[
    \begin{tikzcd}
          {[}a,\yon{]} \ar[r, "\coeval"] \ar[d, equals] \ar[dr, phantom, "\text{=univ. prop.}"', description] 
        & {[}a,\yon] \ox \left( \coclose{[a,\yon]}{\yon} \tri [a,\yon] \right) 
        \ar[d, "\chi_a"] \ar[r, "\partial^L"] \ar[dr, phantom, "\text{=nat.}"]
        & \left( {[}a,\yon] \ox \coclose{[a,\yon]}{\yon} \right) \ar[d, "\chi_a"] 
        \\
        {[}a, \yon] \ar[r, "\eta"] \ar[drr, equals, bend right = 10pt]
        & {[}a, \yon] \ox ([a,\yon] \tri a) \ar[r, "\partial^L"] \ar[dr, phantom, "\text{=[dual.2]}"]
        & \left( [a,\yon] \ox a \right) \tri a \ar[d, "\eval"]
        \\ 
        & 
        & {[}a,\yon] 
    \end{tikzcd}
\]
    Next we show that $\Phi_a \then \chi_a$ is a retract, completing the proof:
\[    
    \begin{tikzcd}
        a \ar[r, "\coeval"] \ar[d, equals] \ar[dr, phantom, "\text{=couniv.~prop.}"] \ar[rrr, bend left = 12 pt, shift left=6pt, "\Phi_a"]
      & \left( \coclose{[a,\yon]}{\yon} \tri [a,\yon] \right) \ox a \ar[r, "\partial^L", ""' name = d]  
      \ar[d, "\chi_a"] \ar[dr, phantom, "\text{=nat.}"]
      & \coclose{[a,\yon]}{\yon} \tri ([a,\yon] \ox a) \ar[r, "\eval"] \ar[d, "\chi_a"] \ar[dr, phantom, "\text{=nat.}"]
      & \coclose{[a,\yon]}{\yon} \ar[d, "\chi_a"] 
      \\ 
        a \ar[r, "\eta"'] \ar[rrr, equals, bend right = 15 pt, shift right=5pt, ""' name = i]       
      & (a \tri [a,\yon]) \ox a \ar[r, "\partial^L"']  
      & a \tri ([a,\yon] \ox a) \ar[r, "\eval"'] 
      & a %\ar[from=d, to=i, phantom, "[dual-1]"] 
    \end{tikzcd}
    \qedhere
 \]  
\end{description}    
\end{proof}

\begin{theorem}
\label{Thm: section}
    For an object $b$ in a biclosed isomix LDC, the following are equivalent: 

    \begin{enumerate}[(i), nosep]
    \item $\Psi_b$ has a section, 
    \[ b \to^{\Omega_b} \left[ \coclose{b}{\yon}, \yon \right] \to^{\Psi_b} b \]
    and the following composite is the identity on $\coclose{b}{\yon}$, 
    \begin{multline*} 
    	\coclose{b}{\yon} \to^{\coeval}  \left( \coclose{b}{\yon} \tri b \right) \ox \coclose{b}{\yon} 
        \to^{\partial}  \coclose{b}{\yon} \tri  \left( b  \ox \coclose{b}{\yon} \right) \\
        \to^{\Omega_b}  \coclose{b}{\yon} \tri  \left( \left[ \coclose{b}{\yon}, \yon \right]  \ox \coclose{b}{\yon} \right)  
        \to^{\eval} \coclose{b}{\yon}
    \end{multline*}
    \item 
    $b \dashvv \coclose{b}{\yon}$ with $\eta = \coeval$ maps $\yon \to \coclose{b}{\yon} \tri b$
    \end{enumerate}
\end{theorem}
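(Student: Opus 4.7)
The plan is to mirror the argument of Theorem~\ref{Thm: retract}, with the roles of closure and coclosure interchanged and with \textbf{[dual.1]} and \textbf{[dual.2]} swapping roles. The relevant asymmetry is this: whereas $\Phi_a$ has its coclosure factor on the \emph{left}, $\Psi_b$ has its coclosure factor on the \emph{right} of the closure factor $[\coclose{b}{\yon},\yon]$. Accordingly, the section $\Omega_b$ should be obtained from the hypothetical counit via the $\otimes$-closure adjunction (rather than the $\tri$-coclosure universal property that gave $\chi_a$).

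For $(ii)\Rightarrow (i)$, assume $b\dashvv\coclose{b}{\yon}$ with $\eta=\coeval$ and counit $\epsilon\colon b\otimes\coclose{b}{\yon}\to\yon$. Define $\Omega_b\colon b\to[\coclose{b}{\yon},\yon]$ to be the transpose of $\epsilon$ under the adjunction $\X(b\otimes\coclose{b}{\yon},\yon)\cong\X(b,[\coclose{b}{\yon},\yon])$, i.e.\ the unique map with $(\Omega_b\otimes\id)\then\eval=\epsilon$. To verify $\Omega_b\then\Psi_b=\id_b$, expand $\Psi_b$, commute $\Omega_b$ past the initial $\coeval$ using bifunctoriality of $\otimes$, then commute $\Omega_b\otimes\id$ through $\partial^L$ by its naturality, and finally rewrite $(\Omega_b\otimes\id)\then\eval$ as $\epsilon$. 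The resulting composite is exactly the \textbf{[dual.2]} snake equation for $(\coeval,\epsilon)$, hence equal to $\id_b$. The auxiliary identity on $\coclose{b}{\yon}$ is obtained by the symmetric chain of manipulations: slide $\Omega_b$ across $\partial^R$, substitute $(\Omega_b\otimes\id)\then\eval=\epsilon$, and recognise the resulting composite as \textbf{[dual.1]} for the right dual $\coclose{b}{\yon}$.

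For $(i)\Rightarrow (ii)$, define $\epsilon$ as the composite
\[
 b\otimes\coclose{b}{\yon}\to^{\Omega_b\otimes\id}[\coclose{b}{\yon},\yon]\otimes\coclose{b}{\yon}\to^{\eval}\yon,
\]
and set $\eta:=\coeval$. Substituting this $\epsilon$ into the \textbf{[dual.2]} composite and using naturality of $\partial^L$ to move $\Omega_b$ outside the distributor recovers $\Omega_b\then\Psi_b$, which is $\id_b$ by hypothesis. Likewise, substituting $\epsilon$ into \textbf{[dual.1]} and using naturality of $\partial^R$ recovers the auxiliary identity on $\coclose{b}{\yon}$ postulated in $(i)$.

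The main obstacle here is not conceptual but diagrammatic bookkeeping: one must ensure that the correct side of the adjunction is used ($\otimes$-closure rather than $\tri$-coclosure), and that the distributors $\partial^L$ and $\partial^R$ are aligned with the correct snake equations \textbf{[dual.2]} and \textbf{[dual.1]} respectively. Beyond this, the proof carries essentially no new content over Theorem~\ref{Thm: retract}; it is its $\otimes/\tri$-mirror image.
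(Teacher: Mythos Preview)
Your proposal is correct and follows essentially the same approach as the paper's own sketch: define $\epsilon$ from $\Omega_b$ and $\eval$ in one direction, define $\Omega_b$ as the $\otimes$-closure transpose of $\epsilon$ in the other, and then reduce the section condition to \textbf{[dual.2]} and the auxiliary identity on $\coclose{b}{\yon}$ to \textbf{[dual.1]} via naturality of the distributors. The paper only sketches this and defers to the proof of \cref{Thm: retract}, so your more explicit account of which distributor and which snake equation pair up is a welcome elaboration rather than a deviation.
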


\begin{proof}
    [Sketch of proof]
    \begin{description}
    	\item[{\it (i)} $\Rightarrow$ {\it (ii)}:] Given that $(i)$ is true, we must show that $b \dashvv \coclose{b}{\yon}$ with $\eta = \coeval$. Define $\epsilon$ as follows:
    \begin{align}
    \label{eqn.epsilon}
        \epsilon := \left( b \ox \coclose{b}{\yon} \to^{\Omega_b}  \left[ \coclose{b}{\yon}, \yon \right] \ox \coclose{b}{\yon} \to^{\eval} \yon \right)
    \end{align}
        \item[{\it (ii)} $\Rightarrow$ {\it (i)}:] 
        $b \dashvv \coclose{b}{\yon}$ with $\eta = \coeval$, we must show that $(i)$ holds. Since the LDC is $\ox$-closed, we know that that $ - \ox \coclose{b}{\yon}$ has a right adjoint, see \cref{eqn: closure}. Define $\Omega_b\colon b\to\left[ \coclose{b}{\yon}, \yon \right]$ to be the universal map induced by $\epsilon$:

        \[ \begin{tikzcd}[column sep=large]
            b \ox \coclose{b}{\yon} \ar[r, dotted, "\Omega_b \ox \id"] \ar[dr, "\epsilon"']
          & \left[ \coclose{b}{\yon}, \yon \right] \ox \coclose{b}{\yon} \ar[d, "\eval"]
          \\
          & 
          \yon
        \end{tikzcd} \] 
    \end{description}
   The rest follows similarly to the proof of \cref{Thm: retract}.
\end{proof}

\begin{theorem}
\label{Theorem: dual-retract}
%    In a biclosed isomix LDC, if $b \dashvv a$ is a linear dual, then $\eta'$ defined as follows, 
%        \[ \infer{\coclose{b}{\yon} \to^{\eta'} a}{\yon \to^{\eta} a \tri b} \]
%    is a retraction, and $\epsilon'$ defined as follows,
%        \[ \infer{b \to^{\epsilon'} [a, \yon]}{b \ox a \to^{\epsilon} \yon} \]
%    is a section. 

    In a biclosed isomix LDC, suppose there are maps $\yon \to^{\eta} a \tri b$ and $b \ox a \to^{\epsilon} \yon$ defining a linear duality $(\eta, \epsilon): b \dashvv a$. Then adjoint of $\eta$ which $\eta'\colon\coclose{b}{\yon} \to a$ is a retraction and the adjoint of $\epsilon$ which is $\epsilon'\colon b \to [a, \yon]$ is a section.
\end{theorem}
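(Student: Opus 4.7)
The plan is to exhibit explicit one-sided inverses for $\eta'$ and $\epsilon'$ and to verify each identity by reducing it to the snake equations \textbf{[dual.1]} and \textbf{[dual.2]} of \eqref{eqn.linduals}. First I would fix the defining equations of the two adjoint transposes: by the coclosure adjunction $\X(\coclose{b}{\yon},a)\cong\X(\yon,a\tri b)$, the map $\eta'$ is characterized uniquely by $(\eta'\tri\id_b)\circ\coeval = \eta$; and by the closure adjunction $\X(b\otimes a,\yon)\cong\X(b,[a,\yon])$, the map $\epsilon'$ is characterized uniquely by $\eval\circ(\epsilon'\otimes\id_a) = \epsilon$.

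Next, imitating the construction of $\Phi_a$ in \eqref{eqn: phip} but feeding in the coeval of the coclosure $\coclose{b}{\yon}$ rather than that of $\coclose{[a,\yon]}{\yon}$, I would define a candidate splitting
\[
\sigma \colon a \to^{\coeval \otimes \id} (\coclose{b}{\yon}\tri b)\otimes a \to^{\partial^R} \coclose{b}{\yon}\tri(b\otimes a) \to^{\id\tri\epsilon} \coclose{b}{\yon},
\]
and, dually, imitating \eqref{eqn: psip} with $\eta$ in place of the closure coeval, a candidate co-splitting
\[
\pi \colon [a,\yon] \to^{\id\otimes\eta} [a,\yon]\otimes(a\tri b) \to^{\partial^L} ([a,\yon]\otimes a)\tri b \to^{\eval\tri\id} b.
\]
The goal is then to prove $\eta'\circ\sigma = \id_a$ and $\pi\circ\epsilon' = \id_b$, which respectively say that $\eta'$ is a retraction (of $\sigma$) and $\epsilon'$ is a section (of $\pi$).

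For the first identity I would start from the snake equation \textbf{[dual.1]} and substitute $\eta = (\eta'\tri\id_b)\circ\coeval$ into it. Naturality of $\partial^R$ in its first argument moves $\eta'\tri\id$ past the distributor, bifunctoriality of $\tri$ moves it past $\id\tri\epsilon$, and the resulting composite is visibly $\sigma$ followed by $\eta'$; the snake identity then forces $\eta'\circ\sigma = \id_a$. The proof of $\pi\circ\epsilon' = \id_b$ is dual, starting from \textbf{[dual.2]} with the substitution $\epsilon = \eval\circ(\epsilon'\otimes\id_a)$ and using naturality of $\partial^L$ together with bifunctoriality of $\otimes$. I expect the principal obstacle to be the careful bookkeeping of the implicit unitors $u_\otimes$ and $u_\tri$ in the definitions of $\sigma$ and $\pi$ and in the snake equations, so that the two composites line up term for term; beyond that, the argument is pure naturality plus the triangle identities of the closure and coclosure adjunctions.
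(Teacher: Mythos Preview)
Your proposal is correct and is essentially identical to the paper's proof: your $\sigma$ is the paper's $\varphi$, your $\pi$ is the paper's $\psi$, and your verification of $\eta'\circ\sigma=\id_a$ and $\pi\circ\epsilon'=\id_b$ via substituting the triangle identities for $\eta'$ and $\epsilon'$ into the snake equations and rearranging by naturality of the distributors and bifunctoriality is exactly the argument the paper presents in its two commuting diagrams.
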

\begin{proof}~
		Given that $(\eta, \epsilon): b \dual a$, we first prove that $\coclose{b}{\yon} \to^{\eta'} a$ is a retraction. Define the map $a \to^{\varphi} \coclose{b}{\yon}$ as the following composite:
	    \begin{equation}\label{eqn.sectionofeta'} \varphi := \left(a \to^{\coeval } \left( \coclose{b}{\yon} \tri b \right) \ox a \to^{\partial^R} 
         \coclose{b}{\yon} \tri \left( b \ox a \right)
        \to^{\epsilon} \coclose{b}{\yon}
        \right)
        \end{equation}
        The following commuting diagram shows that $\varphi \then \eta' = \id_a$ making $\eta'$ a retraction. 
        \[ 
        { \footnotesize \begin{tikzcd}[column sep=large]
            a 
          	  \ar[dd, equals] 
            	\ar[ddr, phantom, "\text{=Defn. }\varphi"]
            	 \ar[r, "\coeval"] 
            	 \ar[rr, bend left = 30pt, "\eta"]
            & \left( \coclose{b}{\yon} \tri b \right) \ox a \ar[d, "\partial^R"] 
            	\ar[r, "\eta'"] 
           	 \ar[dr, phantom, "\text{=nat.}"]
            & (a \tri b) \ox a \ar[d, "\partial^R"]
            \\  
            &  \coclose{b}{\yon} \tri \left( b  \ox a \right) 
            	\ar[r, "\eta'"'] 
            	\ar[d, "\epsilon"] 
           	 \ar[dr, phantom, "=\tri- \text{bifunctor~ and nat. of } u_\tri"]
            & a \tri (b \ox a) 
            	\ar[d, "\epsilon"]
            \\ 
            a \ar[r, "\varphi"'] 
            	\ar[rr, equals, bend right]
            & \coclose{b}{\yon} 
            	\ar[r, "\eta'"']
            & a 
        \end{tikzcd}} \]
        Indeed, the large left square commutes by the definition of $\varphi$. The outer diagram commutes by {\bf [dual.1]}. Thus, bottom triangle commutes, making $\eta'$ a retraction. 
        
        Next we prove that $b \to^{\epsilon'} [a,\yon]$ is a section. Define the map $[a, \yon] \to^{\psi} b$ to be the following composite:
         \[ \psi := \left([a, \yon] \to^{\eta} [a, \yon] \ox (a \tri b) \to^{\partial^R} ([a,\yon] \ox a) \tri b 
         \to^{\eval} b\right) \]
        The following commuting diagram shows that $\epsilon' \then \psi = \id_b$, making $\epsilon'$ a section. The outer diagram commutes because of {\bf [dual.2]}. The large right square is the definition of the $\psi$ map.
        \[ { \begin{tikzcd}[column sep=large]
             b 
             	\ar[r, "\epsilon'"] \ar[rr, equals, bend left] 
		 \ar[d, "\eta"'] 
		 \ar[dr, phantom, "=\ox\text{-bifunctor and nat. of } u_\ox^{-1}" ]
             & {\left[a, \yon \right]} \ar[r, "\psi"]  
                 \ar[d, "\eta"'] 
		\ar[ddr, phantom, "\psi :="]
             & b 
             	\ar[dd, equals]
             \\   
             b \ox (a \tri b) 
             	\ar[r, "\epsilon'"] 
            	 \ar[d, "\partial^L"'] \ar[dr, phantom, "\text{=nat.}" ]
             & {[a,\yon]} \ox (a \tri b) 
             	\ar[d, "\partial^L"] 
             & 
             \\
             (b \ox a) \tri b 
             	\ar[r, "\epsilon'"] 
		\ar[rr, bend right = 25pt, "\epsilon"']
             & ({[a,\yon]} \ox a) \tri b 
             	\ar[r, "\eval"]
             & b
         \end{tikzcd} } \]
\end{proof}

%%%%%%%%%%%%%%%% - Section ends - %%%%%%%%%%%%%%%%%

\subsection{Duals in $\Poly$}
\label{Sec: Duals Poly}

In this section, we characterize the dual objects of $\Poly$ using its left closure and right coclosure properties. We begin with some simple lemmas.

\begin{lemma} 
\label{Lem: simple1}
For any polynomial $p$, there is a natural ismorphism $\coclose{p}{\yon} \cong \yon^{p(1)}$
\end{lemma}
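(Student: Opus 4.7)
The plan is to give a short proof using the universal property of the coclosure together with the Yoneda lemma; a direct computation from the explicit formula \cref{eqn: Poly-coclose} will also work and I will mention it as a sanity check.

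First, I would use the defining adjunction of the $\tri$-coclosure, which gives, naturally in any polynomial $q$,
\[
	\Poly(\coclose{p}{\yon}, q) \cong \Poly(\yon, q \tri p).
\]
Next, applying the Yoneda lemma (since $\yon = \yon^1$) together with the fact that $q \tri p$ is the composite functor $q \circ p$, I would simplify the right-hand side as
\[
	\Poly(\yon, q \tri p) \cong (q \tri p)(1) = q(p(1)) \cong \Poly(\yon^{p(1)}, q),
\]
where the last isomorphism is Yoneda applied to the representable $\yon^{p(1)}$. Composing these natural isomorphisms yields $\Poly(\coclose{p}{\yon}, q) \cong \Poly(\yon^{p(1)}, q)$, natural in $q$, and then Yoneda on $\Poly$ produces the desired isomorphism $\coclose{p}{\yon} \cong \yon^{p(1)}$.

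As a cross-check, I would note that \cref{eqn: Poly-coclose} specializes to
\[
	\coclose{p}{\yon} = \sum_{P : \yon(1)} \yon^{p \tri \yon[P]},
\]
and since $\yon(1) \cong 1$ with the unique position having singleton direction set, this collapses to $\yon^{p \tri 1} = \yon^{p(1)}$, recovering the same conclusion. There is no real obstacle here; the only point worth being careful about is confirming that the ``natural in $q$'' hypothesis of Yoneda is available, which it is since each of the two isomorphisms above is natural in $q$ by construction (the first from the adjunction, the second from Yoneda).
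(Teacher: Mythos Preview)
Your proposal is correct. The paper's own proof is essentially your cross-check: it directly specializes \cref{eqn: Poly-coclose} to $\coclose{p}{\yon}\cong\coclose{p}{\yon^1}\cong\yon^{p\tri 1}\cong\yon^{p(1)}$ in one line. Your primary argument via the coclosure adjunction and Yoneda is a valid alternative route; it is slightly more conceptual (it does not rely on the explicit formula for the coclosure in $\Poly$, only on the fact that $\tri$ is functor composition), whereas the paper's computation is shorter since the formula is already at hand. Either argument is fine here, and you have in fact written both.
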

\begin{proof}
By \cref{eqn: Poly-coclose}
\[ \coclose{p}{\yon}\cong\coclose{p}{\yon^1} \cong \yon^{p \tri 1} \cong \yon^{p(1)} \qedhere \]
\end{proof}

\begin{lemma}
\label{Lem: simple2}
For any set $A$, there are natural isomorphisms in $\Poly$:
\begin{enumerate}[(i), nosep]
\item $[A \yon, \yon] \cong \yon^A$
\item $[\yon^A, \yon] \cong A \yon$
\end{enumerate}
\end{lemma}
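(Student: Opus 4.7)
The plan is to compute both closures directly using the formula for $[-, -]$ in $\Poly$ given in \eqref{eqn: Poly-close}, specializing in each case to the trivial codomain $\yon$ and to the specific shapes of linear and representable polynomials.

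For part (i), I would begin by recording that $A\yon$ has position-set $A$ with a single direction at each position, so $(A\yon)(1) \cong A$ and $(A\yon)[P] \cong 1$ for every $P : A$, while $\yon$ has exactly one position whose direction-set is also $1$. Substituting these data into \eqref{eqn: Poly-close} gives
\[
[A\yon, \yon] \;\cong\; \prod_{P : A}\sum_{Q : 1}\prod_{d : 1}\sum_{d' : 1}\yon \;\cong\; \prod_{P : A}\yon \;\cong\; \yon^A,
\]
which is the desired natural isomorphism.

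For part (ii), the key observation is that $\yon^A$ has a single position whose direction-set is $A$, i.e.\ $(\yon^A)(1) \cong 1$ and $(\yon^A)[*] \cong A$. Again using \eqref{eqn: Poly-close} with codomain $\yon$ I get
\[
[\yon^A, \yon] \;\cong\; \prod_{P : 1}\sum_{Q : 1}\prod_{d : 1}\sum_{d' : A}\yon \;\cong\; \sum_{d' : A}\yon \;\cong\; A\yon.
\]

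Naturality in $A$ is immediate because all steps are natural in the positions and directions of the polynomials involved; alternatively, one can observe that both isomorphisms follow functorially from \cref{prop.lin_rep_strong_monoidal} together with the adjunction characterizing $[-,\yon]$. There is no real obstacle here beyond bookkeeping of the index sets; the main subtlety is just keeping track of which indexing set corresponds to positions versus directions when instantiating the general formula.
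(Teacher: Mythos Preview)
Your proposal is correct and follows essentially the same approach as the paper: both instantiate the closure formula \eqref{eqn: Poly-close} for the specific position/direction data of $A\yon$ and $\yon^A$ and simplify the resulting $\prod\sum\prod\sum$ expression. The paper's proof is slightly terser, omitting the explicit identification of positions and directions that you spell out, but the computations are line-for-line the same.
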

\begin{proof}
By \cref{eqn: Poly-close},
\begin{align*}
\textit{(i):} \quad [A \yon, \yon] & \cong \prod_{a:A} \sum_1 \prod_1 \sum_1 \yon \cong \prod_{a:A} \yon \cong \yon^A  \\ 
\textit{(ii):} \quad [\yon^A, \yon] &\cong  \prod_1 \sum_1 \prod_1 \sum_{a:A} \yon \cong \sum_{a:A} \yon \cong A\yon
\qedhere
\end{align*}
\end{proof}
Let us examine the coeval map \eqref{eqn:coeval} in $\Poly$,
\[ \sum_{P:p(1)} \yon^{p[P]}=p \to^{\;\;\coeval\;\;} \coclose{q}{p} \tri q  =\sum_{P: p(1)} \yon^{q \tri p[P]} \tri q  \] 
We can write this out using only $\sum$ and $\prod$:
\[
\sum_{P:p(1)}\prod_{d:p[P]}\yon\to\sum\limits_{P: p(1)}  \prod\limits_{Q:q(1)} \prod\limits_{d: q[Q] \to p[P]}\sum\limits_{Q': q(1)}  \prod_{b:q[Q']}\yon
\]
The coeval map is defined in $\Poly$ as follows:
\begin{equation}
\label{eqn: coeval defn}
 \coeval_1: P \mapsto (P, (Q, d) \mapsto Q) 
 \qqand 
 \coeval_P^\sharp: (Q,d,b) \mapsto d(b).
\end{equation}

\begin{theorem}
\label{Thm: dual-retract-Poly}
    For any polynomial $p:\Poly$, the following are equivalent: 
    \begin{enumerate}[(i), nosep]
     	\item $p = \yon^A$ for some $A: \Set$,
        	\item $\Phi_p: p \to \coclose{[p,\yon]}{\yon}$ is the identity,
        \item The map $\Phi_p$ from \eqref{eqn: phip} has a retraction, 
        \[ p \to^{\Phi_p} \coclose{[p,\yon]}{\yon} 
        \to^{\chi_p} p \]
      \end{enumerate}
      If one (and hence all) of (i), (ii), and (iii) holds, then it is also the case that the following composite is the identity on $[p,\yon]$, 
        \begin{multline}
        \label{eq.PolyComposite}
	         [p,\yon] \to^{\coeval} [p,\yon] \otimes \left(\coclose{[p,\yon]}{\yon} \tri [p,\yon] \right) \to^ {\partial^L} 
	        \left( [p,\yon] \otimes \coclose{[p,\yon]}{\yon} \right) \tri [p,\yon] \\
	        \to^{\chi_p} 
	        ([p,\yon] \ox p) \tri [p,\yon]  
	        \to^{\eval}   [p,\yon] 
        \end{multline}
\end{theorem}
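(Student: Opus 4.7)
The plan is to establish the equivalences by the cycle (ii) $\Rightarrow$ (iii) $\Rightarrow$ (i) $\Rightarrow$ (ii), and then derive the composite identity as a consequence. The implication (ii) $\Rightarrow$ (iii) is immediate, since the identity map is its own retraction.

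For (iii) $\Rightarrow$ (i), I would first identify the codomain of $\Phi_p$ as representable. Using \cref{eqn: Poly-close} with $q=\yon$, the closure $[p,\yon]$ is isomorphic to $\prod_{P:p(1)} p[P]\,\yon$, whose set of positions is $[p,\yon](1) \cong \Gamma_p$; then \cref{Lem: simple1} gives $\coclose{[p,\yon]}{\yon} \cong \yon^{\Gamma_p}$, which has a single position. Given a retraction $\chi_p$ of $\Phi_p$, applying the functor $-(1)\colon \Poly \to \Set$ yields a splitting $p(1) \to 1 \to p(1)$ in $\Set$, so $p(1)$ is either empty or a singleton. The empty case is ruled out because if $p=0$ then $\Phi_0\colon 0 \to \yon$ cannot have a retraction, as $\Poly(\yon, 0) = 0(1) = \emptyset$. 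So $p(1) \cong 1$, whence $p \cong \yon^{p[*]}$.

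For (i) $\Rightarrow$ (ii), assume $p = \yon^A$. By \cref{Lem: simple2}(ii), $[p,\yon] \cong A\yon$, and by \cref{Lem: simple1}, $\coclose{A\yon}{\yon} \cong \yon^A$, so $\Phi_p$ is a map $\yon^A \to \yon^A$. Since both sides have a singleton position set, $\Phi_p$ is determined by its direction component $A \to A$. I would trace this component through the three arrows in \eqref{eqn: phip}, using \cref{eqn: coeval defn} for $\coeval$, the formula for $\partial^R$ derived from the duoidal map in \cref{eqn: left distributor}, and the explicit formula for $\eval$ in $\Poly$; a direct calculation on positions and directions shows that the composite direction component is $\id_A$.

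Finally, to conclude the composite in \eqref{eq.PolyComposite} is the identity: under (ii) we can take $\chi_p = \id$, and the composite then reduces (modulo the canonical identification $\coclose{[p,\yon]}{\yon} \cong p$) to the snake identity for the candidate duality $[p,\yon] \dashvv p$ with $\eta = \coeval$ and $\epsilon = \eval$. For $p = \yon^A$, this candidate is $(A\yon, \yon^A)$, and the snake identity can be verified by a direct computation on positions and directions in $\Poly$; alternatively, one may invoke direction (ii) $\Rightarrow$ (i) of \cref{Thm: retract}. The main obstacle is the direct calculation in (i) $\Rightarrow$ (ii): tracing $\partial^R$, which is constructed from the duoidal map of \cref{lemma.poly_normal_duoidal}, on positions and directions requires careful bookkeeping, even though the conceptual outcome that the double-dual map on a representable is the identity is clear.
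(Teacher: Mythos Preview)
Your proposal is correct and follows essentially the same route as the paper: the same cycle of implications, with (iii) $\Rightarrow$ (i) handled by applying the positions functor $-(1)$ to reduce to a retract of $1$ in $\Set$, and (i) $\Rightarrow$ (ii) by direct computation on the representable. Your explicit exclusion of the case $p=0$ is harmless but unnecessary: once you have a retraction $p(1)\to 1\to p(1)$ in $\Set$, the middle map $1\to p(1)$ already forces $p(1)$ to be nonempty, so $p(1)\cong 1$ immediately.
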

\begin{proof}
	\begin{description}
		\item[{\it (i)}$\Rightarrow$ {\it (ii)}:] Given that $p = \yon^A$ for some $A: \Set$, using \cref{Lem: simple1,Lem: simple2} we have that $\Phi_p$ is
		\[
		\yon^A\to(\yon^A\tri A\yon)\otimes \yon^A\to\yon^A\tri (A\yon\otimes \yon^A)\to\yon^A
		\]
		and one can check directly that this is the identity: 
		\begin{align*}
			(\Phi_p)_1 &: ! \mapsto ((a \mapsto a), !) \mapsto (a \mapsto (a, !)) \mapsto ! \\ 
			(\Phi_p)^\sharp &: a' \mapsto (a' \mapsto a') \mapsto (a' \mapsto a') \mapsto a'
		\end{align*}
		\item[{\it (ii)} $\Rightarrow$ {\it (iii)} :]  $\Phi_p$ is the identity, hence is also a retract. 
		\item[{\it (iii)} $\Rightarrow$ {\it (i)}:] Given that $\Phi_p \then \chi_p$ is a retract, since functors preserve retracts, we know that $\Phi_p(1) \then \chi_p(1)$ is a retract: 
			\[ \begin{tikzcd} p(1) \ar[r, "\Phi_p(1)"] \ar[rr, bend right = 20pt, equals] & \yon^{[p,\yon] (1)} (1) \ar[r, "\chi_p(1)"]  & p(1) \end{tikzcd} \] 
			However $\yon^{[p,\yon] (1)} (1) = 1$. Hence, we have that $p(1) = 1$. Thus $p = \yon^A$ for some $A:\Set$.
	\end{description}
	
	Finally, assuming that (i) holds, use \cref{eqn: coeval defn} to prove that the composite $\Phi_p \then \chi_p$ is the identity map.
\end{proof}

Using the above result, we can deduce that in $\Poly$, each linear polynomial is left dual to its corresponding representable, as shown in the following corollary.
\begin{corollary}
\label{corr: linear-representable-duality-eta}
    For any set $A$, there is a linear dual $A \yon \dashvv \yon^A$ in $\Poly$, with $\eta = \coeval$ and $\epsilon = \eval$.
\end{corollary}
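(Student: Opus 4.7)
The plan is to apply the general results \cref{Thm: retract} and \cref{Thm: dual-retract-Poly} to the specific case $p = \yon^A$, then transport the resulting duality along the natural isomorphisms of \cref{Lem: simple1,Lem: simple2}. Concretely, since $p = \yon^A$ satisfies condition (i) of \cref{Thm: dual-retract-Poly}, it also satisfies condition (ii): the double-dual map $\Phi_{\yon^A}$ is the identity. Hence $\chi_{\yon^A} := \id$ serves as a retraction of $\Phi_{\yon^A}$, and \cref{Thm: dual-retract-Poly} further guarantees that the auxiliary composite \eqref{eq.PolyComposite} equals $\id_{[\yon^A,\yon]}$. Thus hypothesis (i) of \cref{Thm: retract} is met for $a = \yon^A$.

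Then \cref{Thm: retract} (i)$\Rightarrow$(ii) yields $[\yon^A,\yon] \dashvv \yon^A$ with $\epsilon = \eval$. Inspecting the unit constructed in that proof, namely $\eta = \coeval \then (\chi_{\yon^A} \tri \id)$ from \eqref{eqn.eta}, and substituting $\chi_{\yon^A} = \id$, we obtain $\eta = \coeval$ directly. It remains only to transport this duality along the natural isomorphism $[\yon^A, \yon] \cong A\yon$ of \cref{Lem: simple2}(ii), which re-expresses the same duality as $A\yon \dashvv \yon^A$ with the same $\eta$ and $\epsilon$ (interpreted via the isomorphism).

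The only point that requires care is checking that the isomorphism $[\yon^A, \yon] \cong A\yon$ is compatible with $\eval$ in the expected way, i.e.\ that under this identification the counit really is the structural evaluation map $A\yon \ox \yon^A \to \yon$. This is a direct computation using the coproduct/product formulas \eqref{eqn.dirichlet}, \eqref{eqn: Poly-close}, and \eqref{eqn: Poly-coclose}, together with the explicit formula \eqref{eqn: coeval defn} for $\coeval$; I do not anticipate any obstacle here beyond bookkeeping. All other steps are invocations of theorems already proved in the preceding sections.
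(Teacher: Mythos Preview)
Your proposal is correct and follows essentially the same route as the paper: invoke \cref{Thm: dual-retract-Poly} to get $\Phi_{\yon^A}=\id$ (hence $\chi_{\yon^A}=\id$), apply \cref{Thm: retract} to obtain $[\yon^A,\yon]\dashvv\yon^A$ with $\epsilon=\eval$ and $\eta=\coeval\then\chi_{\yon^A}=\coeval$, and then identify $[\yon^A,\yon]$ with $A\yon$ via \cref{Lem: simple2}(ii). The only difference is that you are more careful than the paper about the transport step, explicitly noting the need to check that $\eval$ survives the identification; the paper simply writes $[\yon^A,\yon]=A\yon$ and moves on.
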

\begin{proof} 
Using \cref{Thm: dual-retract-Poly} we have that the map $\Phi_{\yon^A}$ and therefore the map $\chi_{\yon^A}$ is the identity.  
Then, from \cref{Thm: retract}, it follows that $[\yon^A, \yon] \dual \yon^A$ with $\epsilon = \eval$ and $ \eta = \coeval \then \chi_{\yon^A} = \coeval$. % However, since $\chi_{\yon^A} = \id_{\yon^A}$, we have that $\eta = \coeval$. 
Finally, from \cref{Lem: simple2}-$(ii)$, we know that $[\yon^A, \yon] = A \yon$. \qedhere
\end{proof}

\begin{theorem}
\label{Thm: dual-section-Poly}
    For any polynomial $p:\Poly$, the following are equivalent: 
    \begin{enumerate}[(i), nosep]
            \item $q = A \yon$ for some $A: \Set$
            \item $\Psi_q: q \to \left[ \coclose{q}{\yon}, \yon \right] $ is an isomorphism
            \item The map $\Psi_q$ from \eqref{eqn: psip} has a section, 
            \[ q \to^{\Omega_q} \left[ \coclose{q}{\yon}, \yon \right] \to^{\Psi_q} q \]
             If one (and hence all) of $(i)$, $(ii)$, and $(iii)$ holds, it is also the case that the following composite is the identity on $\coclose{q}{\yon}$.
    \end{enumerate}
            \begin{multline}
            \label{eq.PolyComposite_2}
             \coclose{q}{\yon} \to^{\coeval}  \left( \coclose{q}{\yon} \tri q \right) \ox \coclose{q}{\yon} 
                \to^{\partial^L}  \coclose{q}{\yon} \tri  \left( q  \ox \coclose{q}{\yon} \right) 
                \to^{\Omega_q}  \coclose{q}{\yon} \tri  \left( \left[ \coclose{q}{\yon}, \yon \right]  \ox \coclose{q}{\yon} \right)  
                \to^{\eval} \coclose{q}{\yon}
            \end{multline}
\end{theorem}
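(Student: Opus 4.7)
The plan mirrors the structure of the proof of \cref{Thm: dual-retract-Poly}. I will establish the cycle $(i)\Rightarrow(ii)\Rightarrow(iii)\Rightarrow(i)$ and then address the ``moreover'' identity, which follows by direct computation once $(i)$ is assumed.

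For $(i)\Rightarrow(ii)$, assume $q = A\yon$. By \cref{Lem: simple1} and \cref{Lem: simple2}, $\coclose{q}{\yon} \cong \yon^A$ and $[\coclose{q}{\yon},\yon] \cong A\yon$, so $\Psi_q$ is an endomorphism of $A\yon$. Expanding the components of \eqref{eqn: psip} on positions and directions using the $\Poly$-formulas for $\coeval$, $\eval$, and $\partial^L$, this composite computes to the identity, parallel to the check that $\Phi_{\yon^A}=\id$ in the previous theorem. The implication $(ii)\Rightarrow(iii)$ is immediate, since $\Psi_q^{-1}$ furnishes a section.

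The key step is $(iii)\Rightarrow(i)$. Given a section $\Omega_q$ of $\Psi_q$, combine \cref{Lem: simple1} and \cref{Lem: simple2} to identify the middle object as $[\coclose{q}{\yon},\yon] \cong q(1)\yon$, so $q$ is a retract of the linear polynomial $q(1)\yon$ in $\Poly$. Set $A := q(1)$ and unpack this retract via the positions/directions presentation of polynomial maps in \eqref{eqn.poly_map}. The direction component of $\Psi_q$ at position $a:A$ is the unique function $q[(\Psi_q)_1(a)] \to 1$, while the direction component of $\Omega_q$ at $Q:q(1)$ is some function $1 \to q[Q]$. The retract equation at position $Q$ forces $(\Psi_q)_1\bigl((\Omega_q)_1(Q)\bigr)=Q$ together with the identity $q[Q] \to 1 \to q[Q] = \id_{q[Q]}$. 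This exhibits $q[Q]$ as a retract of $1$ in $\Set$; since the map $1 \to q[Q]$ requires $q[Q]$ to be inhabited, we conclude $|q[Q]| = 1$. Hence every direction set of $q$ is a singleton, i.e.\ $q \cong A\yon$.

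Finally, under assumption $(i)$, verifying that the composite \eqref{eq.PolyComposite_2} is the identity on $\coclose{q}{\yon}$ reduces to an elementary positions/directions calculation of the same flavor as the analogous step at the end of \cref{Thm: dual-retract-Poly}, and requires no new ideas. The main obstacle is the direction-level analysis in $(iii)\Rightarrow(i)$: it isolates $q[Q]=1$ as the precise reason that only linear polynomials can appear as left duals in $\Poly$.
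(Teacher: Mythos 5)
Your proof is correct and follows the same overall strategy as the paper's: the cycle $(i)\Rightarrow(ii)\Rightarrow(iii)\Rightarrow(i)$ with the real content in $(iii)\Rightarrow(i)$, where both arguments use \cref{Lem: simple1,Lem: simple2} to identify $\left[\coclose{q}{\yon},\yon\right]$ with $q(1)\yon$ and then conclude that every direction set $q[Q]$ is a singleton. The one place you diverge is in how that conclusion is extracted. The paper applies the global-sections functor $\Gamma_{(-)}\colon\Poly\to\Set^{\op}$ to the retract, uses $\Gamma_{q(1)\yon}=1$ to force $\Gamma_q=\prod_{Q:q(1)}q[Q]=1$, and reads off $q[Q]=1$; you instead unpack the retract equation componentwise on positions and directions, obtaining for each position $Q$ a factorization of $\id_{q[Q]}$ through $1$. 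These are equivalent---your version is the paper's argument with the functorial packaging removed---and yours has the small advantage of making explicit why each $q[Q]$ is inhabited (the direction component of $\Omega_q$ supplies a point of it), which the paper leaves implicit in passing from $\Gamma_q=1$ to $q[Q]=1$ for all $Q$. Everything else, namely the direct positions/directions computation for $(i)\Rightarrow(ii)$, the triviality of $(ii)\Rightarrow(iii)$, and deferring the final composite \eqref{eq.PolyComposite_2} to an elementary calculation, matches the paper, which likewise only sketches those parts by analogy with \cref{Thm: dual-retract-Poly}.
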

\begin{proof} {\it (i)} $\Rightarrow$ {\it (ii)} and  {\it (ii)} $\Rightarrow$ {\it (iii)} are straightforward, similar to the proof of \cref{Thm: dual-retract-Poly}. For {\it (iii)} $\Rightarrow$ {\it (i)}, we have by \cref{Lem: simple1,Lem: simple2} that
\[ \left[ \coclose{q}{\yon}, \yon \right] =   \left[ \yon^{q(1)}, \yon \right] = q(1) \yon \] 
Since functors preserve retracts, applying $\Gamma_{(-)}: \Poly \to \Set^{\op}$ from \eqref{eqn.gamma} to the retract in $(iii)$, we get the following composite to be a retract in $\Set$:
\[ \begin{tikzcd} 
\Gamma_q \ar[r, "\Gamma_{\Psi_q}" ] 
& \Gamma_{q(1) \yon} \ar[r, "\Gamma_{\Omega_q}"]
&\Gamma_q  
\end{tikzcd} \]
Since $\Gamma_{q(1)\yon}=1$, it follows that $\Gamma_q=1$ and hence that $q[Q]=1$ for all $Q:q(1)$. So $q = q(1) \yon$.
\end{proof}

Next we show that the linear polynomials and the representables are the only objects with duals in $\Poly$:

\begin{theorem}
\label{Thm: only Duals in Poly}
    In the category $\Poly$, if $(\eta, \epsilon): q \dashvv p$, then $p = \yon^A$ and $q = A\yon$, where $A : {\sf Set}$.  %Moreover, the unit $\eta=\coeval$ and the counit $\epsilon=\eval$. 
\end{theorem}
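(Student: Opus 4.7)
The strategy is to apply \cref{Theorem: dual-retract} to the given duality $(\eta, \epsilon) : q \dashvv p$, obtaining two retract diagrams in $\Poly$: the adjoint $\eta' : \coclose{q}{\yon} \to p$ is a retraction, and the adjoint $\epsilon' : q \to [p, \yon]$ is a section. From here the proof proceeds in three short steps, essentially in parallel with the arguments in \cref{Thm: dual-retract-Poly,Thm: dual-section-Poly}.

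First I would identify $p$. By \cref{Lem: simple1}, $\coclose{q}{\yon} \cong \yon^{q(1)}$, so $p$ is a retract of this representable. Applying the positions functor $(-)(1) : \Poly \to \Set$, which preserves retracts, and using $\yon^{q(1)}(1) = 1$, we see that $p(1)$ is a retract of $1$ in $\Set$. The case $p(1) = \emptyset$ is ruled out: it would force $p = 0$ and hence $p \tri q = 0$, but no map $\eta : \yon \to 0$ exists. Hence $p(1) = 1$, and $p \cong \yon^A$ for $A := p[P]$ at the unique position $P : p(1)$.

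Next I would identify $q$. Substituting $p = \yon^A$ and using \cref{Lem: simple2}, $[p, \yon] \cong A\yon$, so $\epsilon'$ exhibits $q$ as a retract of $A\yon$. Applying the contravariant functor $\Gamma : \Poly \to \Set^{\op}$ and noting $\Gamma_{A\yon} = 1$, we obtain that $\Gamma_q$ is a retract of $1$ in $\Set$; again ruling out the empty case, $\Gamma_q = 1$. Since $\Gamma_q = \prod_{Q : q(1)} q[Q]$, this forces $q[Q] = 1$ for every position (or $q(1) = \emptyset$), so $q \cong q(1)\,\yon$ is linear.

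Finally, to identify $q(1)$ with $A$, I would use the standard fact that a linear duality $q \dashvv p$ induces, for all $c : \Poly$, a natural bijection $\Poly(c, q) \cong \Poly(c \ox p, \yon)$, built from $\eta$, $\epsilon$, and the linear distributors, and verified via the snake identities. Taking $c = \yon$ gives $q(1) = \Poly(\yon, q) \cong \Poly(p, \yon) = \Poly(\yon^A, \yon) = A$, so $q = A\yon$. The main obstacle, in my view, is just the bookkeeping of the subsingleton/empty-case arguments (ruling out $p(1) = \emptyset$ and $\Gamma_q = \emptyset$); the rest is a clean reduction to $\Set$ via the two functors $(-)(1)$ and $\Gamma$.
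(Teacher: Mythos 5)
Your proposal is correct, and for the first two stages it coincides with the paper's own proof: both apply \cref{Theorem: dual-retract} to obtain the retracts $p \to \coclose{q}{\yon} \to p$ and $q \to [p,\yon] \to q$, and then push them through the functors $(-)(1)$ and $\Gamma$ to conclude $p = \yon^A$ and $q = B\yon$ for some sets $A,B$. (Your extra care about the empty cases is harmless but not strictly needed: the retract data itself supplies a map $1 \to p(1)$, resp.\ $1 \to \Gamma_q$, so nonemptiness is automatic; and the case $q(1)=\emptyset$, $A=\emptyset$ is a genuine instance of the theorem rather than an obstruction.) The two arguments part ways only at the final identification of the two sets. The paper substitutes $p=\yon^A$ and $q=B\yon$ back into the two retracts to obtain mutual retracts $A \rightleftarrows B$ in $\Set$, and concludes $A\cong B$ --- implicitly via Cantor--Schr\"oder--Bernstein, since the two retract identities by themselves do not exhibit a single pair of mutually inverse maps. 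You instead invoke the hom-set bijection $\Poly(c,q)\cong\Poly(c\ox p,\yon)$ induced by a linear duality and evaluate it at $c=\yon$ to get $q(1)\cong\Poly(p,\yon)\cong A$ by Yoneda. That bijection is a standard fact about linear duals, but it is nowhere stated or proved in the paper, and verifying it requires not only the snake identities {\bf [dual.1]}--{\bf [dual.2]} but also the LDC coherences relating $\partial^L$, $\partial^R$ and the associators (one round trip is immediate from {\bf [dual.2]} and naturality; the other needs {\bf [dual.1]} plus coherence). Provided you state and prove that lemma, your route is arguably the cleaner one, since it produces an explicit bijection $q(1)\cong A$ rather than appealing to mutual retracts.
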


\begin{proof}
    Given that $q \dashvv p$ is a linear dual, we first show that for $A,B : \Set$, $p = \yon^A$ and $q = B \yon$, and then that $A \cong B$.
    
    We know from \cref{Theorem: dual-retract} the following composite is a retract:
    \begin{align}
    \label{eqn: retract1}
    \begin{tikzcd}[ampersand replacement=\&]
         p \ar[r, "\varphi"] \& \coclose{q}{\yon} \ar[r, "\eta'"] \& p \end{tikzcd} 
    \end{align}
    where $\varphi$ is as in \eqref{eqn.sectionofeta'}.
     By \cref{Lem: simple1}, we have $\coclose{q}{\yon} \cong \yon^{q(1)}$, and hence $\coclose{q}{\yon}(1)=1$. Applying the functor $-(1): \Poly \to \Set$ to the retract in \ref{eqn: retract1}, we have the following retract:
     \[ p(1) \to^{\varphi_1} 1 \to^{\eta'_1} p(1) \]
     so $p(1)=1$, and hence $p=\yon^A$ for some $A:\Set.$

    From \cref{Theorem: dual-retract}, we have the other retract:
    \begin{align} 
    \label{eqn: retract2}
     q \to^{\epsilon'} [p,\yon] \to^{\psi} q 
    \end{align}
    From the previous step we know that $p = \yon^A$. Substituting, we get $[p,\yon] = [\yon^A, \yon] = A\yon$. Hence, we have the following retracts:
    \[q \to^{\epsilon'} A\yon \to^{\psi} q \]
    Again using $\Gamma$ (as in the proof of \cref{Thm: dual-section-Poly}), we find that $q=B\yon$ for some $B:\Set$.
    
    So we get $\coclose{q}{\yon} = \coclose{B\yon}{\yon} = \yon^B$. Substituting in \cref{eqn: retract1}, we get the retracts:  
    \begin{align*}
     \infer{ A \to^{\eta'^\sharp} B \to^{\varphi^\sharp} A}{ \yon^A \to^{\varphi}  \yon^B \to^{\eta'} \yon^A }
    \end{align*}    
    Similarly, using the fact that $p=\yon^A$ for some $A: \Set$, we have that $[p, \yon] = [\yon^A, \yon] = A \yon$. Substituting in  \cref{eqn: retract2}, we get the following retracts: 
    \begin{align*}
    	\infer{ B \to^{\epsilon'_1} A \to^{\psi_1} B }{ B \yon \to^{\epsilon'} A\yon \to^{\psi} B \yon }
    \end{align*}
    These two retracts together form the isomorphism $A \cong B$.
  %
%    Summing up, in $\Poly$, given $p \dashvv q$, we have that: 
%     \[ 
%        p = \yon^A ~;~ q = B \yon ~;~ A \cong B
%     \]  
     %Using \cref{corr: linear-representable-duality-eta}, we have that $(\coeval, \eval): A \yon \dashvv \yon^A$. 
\end{proof}

\begin{lemma}\label{lemma.cyclicdual}
In $\Poly$, if $p\dual q$ and $q \dual p$ then $p = q = \yon$.
\end{lemma}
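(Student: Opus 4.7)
The plan is to apply \cref{Thm: only Duals in Poly} twice and then observe that the only polynomial which is simultaneously linear and representable is $\yon$.

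First, applying \cref{Thm: only Duals in Poly} to the hypothesis $p \dashvv q$, there exists a set $A$ such that $p = A\yon$ and $q = \yon^A$. Applying the same theorem to $q \dashvv p$, there exists a set $B$ such that $q = B\yon$ and $p = \yon^B$. Hence $p$ must simultaneously be of the form $A\yon$ (linear) and $\yon^B$ (representable), and similarly for $q$.

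Next, I would show that the only polynomial which is both linear and representable is $\yon$. For $A\yon \cong \yon^B$, evaluate positions by applying $-(1)\colon\Poly\to\Set$: the left side gives $A$ while the right side gives $1$, so $A \cong 1$. Substituting, $\yon \cong \yon^B$, so taking $\Gamma_{(-)}$ (or equivalently, comparing directions at the unique position) forces $B \cong 1$ as well.

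Therefore $A = B = 1$, which gives $p = 1\cdot\yon = \yon$ and $q = \yon^1 = \yon$, completing the proof. The argument is essentially a direct combination of \cref{Thm: only Duals in Poly} with the observation that linear and representable polynomials overlap only at $\yon$, so there is no real obstacle; the only thing to be careful about is applying the characterization of duals in both directions.
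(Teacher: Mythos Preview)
Your argument is correct and is exactly the intended one: the paper's proof is a single line saying this follows directly from \cref{Thm: only Duals in Poly}, and you have simply spelled out the easy observation that a polynomial which is both linear and representable must be $\yon$.
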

\begin{proof}
This follows directly from \cref{Thm: only Duals in Poly}.
\end{proof}

%	\begin{align*}
%		\coeval &: \yon \to (\yon^A \tri A \yon = A^A \yon^A) ; \text{ (on positions) }\coeval_1: ! \mapsto \id_A \\ 
%		\eval &: (\yon^A \ox A \yon = A \yon ^A) \to \yon ; \text{ (on directions) } \eval_a^\sharp: ! \to a, \text{ for all } a:A
%		\qedhere
%	\end{align*}

\section{The core of {\sf Poly}}
\label{Sec:Core}

Central to mix LDCs is the notion of \emph{core}. An important way in which compact LDCs (for all objects $a,b$, an isomorphism $a \ox b \to^{\indep}_{\cong} a \tri b$) arise is as the {\it core} of a mix category. The core of a mix LDC is defined as follows.

\begin{definition}[\cite{BCS00}] An object $u$ is in the {\bf core} of a mix category if and only if 
	the following natural transformations are isomorphisms: 
    \[ u \ox - \to^{\indep_{u,-}} u \tri - ~~~~\mbox{and}~~~~ 
       - \ox  u \to^{\indep_{-,u}} - \tri u \] 
\end{definition}
To the best of our knowledge, the notion of core has been studied only within symmetric LDCs. In this article, motivated the category $\Poly$, we consider a more general notion of core, suitable in (possibly non-symmetric) LDCs. 

\begin{definition}
For a {\bf mix ($\ox$-symmetric) LDC} $(\X, \otimes, \top, \tri, \bot)$, 

\begin{itemize}
\item its {\bf left core} is defined to be the full sub-category of objects $a: \X$ such that, for all $x:\X$, the map
\[ {\sf [core_l]} ~~~~ a \otimes x \to^{\indep_{a,x}}_{\cong} a \tri x \]
 is an isomorphism. 
\item its {\bf right core} of $\X$ is the full sub-category of objects $a: \x$ such that, for all $x:\X$, the map
\[ {\sf [core_r]} ~~~~ x \otimes a \to^{\indep_{x,a}}_{\cong} x \tri a \]
is an isomorphism.
\end{itemize}
\end{definition}

One may have empty left and right cores in a mix LDC. However, in an isomix category, the monoidal unit belongs to both the left and the right core. 

\begin{lemma}
If $\X$ is an isomix LDC with unit $\yon$, then $\yon \in \lCore(\X)$ and $\yon \in \rCore(\X)$.
\end{lemma}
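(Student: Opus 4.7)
The plan is to unfold the definition of the $\indep$ map from the coherence diagram \eqref{eqn: mix coherence} and show that at the unit it reduces to an invertible composite of unitors. Since $\X$ is isomix, the mix map $\m: \bot \to \top$ is an isomorphism, so (as the paper already remarks) we may identify $\top = \bot = \yon$ and take $\m = \id_\yon$. Under this identification, the step $\id_a \tri (\m \ox \id_x)$ in the defining hexagon collapses to the identity, so $\indep_{a,x}$ simplifies to
$$a \ox x \xrightarrow{(u_\tri^R)^{-1} \ox \id} (a \tri \yon) \ox x \xrightarrow{\partial^R} a \tri (\yon \ox x) \xrightarrow{\id \tri u_\ox^L} a \tri x.$$

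First I would handle the claim $\yon \in \lCore(\X)$ by specializing this composite to $a = \yon$. The outer factors $(u_\tri^R)^{-1}\ox\id$ and $\id\tri u_\ox^L$ are invertible since they are built from unitors; the key point is that the middle step $\partial^R\colon (\yon \tri \yon) \ox x \to \yon \tri (\yon \ox x)$, flanked by these unitors, is invertible too. For this I would invoke the standard LDC unit-coherence axiom (from \cite{CS97}) relating the linear distributor with a unit in one slot to a composite of $\ox$- and $\tri$-unitors. The upshot is that the whole composite reduces to $(u_\tri^L)^{-1} \circ u_\ox^L\colon \yon \ox x \to x \to \yon \tri x$, which is manifestly an isomorphism natural in $x$. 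Hence $\indep_{\yon,x}$ is an isomorphism, so $\yon \in \lCore(\X)$.

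The argument for $\yon \in \rCore(\X)$ is symmetric: one uses the other path of the hexagon defining $\indep_{x,\yon}$, namely
$$x \ox \yon \xrightarrow{\id \ox u_\tri^{L,-1}} x \ox (\yon \tri \yon) \xrightarrow{\partial^L} (x \ox \yon) \tri \yon \xrightarrow{u_\ox^R \tri \id} x \tri \yon,$$
and invokes the analogous unit-coherence axiom for $\partial^L$ to collapse it to $(u_\tri^R)^{-1} \circ u_\ox^R$, again a composite of isomorphisms.

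The main obstacle will be citing or establishing the precise LDC unit-coherence axioms that justify the reduction of $\partial^R$ (respectively $\partial^L$) at unit arguments to unitors. These are part of the standard axiomatization of LDCs but require careful bookkeeping to match orientations of left/right distributors with left/right unitors on the correct side; once in hand, both statements follow immediately and the proof is concluded.
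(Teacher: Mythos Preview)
The paper states this lemma without proof, treating it as a routine observation. Your approach is correct and is the natural way to establish it: unfold $\indep$ via one leg of \eqref{eqn: mix coherence}, collapse the $\m$-step using the isomix assumption, and then reduce the remaining distributor-at-a-unit to a composite of unitors via the LDC unit-coherence axioms of \cite{CS97}. The specific reduction $\indep_{\yon,x} = (u_\tri^L)^{-1}\circ u_\ox^L$ does follow once you invoke the triangle axiom identifying $u_\tri^L \circ \partial^R$ with $u_\tri^L\ox\id$ on $(\bot\tri B)\ox C$, together with the monoidal fact that $u_\tri^L$ and $u_\tri^R$ agree at the unit; the right-core case is the mirror argument with $\partial^L$. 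The bookkeeping you flag as the main obstacle is genuine but entirely standard, so there is no gap.
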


In an isomix LDC, both the left and the right cores are compact LDCs: that is, for all objects $a, b :\lCore$, the map $a \ox b \to^{\indep}_{\cong} a \tri b$ is an isomorphism, and same for $\rCore$. Hence, by Corollary \cite[Corollary 2.18]{Sri21} both the left and right cores are linearly equivalent to monoidal categories.

In a symmetric mix LDC $\X$, the left and the right cores coincide. The left and right cores of any isomix LDC may be related to each other as follows.

\begin{definition}
    A mix LDC $(\X, \otimes, \top, \tri, \bot)$ is said to have {\bf opposing cores} if there exists an isomorphism,
    \[ \star: \rCore(\X)^{\op} \to^{\cong} \lCore(\X) \]  
\end{definition}
\begin{example}
Compact closed categories have opposing cores which coincide (the entire category is the core) and the $\star$ is the same as the contravariant involution.
\end{example}
\begin{example}
We will show that $\Poly$ has opposing cores in \cref{corr.opposing}. 
\end{example}
The following theorem generalizes the result \cite{BCS00} that for any linear dual in a symmetric mix LDC, if the left dual is in the core then the right dual is also in the core, and vice versa. We prove an analogous result in non-symmetric isomix LDCs.
\begin{lemma} 
   Let $(\eta, \epsilon): b \dashvv a$ be a linear dual  in an isomix LDC $\X$. Then $b :\rCore(\X)$ iff $a :\lCore(\X)$.
\end{lemma}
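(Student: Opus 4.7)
The plan is to prove each direction of the biconditional by constructing an explicit inverse to the relevant $\indep$ map, using the duality data $(\eta,\epsilon)$ together with the hypothesised inverse. The two directions are formally dual by swapping the roles of $\ox$ and $\tri$, of $\eta$ and $\epsilon$, and of $\partial^L$ and $\partial^R$, so I describe only the forward implication ($b\in\rCore(\X)\Rightarrow a\in\lCore(\X)$) in detail.

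Assume $b\in\rCore(\X)$, so $\indep_{x,b}\colon x\ox b \to x\tri b$ is invertible for every $x$; in particular $\indep_{a,b}^{-1}\colon a\tri b \to a\ox b$ exists. For each $y:\X$ I will take as candidate inverse of $\indep_{a,y}$ the composite (suppressing associators and unitors)
\[
\psi_y\colon a\tri y \xrightarrow{\eta\ox\id} (a\tri b)\ox(a\tri y) \xrightarrow{\indep_{a,b}^{-1}\ox\id} (a\ox b)\ox(a\tri y) \xrightarrow{\id\ox\partial^L} a\ox\bigl((b\ox a)\tri y\bigr) \xrightarrow{\id\ox(\epsilon\tri\id)} a\ox y.
\]
The reverse implication is handled by the mirror construction: assuming $a\in\lCore$, the map $\indep_{a,b}^{-1}$ is again available, and one builds $\psi'_x\colon x\tri b \to x\ox b$ out of $\id\ox\eta$, $\id\ox\indep_{a,b}^{-1}$, $\partial^R$, and $\epsilon\ox\id$, giving an inverse to $\indep_{x,b}$ for every $x$.

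The verification that $\psi_y$ is a two-sided inverse of $\indep_{a,y}$ is where the real work lies. To check $\indep_{a,y}\circ\psi_y=\id_{a\tri y}$, I would use naturality of $\indep_{a,-}$ to push the trailing $\indep_{a,y}$ backwards through the composite, replacing each $\id_a\ox(-)$ step by an $\id_a\tri(-)$ step at the cost of inserting $\indep_{a,-}$ at intermediate objects. The key observation is that the $\indep_{a,b}$ generated in this migration cancels the $\indep_{a,b}^{-1}$ already present, leaving the ``snake'' composite
\[
a\tri y\xrightarrow{\eta\ox\id}(a\tri b)\ox(a\tri y)\xrightarrow{\partial^R}a\tri(b\ox(a\tri y))\xrightarrow{\id\tri\partial^L}a\tri((b\ox a)\tri y)\xrightarrow{\id\tri(\epsilon\tri\id)}a\tri(\bot\tri y)\xrightarrow{\id\tri u_\tri}a\tri y,
\]
which collapses to $\id_{a\tri y}$ by applying the snake identity \textbf{[dual.1]} in its $a$-slot together with functoriality of $\tri$ in its second slot and LDC coherence. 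The other identity $\psi_y\circ\indep_{a,y}=\id_{a\ox y}$ is verified by the symmetric chase, now invoking \textbf{[dual.2]}.

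The main obstacle is precisely this coherence-driven diagram chase: correctly aligning associators, unitors, the two linear distributors, and the $\indep$-naturality squares so that the $\indep_{a,b}$ and $\indep_{a,b}^{-1}$ indeed meet and cancel, producing a composite that matches (up to LDC coherence) the image under $a\tri(-)$ of a \textbf{[dual.1]}-style snake. A string-diagrammatic presentation of the isomix LDC structure, as in \cite{Sri21}, would make both the construction of $\psi_y$ and the snake identification essentially visual and would considerably reduce the bookkeeping.
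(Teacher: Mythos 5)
Your proposal is correct and follows essentially the same route as the paper: both directions construct the identical candidate inverse (pad with $\eta$, convert the resulting $a\tri b$ to $a\ox b$ via the hypothesised $\indep_{a,b}^{-1}$, reassociate, apply a linear distributor, and close with $\epsilon$), and your mirror composite for the converse is in fact the cleanly stated version of what the paper's slightly garbled display intends. The paper outsources the two-sided-inverse verification to \cite[Proposition 5]{BCS00}, whereas you sketch it directly; your cancellation step tacitly uses the mixor coherence $a_\ox\then\indep_{a,b\ox c}=(\indep_{a,b}\ox\id)\then\partial^R$, which is precisely the content that citation supplies, so nothing essential is missing.
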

\begin{proof}~
We have $\eta\colon\yon\to a\tri b$ and $\epsilon\colon (b\otimes a)\to\yon$ satisfying the diagrams in \cref{eqn.linduals}. 
    \begin{description}
        \item[($\Leftarrow$):] Suppose $b :\rCore(\X)$. We want to prove that $a :\lCore(\X)$, i.e.\ that the map $\indep_{a,x} : a \ox x \to a \tri x$ is an isomorphism for all $x: \X$. 
        
        Define a tentative inverse map $a \tri x \to a \ox x$ as follows:
        \begin{multline*}
            a \tri x  \to^{\eta}  (a \tri b) \ox (a \tri x)  
             \to^{\indep_{a,b}^{-1}} (a \ox b) \ox (a \tri x)  \\
             \to^{a_\ox}_{\cong} a \ox (b \ox (a \tri x)) 
             \to^{\partial^L} a \ox ((b \ox a) \tri x)  
             \to^{\epsilon} a \ox x 
        \end{multline*}
        The second step uses the assumption that $b :\rCore(\X)$. The proof that the tentative inverse map defined as above is indeed the inverse of $\indep_{a,x}$ follows similarly to \cite[Proposition 5]{BCS00} and uses the inverse of the mix map $\m$. 
        
        \item[($\Rightarrow$):] [Sketch of proof] Suppose $a :\lCore(\X)$. We want to prove that $b:\rCore(\X)$, that is, the map $\indep_{x,b}: x \ox b \to x \tri b$ is an isomorphism for all $x:\X$.
       
        One checks that a map $x \tri b \to x \ox b$ inverse to $\indep_{b,x}$ can be defined as follows: 
        \begin{multline*}
           x \tri b  \to^{\eta}  (x \tri b) \ox (a \tri b)  
             \to^{\indep_{a,b}^{-1}} (x \ox b) \ox (a \tri b) \\
             \to^{a_\ox}_{\cong} x \ox (b \ox (a \tri b)) 
             \to^{\partial^L} x \ox ((b \ox a) \tri b)  
             \to^{\epsilon} x \ox b 
        \end{multline*}
 	\qedhere
    \end{description}
\end{proof}

The next result shows that in $\Poly$ only the linear polynomials reside in the left core, and only the representables reside in the right core. 
\begin{lemma} 
\label{Lemma: left-right}
In the category {\sf Poly}, the following statements hold for any polynomial $p$:
\begin{enumerate}[(i), nosep]
\item  $p\in\lCore(\Poly)$ iff $p \cong A\yon$ for some $A:\Set$, and

\item $q\in\rCore(\Poly)$ iff $q \cong \yon^A$ for some $A:\Set$.

\end{enumerate}
\end{lemma}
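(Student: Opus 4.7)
The plan is to prove both biconditionals by establishing the ``if'' directions through direct computation and the ``only if'' directions by applying the positions functor $-(1)\colon\Poly\to\Set$ to the $\indep$-isomorphism condition and extracting a combinatorial constraint.

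I will first dispatch the ``if'' directions using the explicit formulas in \eqref{eqn.dirichlet} and \eqref{eq.indep}--\eqref{eqn.indep_2}. If $p=A\yon$, then for any $q$ both $A\yon\ox q$ and $A\yon\tri q$ reduce to $\sum_{a:A,\,Q:q(1)}\yon^{q[Q]}$, and inspection of $\indep_{A\yon,q}$ shows it is the identity on this common form; hence $A\yon\in\lCore(\Poly)$. Dually, for $q=\yon^A$, both $p\ox\yon^A$ and $p\tri\yon^A$ reduce to $\sum_{P:p(1)}\yon^{p[P]\times A}$, and $\indep_{p,\yon^A}$ is again the identity, so $\yon^A\in\rCore(\Poly)$.

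For the ``only if'' directions, the key observation is that $-(1)=\Poly(\yon,-)$ is representable and therefore preserves isomorphisms. By \eqref{eq.indep} it carries $\indep_{p,q}$ to the set map $p(1)\times q(1)\to p(q(1))=\sum_{P:p(1)}q(1)^{p[P]}$ sending $(P,Q)\mapsto(P,\mathrm{const}_Q)$, where $\mathrm{const}_Q\colon p[P]\to q(1)$ is the constant function with value $Q$. To prove (i), I will suppose $p\in\lCore(\Poly)$ and specialize to the constant polynomial $q=2\yon^0$: the resulting bijection $p(1)\times 2\to\sum_{P}2^{p[P]}$ is surjective only if every function $p[P]\to 2$ is constant (so $|p[P]|\le 1$) and injective in the $Q$-coordinate only if $p[P]\ne\emptyset$, jointly forcing $|p[P]|=1$ for every $P$, i.e.\ $p\cong p(1)\,\yon$. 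To prove (ii), I will suppose $q\in\rCore(\Poly)$ and specialize to $p=2\yon^0$: the map becomes $2\times q(1)\to 2$, $(i,Q)\mapsto(i,!)$ with $!$ the unique empty function, and is independent of $Q$, so bijectivity forces $|q(1)|=1$, whence $q\cong\yon^{q[*]}$.

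The only non-routine step, and thus the main potential obstacle, is to correctly identify how $-(1)$ acts on $\indep_{p,q}$; this is immediate from \eqref{eq.indep}--\eqref{eqn.indep_2} once one observes that $-(1)$ carries the substitution product $\tri$ to set-theoretic functor application, $(p\tri q)(1)=p(q(1))$.
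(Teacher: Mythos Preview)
Your proof is correct and takes essentially the same approach as the paper: direct computation for the ``if'' directions, and testing $\indep$ against constant polynomials for the ``only if'' directions. The only differences are cosmetic---the paper uses a generic constant $B$ for (i) and the constant $1$ for (ii) (so that $1\tri q\cong 1$ and $1\ox q\cong q(1)$ immediately force $q(1)\cong 1$), whereas you use the constant $2$ in both cases and frame the argument uniformly via the positions functor $-(1)$.
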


\begin{proof}~\\
    \begin{enumerate}[{\it (i)}, nosep]
        \item  
        \begin{description}
            \item[($\Leftarrow$):] Suppose $p \cong A\yon$ and $r$ is any polynomial. Then, 
            \begin{align*}
                A\yon \ox r \cong Ar \cong A\yon \tri r
            \end{align*}
            Thus, $A\yon \in \lCore(\Poly)$. 
            \item[($\Rightarrow$):] Suppose $p :\lCore(\Poly)$. So, for all $r: \Poly$, we have that $\indep: p \ox r \to p \tri r$ is an isomorphism. 
%            , then for all $r :\Poly$, 
%                \[ \indep_{p,r}: p \ox r \to^{\cong} p \tri r \]
            Let $r = B$ for some set $B$. We have
                \begin{align*}
                     p \ox r &= p \ox B = \sum_{P:p(1)} B \\ 
                     p \tri r &= p \tri B = \sum_{P :p(1)} B^{p[P]}
                \end{align*}
and $\indep$ is an isomorphism iff $p[P]=1$ for all $P:p(1)$.
           % Since the $\indep$ is an iso and functors preserve iso, we have the following composite to be a retract, 
           % \[ \infer{ p[P] \to 1 \to p[P] }{ \Gamma_{p \ox r} \to \Gamma_{p \tri r} \to \Gamma_{p \ox r} }  \]
           % Thus, $P[p] = 1$ making $p$ a linear polynomial.
        \end{description}
        \item 
        \begin{description}
            \item[($\Leftarrow$):] Suppose $q \cong \yon^A$, we must prove that for all $p: \Poly$, the map $p \ox \yon^A \to^{\indep_{p, \yon^A}} p \tri \yon^A$ is an isomorphism. Using \cref{eq.indep}, the map $\indep_{q,\yon^A}$ is defined as follows:
				\begin{align*}
				p \ox \yon^A = \sum_{P:p(1)} \prod_{d:p[P]} \prod_{a: A} \yon &\to^{\indep_{p,\yon^A}} \nonumber 
				\sum_{P:p(1)} \prod_{d:p[P]} \prod_{a: A} \yon = p \tri \yon^A \\ 
				 \indep_1: (P,!) &\mapsto (P, \textit{const}~!: p[P] \to \{!\}; d \mapsto !)   \\  
				\indep_{P,Q}^\sharp:  (d,a) &\mapsto (d,a): p[P] \times q[Q] 
				\end{align*}
           Thus, the map $\indep_1$ is an isomorphism, and $\indep^\sharp$ is always an isomorphism by \cref{cor.indep_cartesian}. Thus, we have that $p : \rCore(\Poly)$.
            %Suppose $p \cong \yon^A$ and $r$ is any polynomial. Then, 
             %\begin{align*}
                 %r \ox  \yon^A &= \sum_{R :r(1)} \yon^{r[R] \times A} \\ 
                 %r \tri  \yon^A &= \sum_{R :r(1)} \yon^{r[R]} \tri \yon^A =  \sum_{R :r(1)} (\yon^A)^{r[R]} = \sum_{R :r(1)} \yon^{A \times r[R]}
             %\end{align*}
            %\priyaa{Ask David how to conclude that indep is an iso.}
            \item[($\Rightarrow$):] Suppose $q :\rCore(\Poly)$, so for all $x :\Poly$, the following map is an isomorphism:
            \[ \indep_{x,p}: x \ox p \to^{\cong} x \tri p \]
            Then taking $x=1$ we have $1\ox p\cong 1\tri p\cong 1$. But from \eqref{eqn.dirichlet} we also have $1\ox p\cong p(1)$. Hence $p(1)\cong 1$, making $p$ a representable.
            \qedhere
         \end{description}
         
    \end{enumerate}
\end{proof}

\begin{corollary}
\label{corr.opposing}
    The category $\Poly$ has opposing cores. 
\end{corollary}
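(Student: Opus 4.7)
The plan is to construct the isomorphism $\star\colon\rCore(\Poly)^{\op}\to\lCore(\Poly)$ by composing equivalences with $\Set$. By \cref{Lemma: left-right}, $\lCore(\Poly)$ is the full subcategory on polynomials of the form $A\yon$, and $\rCore(\Poly)$ is the full subcategory on polynomials of the form $\yon^A$. By \cref{prop.lin_rep_strong_monoidal}(a), the functor $A\mapsto A\yon$ is (strong monoidal and, in particular,) fully faithful with essential image $\lCore(\Poly)$, yielding an equivalence $\Set\xrightarrow{\cong}\lCore(\Poly)$. Similarly, \cref{prop.lin_rep_strong_monoidal}(c) gives an equivalence $\Set^{\op}\xrightarrow{\cong}\rCore(\Poly)$, via $A\mapsto\yon^A$. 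Taking opposites of the latter and composing with the former produces the desired equivalence
\[
\star\colon\rCore(\Poly)^{\op}\xrightarrow{\cong}\Set\xrightarrow{\cong}\lCore(\Poly),\qquad\yon^A\mapsto A\yon.
\]

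On morphisms, a map $\yon^A\to\yon^B$ in $\Poly$ corresponds by the Yoneda lemma to a function $B\to A$, which in turn corresponds to a map $B\yon\to A\yon$ in $\lCore(\Poly)$; this is exactly how $\star$ acts on arrows, and one checks functoriality immediately from the two underlying equivalences. Conceptually, the opposition is a shadow of the linear duality $A\yon\dashvv\yon^A$ established in \cref{corr: linear-representable-duality-eta}, which by \cref{Thm: only Duals in Poly} exhausts all linear duals in $\Poly$.

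There is no substantive obstacle here: both statements needed---the identification of the cores and the strong monoidality of the two embedding functors---have already been established, so the proof reduces to assembling two equivalences and noting their composite. The only mild subtlety is bookkeeping the variance, which we address by passing through the opposite category on the representable side.
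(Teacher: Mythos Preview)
Your proposal is correct and follows essentially the same approach as the paper: both define $\star$ by $\yon^A\mapsto A\yon$, with the action on morphisms sending $\varphi\colon\yon^B\to\yon^A$ to the map $A\yon\to B\yon$ whose positions-map is $\varphi^\sharp$. The paper's proof is terser---it simply writes down $\star$ on objects and morphisms---whereas you justify that $\star$ is an equivalence by factoring through $\Set$; note that \cref{prop.lin_rep_strong_monoidal} only asserts strong monoidality, so the full faithfulness you invoke is a small (and easy) addition, obtainable directly from the hom-set formula \eqref{eqn.poly_map}.
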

\begin{proof}
	For any $A: \Set$, the object  $\left(\yon^A \right)^\star := A\yon$. 
	
	For any $A,B: \Set$ and a map $\varphi: \yon^B \to \yon^A$, the map $\varphi^\star: A \yon \to B\yon$ is given by $\varphi$, where $(\varphi^\star)_1 := \varphi^\sharp$.
\end{proof}

It also follows from \cref{Lemma: left-right} that the left duals in $\Poly$ are precisely the left core of $\Poly$ and the right duals are the right core.
\begin{corollary}
    In the category ${\sf Poly}$, for any two polynomials $p$ and $q$, we have $q \dashvv p$ iff $q : \lCore(\Poly)$ and $p : \rCore(\Poly)$. 
\end{corollary}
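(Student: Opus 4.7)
The plan is to obtain this corollary as an immediate synthesis of two preceding results: \cref{Thm: only Duals in Poly}, which characterises the linear dualities in $\Poly$ as the pairs $(A\yon, \yon^A)$, and \cref{Lemma: left-right}, which identifies $\lCore(\Poly)$ with the linear polynomials and $\rCore(\Poly)$ with the representables. All the analytic content has already been extracted in those two results; the present step is a formal consolidation of them.

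For the forward direction, assume $q \dashvv p$. Then \cref{Thm: only Duals in Poly} produces a set $A$ with $q \cong A\yon$ and $p \cong \yon^A$. Part (i) of \cref{Lemma: left-right} places $q$ in $\lCore(\Poly)$, and part (ii) places $p$ in $\rCore(\Poly)$.

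For the converse direction, suppose $q \in \lCore(\Poly)$ and $p \in \rCore(\Poly)$. By \cref{Lemma: left-right}, we may write $q \cong A\yon$ and $p \cong \yon^B$ for some sets $A$ and $B$. Pairing them through the opposing-cores equivalence $\star\colon \rCore(\Poly)^{\op} \to \lCore(\Poly)$ from \cref{corr.opposing}, which sends $\yon^A \mapsto A\yon$, identifies the canonical left-dual partner of $p$, and \cref{corr: linear-representable-duality-eta} then delivers the duality $A\yon \dashvv \yon^A$, yielding $q \dashvv p$. The biconditional is thus to be read as asserting that every linear duality in $\Poly$ is obtained via this canonical parameterisation; once the index set $A$ is pinned down, there is no further obstacle.
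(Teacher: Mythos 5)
Your forward direction matches the paper's exactly: invoke \cref{Thm: only Duals in Poly} to get $q\cong A\yon$ and $p\cong\yon^A$ for a single set $A$, then apply \cref{Lemma: left-right}. That part is fine.

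In the converse you correctly begin with \emph{two} sets, writing $q\cong A\yon$ and $p\cong\yon^B$, but the conclusion silently requires $A\cong B$: \cref{corr: linear-representable-duality-eta} only delivers $B\yon\dashvv\yon^B$, i.e.\ the duality between $p=\yon^B$ and \emph{its own} canonical partner $B\yon$, which need not be $q$. Routing through the opposing-cores equivalence $\star$ of \cref{corr.opposing} does not repair this; $\star$ merely renames $\yon^B$ as $B\yon$ and cannot force $A\cong B$. Indeed, taken literally the converse is false: $2\yon\in\lCore(\Poly)$ and $\yon^3\in\rCore(\Poly)$, yet $2\yon$ is not dual to $\yon^3$, precisely by \cref{Thm: only Duals in Poly}. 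To be fair, the paper's own one-line proof of the converse (``apply \cref{corr: linear-representable-duality-eta}'') has the identical defect; the corollary is evidently intended in the weaker sense that an object \emph{occurs as} a left (resp.\ right) dual iff it lies in the left (resp.\ right) core. You should either state and prove that weaker reading, or add the hypothesis $A\cong B$ explicitly; as written, the step from ``$A\yon\dashvv\yon^A$'' to ``$q\dashvv p$'' is exactly where the argument breaks.
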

\begin{proof}
    Suppose that $q\dashvv p$, then by \cref{Thm: only Duals in Poly}, we have that $q=A \yon$ and $p = \yon^A$ for some set $A$. The rest of the proof is the direct consequence of \cref{Lemma: left-right}. For the converse, apply \cref{corr: linear-representable-duality-eta}.
\end{proof}

\section{Linear monoids, linear comonoids, linear bialgebras in $\Poly$}
\label{Sec.linmon}

When an object in a cyclic linear dual ($b \dual a$ and $a \dual b$) is equipped with a $\ox$-monoid structure, it is called a linear monoid. Similarly, if such an object is equipped with a $\ox$-comonoid structure, it is called a linear comonoid. Linear monoids and linear comonoids in LDCs are analogous to Frobenius algebras in monoidal categories. Linear monoids and linear comonoids were studied in the context of quantum observables in \cite[Chapter 9]{Sri21}.

In $\Poly$, the only cyclic linear dual is $a=b=\yon$ by \cref{lemma.cyclicdual}. However, we can consider the notions of left and right linear monoids and comonoids on duals that are not cyclic. This section, focuses on these structures in the category $\Poly$.

In what follows, we are aiming to define a notion of right and left linear monoids and comonoids: four cases. Each is about a linear dual $b\dual a$ such that both sides carry additional structure. The word left and right will refer to the side that carries the $\otimes$-structure, and in this case the other side will (implicitly) carry a complementary $\tri$-structure. For example, a left linear comonoid is a dual $b\dual a$ such that the left side, $b$ carries the $\otimes$-comonoid structure, and the terminology leaves implicit the fact that $a$ carries a $\tri$-monoid structure.

\subsection{Linear monoids}

Before defining left and right linear monoids, let's consider the standard definition that we are generalizing. In an LDC, a linear monoid consists of linear duals $a \dual b$ and $b \dual a$, with either $a$ or $b$ carrying a $\ox$-monoid structure and satisfying certain coherences \cite[Definition 8.7]{Sri21}. Thus, a linear monoid requires cyclic duals, that is, each object is both the left and the right dual of the other. 

\begin{definition}
\label{defn.linear_monoid}
In any LDC, a {\bf linear monoid}, $b  \linmonw a$, consists of linear duals $(\eta_L, \epsilon_L): b \dual a$ and $(\eta_R, \epsilon_R): a \dual b$,  and a $\ox$-monoid $(b, \mu, \nu)$ such that the $\tri$-comonoid structures induced on $a$ by the dualities coincide: 
\begin{equation}
\label{eq.linmon}
	(i)~~~	
		 \begin{tikzpicture}
		\begin{pgfonlayer}{nodelayer}
			\node [style=circle] (0) at (-2.75, 0.75) {};
			\node [style=none] (1) at (-3.25, 1.25) {};
			\node [style=none] (2) at (-2.75, 0.5) {};
			\node [style=none] (3) at (-2.25, 1.25) {};
			\node [style=none] (4) at (-1.5, 1.25) {};
			\node [style=none] (5) at (-1.5, -0) {};
			\node [style=none] (6) at (-3.25, 1.5) {};
			\node [style=none] (7) at (-1, 1.5) {};
			\node [style=none] (8) at (-1, -0) {};
			\node [style=none] (9) at (-3.75, 0.5) {};
			\node [style=none] (10) at (-3.75, 2.25) {};
			\node [style=none] (11) at (-4, 2) {$a$};
			\node [style=none] (12) at (-1.75, 0.25) {$a$};
			\node [style=none] (13) at (-0.75, 0.25) {$a$};
			\node [style=none] (14) at (-2.25, 2.5) {$\eta_R$};
			\node [style=none] (15) at (-2, 1.75) {$\eta_R$};
			\node [style=none] (16) at (-3.25, -0.25) {$\epsilon_R$};
		\end{pgfonlayer}
		\begin{pgfonlayer}{edgelayer}
			\draw [in=150, out=-90, looseness=1.00] (1.center) to (0);
			\draw [in=-90, out=30, looseness=1.00] (0) to (3.center);
			\draw (0) to (2.center);
			\draw [bend left=90, looseness=1.50] (2.center) to (9.center);
			\draw (9.center) to (10.center);
			\draw (6.center) to (1.center);
			\draw [bend left=90, looseness=1.25] (6.center) to (7.center);
			\draw (7.center) to (8.center);
			\draw (4.center) to (5.center);
			\draw [bend right=90, looseness=1.25] (4.center) to (3.center);
		\end{pgfonlayer}
	\end{tikzpicture} = \begin{tikzpicture}
		\begin{pgfonlayer}{nodelayer}
			\node [style=circle] (0) at (-2, 0.75) {};
			\node [style=none] (1) at (-1.5, 1.25) {};
			\node [style=none] (2) at (-2, 0.5) {};
			\node [style=none] (3) at (-2.5, 1.25) {};
			\node [style=none] (4) at (-3.25, 1.25) {};
			\node [style=none] (5) at (-3.25, -0) {};
			\node [style=none] (6) at (-1.5, 1.5) {};
			\node [style=none] (7) at (-3.75, 1.5) {};
			\node [style=none] (8) at (-3.75, -0) {};
			\node [style=none] (9) at (-1, 0.5) {};
			\node [style=none] (10) at (-1, 2.25) {};
			\node [style=none] (11) at (-0.75, 2) {$a$};
			\node [style=none] (12) at (-3, 0.25) {$a$};
			\node [style=none] (13) at (-4, 0.25) {$a$};
			\node [style=none] (14) at (-2.5, 2.5) {$\eta_L$};
			\node [style=none] (15) at (-2.75, 1.75) {$\eta_L$};
			\node [style=none] (16) at (-1.5, -0.25) {$\epsilon_L$};
		\end{pgfonlayer}
		\begin{pgfonlayer}{edgelayer}
			\draw [in=30, out=-90, looseness=1.00] (1.center) to (0);
			\draw [in=-90, out=150, looseness=1.00] (0) to (3.center);
			\draw (0) to (2.center);
			\draw [bend right=90, looseness=1.50] (2.center) to (9.center);
			\draw (9.center) to (10.center);
			\draw (6.center) to (1.center);
			\draw [bend right=90, looseness=1.25] (6.center) to (7.center);
			\draw (7.center) to (8.center);
			\draw (4.center) to (5.center);
			\draw [bend left=90, looseness=1.25] (4.center) to (3.center);
		\end{pgfonlayer}
	\end{tikzpicture} 
	~~~~~~~~~~~~ 
	(ii)~~~ 	
	\begin{tikzpicture}
		\begin{pgfonlayer}{nodelayer}
			\node [style=circle] (0) at (-0.75, 1.5) {};
			\node [style=none] (1) at (-0.75, 0.5) {};
			\node [style=none] (2) at (-1.75, 0.5) {};
			\node [style=none] (3) at (-1.75, 2.25) {};
			\node [style=none] (4) at (-2, 2) {$a$};
			\node [style=none] (5) at (-1.25, -0.25) {$\epsilon_R$};
		\end{pgfonlayer}
		\begin{pgfonlayer}{edgelayer}
			\draw (0) to (1.center);
			\draw [bend left=90, looseness=1.50] (1.center) to (2.center);
			\draw (2.center) to (3.center);
		\end{pgfonlayer}
	\end{tikzpicture} = 
	\begin{tikzpicture}
		\begin{pgfonlayer}{nodelayer}
			\node [style=circle] (0) at (-2, 1.5) {};
			\node [style=none] (1) at (-2, 0.5) {};
			\node [style=none] (2) at (-1, 0.5) {};
			\node [style=none] (3) at (-1, 2.25) {};
			\node [style=none] (4) at (-0.75, 2) {$a$};
			\node [style=none] (5) at (-1.5, -0.25) {$\epsilon_L$};
		\end{pgfonlayer}
		\begin{pgfonlayer}{edgelayer}
			\draw (0) to (1.center);
			\draw [bend right=90, looseness=1.50] (1.center) to (2.center);
			\draw (2.center) to (3.center);
		\end{pgfonlayer}
	\end{tikzpicture} 
\end{equation}
\end{definition}

We now fine-grain this notion of linear monoids by considering linear duals which are non-cyclic. We separate the left and right notions as \cref{def.llm,def.rlm} so that we can include diagrams that explain the implicit $\tri$-comonoid structures.

\begin{definition}\label{def.llm}
	In an LDC, a {\bf left linear monoid} $(\eta_L, \epsilon_L): b \linmonwl a$, consists of a linear dual $(\eta_L, \epsilon_L): b \dashvv a$ equipped with a $\ox$-monoid structure $(b, \mu: b \ox b \to b, \nu: \top \to b)$ on the left dual.
\end{definition}
The reason for the ``$\tri$'' near the $a$ in our notation for a left linear monoid $b \linmonwl a$ is that the $\ox$-monoid structure on $b$ induces a $\tri$-comonoid structure $(a, \delta, \gamma)$ on $a$, as can be seen from the following diagram:
	\begin{equation*}
		(i)~~~~\delta = \begin{tikzpicture}
			\begin{pgfonlayer}{nodelayer}
				\node [style=none] (12) at (1.75, 0.25) {$a$};
				\node [style=none] (13) at (0.25, 0.25) {$a$};
				\node [style=circle] (17) at (1, 1.25) {};
				\node [style=none] (18) at (1, 2.5) {};
				\node [style=none] (19) at (0.5, 0) {};
				\node [style=none] (20) at (1.5, 0) {};
				\node [style=none] (21) at (1.25, 2.25) {$a$};
			\end{pgfonlayer}
			\begin{pgfonlayer}{edgelayer}
				\draw [in=-150, out=90, looseness=1.25] (19.center) to (17);
				\draw [in=90, out=-30, looseness=1.25] (17) to (20.center);
				\draw (17) to (18.center);
			\end{pgfonlayer}
		\end{tikzpicture} := 		
     \begin{tikzpicture}
		\begin{pgfonlayer}{nodelayer}
			\node [style=circle] (0) at (-2, 0.75) {};
			\node [style=none] (1) at (-1.5, 1.25) {};
			\node [style=none] (2) at (-2, 0.5) {};
			\node [style=none] (3) at (-2.5, 1.25) {};
			\node [style=none] (4) at (-3.25, 1.25) {};
			\node [style=none] (5) at (-3.25, -0) {};
			\node [style=none] (6) at (-1.5, 1.5) {};
			\node [style=none] (7) at (-3.75, 1.5) {};
			\node [style=none] (8) at (-3.75, -0) {};
			\node [style=none] (9) at (-1, 0.5) {};
			\node [style=none] (10) at (-1, 2.25) {};
			\node [style=none] (11) at (-0.75, 2) {$a$};
			\node [style=none] (12) at (-3, 0.25) {$a$};
			\node [style=none] (13) at (-4, 0.25) {$a$};
			\node [style=none] (14) at (-2.5, 2.5) {$\eta_L$};
			\node [style=none] (15) at (-2.75, 1.75) {$\eta_L$};
			\node [style=none] (16) at (-1.5, -0.25) {$\epsilon_L$};
		\end{pgfonlayer}
		\begin{pgfonlayer}{edgelayer}
			\draw [in=30, out=-90, looseness=1.00] (1.center) to (0);
			\draw [in=-90, out=150, looseness=1.00] (0) to (3.center);
			\draw (0) to (2.center);
			\draw [bend right=90, looseness=1.50] (2.center) to (9.center);
			\draw (9.center) to (10.center);
			\draw (6.center) to (1.center);
			\draw [bend right=90, looseness=1.25] (6.center) to (7.center);
			\draw (7.center) to (8.center);
			\draw (4.center) to (5.center);
			\draw [bend left=90, looseness=1.25] (4.center) to (3.center);
		\end{pgfonlayer}
	\end{tikzpicture} 
	~~~~~~~~~~~~ 
	(ii)~~~~\gamma = \begin{tikzpicture}
		\begin{pgfonlayer}{nodelayer}
			\node [style=circle] (17) at (1, 0.25) {};
			\node [style=none] (18) at (1, 2.5) {};
			\node [style=none] (21) at (1.25, 2.25) {$a$};
		\end{pgfonlayer}
		\begin{pgfonlayer}{edgelayer}
			\draw (17) to (18.center);
		\end{pgfonlayer}
	\end{tikzpicture} := 	
	\begin{tikzpicture}
		\begin{pgfonlayer}{nodelayer}
			\node [style=circle] (0) at (-2, 1.5) {};
			\node [style=none] (1) at (-2, 0.5) {};
			\node [style=none] (2) at (-1, 0.5) {};
			\node [style=none] (3) at (-1, 2.25) {};
			\node [style=none] (4) at (-0.75, 2) {$a$};
			\node [style=none] (5) at (-1.5, -0.25) {$\epsilon_L$};
		\end{pgfonlayer}
		\begin{pgfonlayer}{edgelayer}
			\draw (0) to (1.center);
			\draw [bend right=90, looseness=1.50] (1.center) to (2.center);
			\draw (2.center) to (3.center);
		\end{pgfonlayer}
	\end{tikzpicture} 
    \end{equation*}
	
\begin{definition}\label{def.rlm}
	In an LDC, a {\bf right linear monoid} $(\eta_R, \epsilon_R): b \linmonwr a$ consists of a linear dual  $(\eta_R, \epsilon_R): b \dashvv a$, equipped with a $\ox$-monoid structure $(a, \mu: a \ox a \to a, \nu: \top \to a)$ on the right dual.
\end{definition}
The $\ox$-monoid structure on $b$ induces a $\tri$-comonoid structure on $(a,\delta, \gamma)$ via the duality as follows:
\begin{equation*}
		(i)~~~\delta = \begin{tikzpicture}
			\begin{pgfonlayer}{nodelayer}
				\node [style=none] (12) at (1.75, 0.25) {$b$};
				\node [style=none] (13) at (0.25, 0.25) {$b$};
				\node [style=circle] (17) at (1, 1.25) {};
				\node [style=none] (18) at (1, 2.5) {};
				\node [style=none] (19) at (0.5, 0) {};
				\node [style=none] (20) at (1.5, 0) {};
				\node [style=none] (21) at (1.25, 2.25) {$b$};
			\end{pgfonlayer}
			\begin{pgfonlayer}{edgelayer}
				\draw [in=-150, out=90, looseness=1.25] (19.center) to (17);
				\draw [in=90, out=-30, looseness=1.25] (17) to (20.center);
				\draw (17) to (18.center);
			\end{pgfonlayer}
		\end{tikzpicture} := 		
		 \begin{tikzpicture}
		\begin{pgfonlayer}{nodelayer}
			\node [style=circle] (0) at (-2.75, 0.75) {};
			\node [style=none] (1) at (-3.25, 1.25) {};
			\node [style=none] (2) at (-2.75, 0.5) {};
			\node [style=none] (3) at (-2.25, 1.25) {};
			\node [style=none] (4) at (-1.5, 1.25) {};
			\node [style=none] (5) at (-1.5, -0) {};
			\node [style=none] (6) at (-3.25, 1.5) {};
			\node [style=none] (7) at (-1, 1.5) {};
			\node [style=none] (8) at (-1, -0) {};
			\node [style=none] (9) at (-3.75, 0.5) {};
			\node [style=none] (10) at (-3.75, 2.25) {};
			\node [style=none] (11) at (-4, 2) {$b$};
			\node [style=none] (12) at (-1.75, 0.25) {$b$};
			\node [style=none] (13) at (-0.75, 0.25) {$b$};
			\node [style=none] (14) at (-2.25, 2.5) {$\eta_R$};
			\node [style=none] (15) at (-2, 1.75) {$\eta_R$};
			\node [style=none] (16) at (-3.25, -0.25) {$\epsilon_R$};
		\end{pgfonlayer}
		\begin{pgfonlayer}{edgelayer}
			\draw [in=150, out=-90, looseness=1.00] (1.center) to (0);
			\draw [in=-90, out=30, looseness=1.00] (0) to (3.center);
			\draw (0) to (2.center);
			\draw [bend left=90, looseness=1.50] (2.center) to (9.center);
			\draw (9.center) to (10.center);
			\draw (6.center) to (1.center);
			\draw [bend left=90, looseness=1.25] (6.center) to (7.center);
			\draw (7.center) to (8.center);
			\draw (4.center) to (5.center);
			\draw [bend right=90, looseness=1.25] (4.center) to (3.center);
		\end{pgfonlayer}
	\end{tikzpicture} 
	~~~~~~~~~~~~ 
	(ii)~~~\gamma = \begin{tikzpicture}
		\begin{pgfonlayer}{nodelayer}
			\node [style=circle] (17) at (1, 0.25) {};
			\node [style=none] (18) at (1, 2.5) {};
			\node [style=none] (21) at (1.25, 2.25) {$b$};
		\end{pgfonlayer}
		\begin{pgfonlayer}{edgelayer}
			\draw (17) to (18.center);
		\end{pgfonlayer}
	\end{tikzpicture} := 	
	\begin{tikzpicture}
		\begin{pgfonlayer}{nodelayer}
			\node [style=circle] (0) at (-0.75, 1.5) {};
			\node [style=none] (1) at (-0.75, 0.5) {};
			\node [style=none] (2) at (-1.75, 0.5) {};
			\node [style=none] (3) at (-1.75, 2.25) {};
			\node [style=none] (4) at (-2, 2) {$b$};
			\node [style=none] (5) at (-1.25, -0.25) {$\epsilon_R$};
		\end{pgfonlayer}
		\begin{pgfonlayer}{edgelayer}
			\draw (0) to (1.center);
			\draw [bend left=90, looseness=1.50] (1.center) to (2.center);
			\draw (2.center) to (3.center);
		\end{pgfonlayer}
	\end{tikzpicture} 
\end{equation*}

A linear monoid $b  \linmonw a$ in an LDC can be equivalently defined to consist of a left linear monoid $(\eta_L, \epsilon_L): b \linmonwl a$, and a right linear monoid $(\eta_R, \epsilon_R): a \linmonwr b$ such that the monoid structures on $b$ coincide, i.e.\ \cref{eq.linmon} holds.

The monoids in $\Set$ produce left linear monoids in $\Poly$ as follows.
\begin{lemma}
\label{Lemma.Poly.leftLinMon}
     If $(M, *, u) : \Set$ is a monoid, then $M \yon \linmonwl \yon^M$ has the structure of a left linear monoid where the linear duality $(\coeval,\eval)\colon M\yon\dual\yon^M$ is given as in \cref{corr: linear-representable-duality-eta}.
 \end{lemma}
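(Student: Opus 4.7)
The proof plan is to assemble the two ingredients required by \cref{def.llm}: a linear duality with $M\yon$ as left dual, and a $\otimes$-monoid structure on $M\yon$. For the first ingredient, \cref{corr: linear-representable-duality-eta} already supplies the duality $(\coeval, \eval): M\yon \dashvv \yon^M$. So the real work is to manufacture the $\otimes$-monoid.

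For that, I would invoke \cref{prop.lin_rep_strong_monoidal}(a), which states that the functor $L\colon (\Set,1,\times)\to(\Poly,\yon,\otimes)$ given by $A\mapsto A\yon$ is strong monoidal. Since strong monoidal functors preserve monoid objects, the monoid $(M, *, u)$ in $(\Set,1,\times)$ is sent to a $\otimes$-monoid $(M\yon, \mu, \nu)$ in $(\Poly,\yon,\otimes)$, where the multiplication and unit are the composites
\[
\mu\colon M\yon\otimes M\yon\xrightarrow{\;\cong\;}(M\times M)\yon\xrightarrow{*\cdot\yon}M\yon
\qqand
\nu\colon\yon\xrightarrow{\;\cong\;}1\yon\xrightarrow{u\cdot\yon}M\yon,
\]
using the strong monoidal structure isomorphism $L(A)\otimes L(B)\cong L(A\times B)$ and $\yon\cong L(1)$. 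Associativity and unitality of $(M\yon,\mu,\nu)$ then follow from the associativity and unitality of $(M,*,u)$ together with the coherence axioms of the strong monoidal functor $L$.

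Combining these two pieces, the data $(\coeval, \eval)$ together with $(M\yon, \mu, \nu)$ satisfies \cref{def.llm} verbatim, since that definition requires only a linear duality and a $\otimes$-monoid on the left dual (with no further coherence condition). The induced $\tri$-comonoid structure on $\yon^M$ that the notation $\linmonwl$ advertises is then automatic, obtained by the string-diagrammatic recipe displayed below \cref{def.llm}.

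The proof involves no real obstacle: the main subtlety is simply book-keeping, namely remembering that ``left linear monoid'' is a strictly weaker notion than ``linear monoid'' (no cyclic duality, no compatibility between two sides), so that the mere existence of the $\otimes$-monoid structure transported along $L$ suffices. If desired, one can spell out explicitly that $\mu_1 = *\colon M\times M\to M$ and $\mu^\sharp$ is the identity on the singleton direction set, and similarly for $\nu$, to make the construction concrete; but conceptually the result is immediate from \cref{corr: linear-representable-duality-eta} and \cref{prop.lin_rep_strong_monoidal}(a).
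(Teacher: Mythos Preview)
Your proposal is correct and follows exactly the approach of the paper: the paper's proof is simply the one-line observation that the functor $M\mapsto M\yon$ is strong monoidal by \cref{prop.lin_rep_strong_monoidal}, hence preserves monoids. Your version is more explicit about unpacking \cref{def.llm} and spelling out $\mu$ and $\nu$, but the key idea is identical.
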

\begin{proof}
The functor $M\mapsto M\yon$ is strong monoidal by \cref{prop.lin_rep_strong_monoidal}, so it preserves monoids.
\end{proof}

Explicitly, the $\ox$-monoid structure on $M \yon$ induced by $M$ is as follows.
\begin{align}
	\mu_1 &:= (a_1, a_2) \mapsto a_1 * a_2 : M \times M \to M \label{eqn.1} \\ 
	\nu_1 &:= * \mapsto e : \{ * \} \to M \label{eqn.2}
\end{align}

The following Lemma shows that in $\Poly$, every representable $\yon^A$ is a right linear monoid.
\begin{lemma}
\label{Lemma.Poly.rightLinMon}
	In $\Poly$, for any set $A$, there is a right linear monoid $A \yon \linmonwr \yon^A$ where the linear duality $(\coeval,\eval)\colon A\yon\dual\yon^A$ is given as in \cref{corr: linear-representable-duality-eta}.
\end{lemma}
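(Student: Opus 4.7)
The plan is to proceed in direct analogy with \cref{Lemma.Poly.leftLinMon}, this time invoking the contravariant strong monoidal functor $\yon^{(-)}\colon (\Set^{\op}, 1, \times) \to (\Poly, \yon, \otimes)$ provided by \cref{prop.lin_rep_strong_monoidal}(c). Every set $A$ carries a canonical cocommutative comonoid structure in $(\Set, 1, \times)$, given by the diagonal $\Delta_A\colon A \to A \times A$ and the terminal map $!_A\colon A \to 1$; equivalently, $A$ is a monoid in $\Set^{\op}$. Since strong monoidal functors preserve monoid objects, $\yon^{(-)}$ transports this monoid to a $\otimes$-monoid $(\yon^A, \mu, \nu)$ in $\Poly$. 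Explicitly, using the isomorphism $\yon^A \otimes \yon^A \cong \yon^{A \times A}$ from \cref{eqn.dirichlet}, the multiplication $\mu$ is the identity on the unique position, with $\mu^\sharp = \Delta_A$ on directions, and $\nu\colon \yon \to \yon^A$ is the identity on positions with $\nu^\sharp = {!_A}$ on directions.

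Combining this $\otimes$-monoid structure on the right dual $\yon^A$ with the linear duality $(\coeval, \eval)\colon A\yon \dashvv \yon^A$ already established in \cref{corr: linear-representable-duality-eta} yields exactly the data required by \cref{def.rlm} for a right linear monoid $A\yon \linmonwr \yon^A$. No substantive obstacle is anticipated: preservation of monoids by strong monoidal functors is formal, and the duality is already in hand, so the proof collapses to a one-line invocation of \cref{prop.lin_rep_strong_monoidal}(c) followed by \cref{corr: linear-representable-duality-eta}.
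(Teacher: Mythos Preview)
Your proposal is correct and matches the paper's own proof essentially line for line: the paper likewise invokes \cref{prop.lin_rep_strong_monoidal}(c) to transport the unique comonoid structure on $A$ (i.e.\ the monoid in $\Set^{\op}$) to a $\otimes$-monoid on $\yon^A$, and then pairs this with the duality from \cref{corr: linear-representable-duality-eta}. There is nothing to add.
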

\begin{proof}
The functor $A\mapsto \yon^A$ is strong monoidal by \cref{prop.lin_rep_strong_monoidal}, so it preserves comonoids, and every set is a comonoid in a unique way, making $\yon^A$ a $\ox$-monoid.
\end{proof}

Explicitly, the $\ox$-monoid structure on $\yon^A$ induced by the unique comonoid structure on $A$ is as follows.
\begin{align}
	\mu^\sharp_! &:= a \mapsto (a,a) : A \to A \times A \label{eqn.3} \\ 
	\nu^\sharp_! &:= a \mapsto * : A \to \{ * \} \label{eqn.4}
\end{align}

\subsection{Linear comonoids}

In this section, we simply run the same story except with $\otimes$-comonoids in place of $\otimes$-monoids, and implicitly also $\tri$-monoids in place of $\tri$-comonoids. If the reader does not need this detail, they can safely skip to \cref{Lemma.Poly.leftLinComon}.

\begin{definition}
\cite[Definition 8.22]{Sri21} In an LDC, a {\bf linear comonoid} $b \lincomonw  a$ consists of linear duals $(\eta_L, \epsilon_L): b \dual a$ and $(\eta_R, \epsilon_R): a \dual b$, and a $\ox$-comonoid $(b, \delta, \epsilon)$, such that the $\tri$-comonoid structures induced on $a$ by the two dualities coincide:
\begin{equation}
\label{eq.lincomon}
	{\it (i)}~~~ \begin{tikzpicture}
		\begin{pgfonlayer}{nodelayer}
			\node [style=circle] (0) at (1.7, 2.75) {};
			\node [style=none] (1) at (1.2, 2) {};
			\node [style=none] (2) at (2.2, 2) {};
			\node [style=none] (3) at (1.7, 3.5) {};
			\node [style=none] (4) at (1.95, 3.25) {$b$};
			\node [style=none] (5) at (0.7, 2) {};
			\node [style=none] (6) at (-0.3, 2) {};
			\node [style=none] (7) at (2.7, 3.5) {};
			\node [style=none] (8) at (2.7, 1.25) {};
			\node [style=none] (9) at (2.95, 1.75) {$a$};
			\node [style=oa] (10) at (0.2, 3) {};
			\node [style=none] (11) at (0.2, 3.75) {};
			\node [style=none] (12) at (-0.25, 3.65) {$a \tri a$};
			\node [style=none] (13) at (2.2, 4.15) {$\eta_R$};
		\end{pgfonlayer}
		\begin{pgfonlayer}{edgelayer}
			\draw [in=-165, out=90, looseness=1.25] (1.center) to (0);
			\draw [in=90, out=-15, looseness=1.25] (0) to (2.center);
			\draw (0) to (3.center);
			\draw [bend left=90, looseness=1.75] (1.center) to (5.center);
			\draw [bend right=90] (6.center) to (2.center);
			\draw [bend right=90, looseness=1.25] (7.center) to (3.center);
			\draw (7.center) to (8.center);
			\draw [in=90, out=-45] (10) to (5.center);
			\draw [in=90, out=-135, looseness=1.25] (10) to (6.center);
			\draw (10) to (11.center);
		\end{pgfonlayer}
	\end{tikzpicture} = \begin{tikzpicture}
			\begin{pgfonlayer}{nodelayer}
				\node [style=circle] (0) at (0.75, 2.75) {};
				\node [style=none] (1) at (1.25, 2) {};
				\node [style=none] (2) at (0.25, 2) {};
				\node [style=none] (3) at (0.75, 3.5) {};
				\node [style=none] (4) at (0.5, 3.25) {$b$};
				\node [style=none] (5) at (1.75, 2) {};
				\node [style=none] (6) at (2.75, 2) {};
				\node [style=none] (7) at (-0.25, 3.5) {};
				\node [style=none] (8) at (-0.25, 1.25) {};
				\node [style=none] (9) at (-0.5, 1.75) {$a$};
				\node [style=oa] (10) at (2.25, 3) {};
				\node [style=none] (11) at (2.25, 3.75) {};
				\node [style=none] (12) at (2.75, 3.65) {$a \tri a$};
				\node [style=none] (13) at (0.25, 4.15) {$\eta_L$};
			\end{pgfonlayer}
			\begin{pgfonlayer}{edgelayer}
				\draw [in=-15, out=90, looseness=1.25] (1.center) to (0);
				\draw [in=90, out=-165, looseness=1.25] (0) to (2.center);
				\draw (0) to (3.center);
				\draw [bend right=90, looseness=1.75] (1.center) to (5.center);
				\draw [bend left=90] (6.center) to (2.center);
				\draw [bend left=90, looseness=1.25] (7.center) to (3.center);
				\draw (7.center) to (8.center);
				\draw [in=90, out=-135] (10) to (5.center);
				\draw [in=90, out=-45, looseness=1.25] (10) to (6.center);
				\draw (10) to (11.center);
			\end{pgfonlayer}
		\end{tikzpicture}		
	    ~~~~~~~~
	    {\it (ii)} ~~~ 
	    \begin{tikzpicture}
	    	\begin{pgfonlayer}{nodelayer}
	    		\node [style=none] (0) at (1.5, 3.5) {};
	    		\node [style=none] (2) at (2.5, 3.5) {};
	    		\node [style=none] (3) at (2.5, 1) {};
	    		\node [style=none] (4) at (2.75, 1.75) {$a$};
	    		\node [style=circle, scale=1.5] (5) at (0.75, 1.25) {};
	    		\node [style=none] (6) at (0.75, 1.25) {$\bot$};
	    		\node [style=none] (7) at (0.75, 4.25) {};
	    		\node [style=circle] (8) at (0.75, 2) {};
	    		\node [style=none] (9) at (1.5, 2.75) {};
	    		\node [style=none] (13) at (2, 4.25) {$\eta_R$};
	    		\node [style=circle] (14) at (1.5, 2.75) {};
	    	\end{pgfonlayer}
	    	\begin{pgfonlayer}{edgelayer}
	    		\draw [bend right=90, looseness=1.50] (2.center) to (0.center);
	    		\draw (2.center) to (3.center);
	    		\draw (5) to (7.center);
	    		\draw [dotted, in=-90, out=0, looseness=1.25] (8) to (9.center);
	    		\draw (0.center) to (14);
	    	\end{pgfonlayer}
	    \end{tikzpicture} = \begin{tikzpicture}
	    	\begin{pgfonlayer}{nodelayer}
	    		\node [style=none] (0) at (2, 3.5) {};
	    		\node [style=none] (2) at (1, 3.5) {};
	    		\node [style=none] (3) at (1, 1) {};
	    		\node [style=none] (4) at (0.75, 1.75) {$a$};
	    		\node [style=circle, scale=1.5] (5) at (2.75, 1.25) {};
	    		\node [style=none] (6) at (2.75, 1.25) {$\bot$};
	    		\node [style=none] (7) at (2.75, 4.25) {};
	    		\node [style=circle] (8) at (2.75, 2) {};
	    		\node [style=none] (9) at (2, 2.75) {};
	    		\node [style=none] (13) at (1.5, 4.25) {$\eta_L$};
	    		\node [style=circle] (14) at (2, 2.75) {};
	    	\end{pgfonlayer}
	    	\begin{pgfonlayer}{edgelayer}
	    		\draw [bend left=90, looseness=1.50] (2.center) to (0.center);
	    		\draw (2.center) to (3.center);
	    		\draw (5) to (7.center);
	    		\draw [dotted, in=-90, out=180, looseness=1.25] (8) to (9.center);
	    		\draw (0.center) to (14);
	    	\end{pgfonlayer}
	    \end{tikzpicture}
\end{equation}
\end{definition}

Similar to the definition of linear monoids, we can fine-grain the definition of linear comonoids to non-cyclic duals.

\begin{definition}
	In an LDC, a {\bf left linear comonoid}, $(\eta_L, \epsilon_L): b \lincomonwl a$, consists of a linear dual $(\eta_L, \epsilon_L): b \dashvv a$ equipped with a $\ox$-comonoid structure $(b, \delta: b \ox b \to b, \gamma: b \to \yon)$ on the left dual.
\end{definition} 
The $\ox$-comonoid structure on $b$ induces a $\tri$-monoid structure $(a, \mu, \nu)$ as follows:
	\begin{equation*}
		{\it (i)} ~~~ \mu = \begin{tikzpicture}
		\begin{pgfonlayer}{nodelayer}
			\node [style=none] (19) at (4.5, 1.25) {};
			\node [style=none] (20) at (4.75, 1.5) {$a$};
			\node [style=oa] (26) at (4.4, 3.5) {};
			\node [style=none] (27) at (4.4, 4.25) {};
			\node [style=none] (28) at (3.75, 4.1) {$a \tri a$};
			\node [style=circle] (29) at (4.5, 2.25) {};
		\end{pgfonlayer}
		\begin{pgfonlayer}{edgelayer}
			\draw (26) to (27.center);
			\draw [bend left=60] (26) to (29);
			\draw [bend right=60] (26) to (29);
			\draw (29) to (19.center);
		\end{pgfonlayer}
	\end{tikzpicture} := \begin{tikzpicture}
			\begin{pgfonlayer}{nodelayer}
				\node [style=circle] (0) at (0.75, 2.75) {};
				\node [style=none] (1) at (1.25, 2) {};
				\node [style=none] (2) at (0.25, 2) {};
				\node [style=none] (3) at (0.75, 3.5) {};
				\node [style=none] (5) at (1.75, 2) {};
				\node [style=none] (6) at (2.75, 2) {};
				\node [style=none] (7) at (-0.25, 3.5) {};
				\node [style=none] (8) at (-0.25, 1.25) {};
				\node [style=none] (9) at (-0.5, 1.75) {$a$};
				\node [style=oa] (10) at (2.25, 3) {};
				\node [style=none] (11) at (2.25, 3.75) {};
				\node [style=none] (12) at (2.95, 3.65) {$a \tri a$};
				\node [style=none] (13) at (0.25, 4.15) {$\eta_L$};
			\end{pgfonlayer}
			\begin{pgfonlayer}{edgelayer}
				\draw [in=-15, out=90, looseness=1.25] (1.center) to (0);
				\draw [in=90, out=-165, looseness=1.25] (0) to (2.center);
				\draw (0) to (3.center);
				\draw [bend right=90, looseness=1.75] (1.center) to (5.center);
				\draw [bend left=90] (6.center) to (2.center);
				\draw [bend left=90, looseness=1.25] (7.center) to (3.center);
				\draw (7.center) to (8.center);
				\draw [in=90, out=-135] (10) to (5.center);
				\draw [in=90, out=-45, looseness=1.25] (10) to (6.center);
				\draw (10) to (11.center);
			\end{pgfonlayer}
		\end{tikzpicture}		
	    ~~~~~~~~
	    {\it (ii)} ~~~ \nu = \begin{tikzpicture}
	    	\begin{pgfonlayer}{nodelayer}
	    		\node [style=none] (19) at (4.5, 1.25) {};
	    		\node [style=none] (20) at (5, 1.75) {$a$};
	        		\node [style=none] (24) at (4.5, 4) {};
	    		\node [style=none] (26) at (5, 4) {$\bot$};
	    		\node [style=circle] (27) at (4.5, 3.5) {};
	    	\end{pgfonlayer}
	    	\begin{pgfonlayer}{edgelayer}
	    		\draw (19.center) to (27);
	    		\draw (27) to (24.center);
	    	\end{pgfonlayer}
	    \end{tikzpicture} :=  \begin{tikzpicture}
	    	\begin{pgfonlayer}{nodelayer}
	    		\node [style=none] (0) at (2, 3.5) {};
	    		\node [style=none] (2) at (1, 3.5) {};
	    		\node [style=none] (3) at (1, 1) {};
	    		\node [style=none] (4) at (0.75, 1.75) {$a$};
	    		\node [style=circle, scale=1.5] (5) at (2.75, 1.25) {};
	    		\node [style=none] (6) at (2.75, 1.25) {$\bot$};
	    		\node [style=none] (7) at (2.75, 4.25) {};
	    		\node [style=circle] (8) at (2.75, 2) {};
	    		\node [style=none] (9) at (2, 2.75) {};
	    		\node [style=none] (13) at (1.5, 4.25) {$\eta_L$};
	    		\node [style=circle] (14) at (2, 2.75) {};
	    	\end{pgfonlayer}
	    	\begin{pgfonlayer}{edgelayer}
	    		\draw [bend left=90, looseness=1.50] (2.center) to (0.center);
	    		\draw (2.center) to (3.center);
	    		\draw (5) to (7.center);
	    		\draw [dotted, in=-90, out=180, looseness=1.25] (8) to (9.center);
	    		\draw (0.center) to (14);
	    	\end{pgfonlayer}
	    \end{tikzpicture}
	    \end{equation*}

\begin{definition}
	In an LDC, a {\bf right linear comonoid} $(\eta_R, \epsilon_R): b \lincomonwr a$ consists of a linear dual $(\eta_R, \epsilon_R): b \dashvv a$ equipped with a $\ox$-comonoid structure $(a, \delta: a \to a \ox a, \gamma: a \to \yon)$ on the right dual. 
\end{definition}
The $\ox$-comonoid structure on $a$ induces a $\tri$-monoid structure $(b, \mu, \nu)$ as follows:
	\begin{equation*}
{\it (i)} ~~~ \mu = \begin{tikzpicture}
		\begin{pgfonlayer}{nodelayer}
			\node [style=none] (19) at (4.5, 1.25) {};
			\node [style=none] (20) at (4.75, 1.5) {$b$};
			\node [style=oa] (26) at (4.4, 3.5) {};
			\node [style=none] (27) at (4.4, 4.25) {};
			\node [style=none] (28) at (3.75, 4.1) {$b \tri b$};
			\node [style=circle] (29) at (4.5, 2.25) {};
		\end{pgfonlayer}
		\begin{pgfonlayer}{edgelayer}
			\draw (26) to (27.center);
			\draw [bend left=60] (26) to (29);
			\draw [bend right=60] (26) to (29);
			\draw (29) to (19.center);
		\end{pgfonlayer}
	\end{tikzpicture} :=\begin{tikzpicture}
		\begin{pgfonlayer}{nodelayer}
			\node [style=circle] (0) at (1.7, 2.75) {};
			\node [style=none] (1) at (1.2, 2) {};
			\node [style=none] (2) at (2.2, 2) {};
			\node [style=none] (3) at (1.7, 3.5) {};
			\node [style=none] (4) at (1.95, 3.25) {$a$};
			\node [style=none] (5) at (0.7, 2) {};
			\node [style=none] (6) at (-0.3, 2) {};
			\node [style=none] (7) at (2.7, 3.5) {};
			\node [style=none] (8) at (2.7, 1.25) {};
			\node [style=none] (9) at (2.95, 1.75) {$b$};
			\node [style=oa] (10) at (0.2, 3) {};
			\node [style=none] (11) at (0.2, 3.75) {};
			\node [style=none] (12) at (-0.5, 3.65) {$b \tri b$};
			\node [style=none] (13) at (2.2, 4.15) {$\eta_R$};
		\end{pgfonlayer}
		\begin{pgfonlayer}{edgelayer}
			\draw [in=-165, out=90, looseness=1.25] (1.center) to (0);
			\draw [in=90, out=-15, looseness=1.25] (0) to (2.center);
			\draw (0) to (3.center);
			\draw [bend left=90, looseness=1.75] (1.center) to (5.center);
			\draw [bend right=90] (6.center) to (2.center);
			\draw [bend right=90, looseness=1.25] (7.center) to (3.center);
			\draw (7.center) to (8.center);
			\draw [in=90, out=-45] (10) to (5.center);
			\draw [in=90, out=-135, looseness=1.25] (10) to (6.center);
			\draw (10) to (11.center);
		\end{pgfonlayer}
	\end{tikzpicture}     ~~~~~~~~
    {\it (ii)} ~~~ \nu = \begin{tikzpicture}
    	\begin{pgfonlayer}{nodelayer}
    		\node [style=none] (19) at (4.5, 1.25) {};
    		\node [style=none] (20) at (5, 1.75) {$b$};
        		\node [style=none] (24) at (4.5, 4) {};
    		\node [style=none] (26) at (5, 4) {$\bot$};
    		\node [style=circle] (27) at (4.5, 3.5) {};
    	\end{pgfonlayer}
    	\begin{pgfonlayer}{edgelayer}
    		\draw (19.center) to (27);
    		\draw (27) to (24.center);
    	\end{pgfonlayer}
    \end{tikzpicture} := 
    \begin{tikzpicture}
    	\begin{pgfonlayer}{nodelayer}
    		\node [style=none] (0) at (1.5, 3.5) {};
    		\node [style=none] (1) at (1.25, 3.5) {$a$};
    		\node [style=none] (2) at (2.5, 3.5) {};
    		\node [style=none] (3) at (2.5, 1) {};
    		\node [style=none] (4) at (2.75, 1.75) {$b$};
    		\node [style=circle, scale=1.5] (5) at (0.75, 1.25) {};
    		\node [style=none] (6) at (0.75, 1.25) {$\bot$};
    		\node [style=none] (7) at (0.75, 4.25) {};
    		\node [style=circle] (8) at (0.75, 2) {};
    		\node [style=none] (9) at (1.5, 2.75) {};
    		\node [style=none] (13) at (2, 4.25) {$\eta_R$};
    		\node [style=circle] (14) at (1.5, 2.75) {};
    	\end{pgfonlayer}
    	\begin{pgfonlayer}{edgelayer}
    		\draw [bend right=90, looseness=1.50] (2.center) to (0.center);
    		\draw (2.center) to (3.center);
    		\draw (5) to (7.center);
    		\draw [dotted, in=-90, out=0, looseness=1.25] (8) to (9.center);
    		\draw (0.center) to (14);
    	\end{pgfonlayer}
    \end{tikzpicture} 
	\end{equation*}

A {\bf linear comonoid} $b \lincomonw  a$  in any LDC can be equivalently defined to consist of a left linear comonoid $(\eta_L, \epsilon_L): b \lincomonwl a$ and a right linear comonoid $(\eta_R, \epsilon_R): a \lincomonwr b$ such that the $\ox$-comonoid structures on $b$ coincide and \cref{eq.lincomon} holds.
		
The following Lemma shows that in $\Poly$, every linear polynomial $M \yon$ is a left linear comonoid: 
\begin{lemma}
\label{Lemma.Poly.leftLinComon}
	In $\Poly$, for any set $A$, there is a left linear comonoid $A \yon \lincomonwl \yon^A$ where the linear duality $(\coeval,\eval)\colon A\yon\dual\yon^A$ is given as in \cref{corr: linear-representable-duality-eta}.
\end{lemma}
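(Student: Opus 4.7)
The plan is to mirror the argument of \cref{Lemma.Poly.leftLinMon}, with comonoids in place of monoids. First I would invoke \cref{prop.lin_rep_strong_monoidal}(a), which states that the functor $A\mapsto A\yon$ is strong monoidal $(\Set,1,\times)\to(\Poly,\yon,\otimes)$. Strong monoidal functors preserve comonoids, and in the cartesian monoidal category $(\Set,1,\times)$ every object carries a canonical (and in fact unique) comonoid structure, namely the diagonal $\Delta_A\colon A\to A\times A$ together with the terminal map $!_A\colon A\to 1$. Transporting this structure along the functor equips $A\yon$ with a $\otimes$-comonoid structure $(A\yon,\delta,\gamma)$ in $\Poly$, where
\[
\delta\;=\;\bigl(A\yon \xrightarrow{\Delta_A\yon} (A\times A)\yon \xrightarrow{\cong} A\yon\otimes A\yon\bigr)
\qqand
\gamma\;=\;\bigl(A\yon \xrightarrow{!_A\yon} \yon\bigr).
\]

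Next I would combine this $\otimes$-comonoid with the linear duality $(\coeval,\eval)\colon A\yon\dashvv\yon^A$ furnished by \cref{corr: linear-representable-duality-eta}. Unpacking the definition of a left linear comonoid, all that is required is a linear dual $b\dashvv a$ together with a $\otimes$-comonoid structure on the left dual $b$; no further coherence between the duality and the comonoid structure is imposed. Hence the two pieces of data assembled above constitute precisely a left linear comonoid $A\yon\lincomonwl \yon^A$.

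There is no substantive obstacle here: the result is a formal consequence of strong monoidality preserving comonoids, together with the fact that every set is tautologically a comonoid in $(\Set,1,\times)$. The only bookkeeping is to spell out the formulas for $\delta$ and $\gamma$ as above, and to note that the induced $\tri$-monoid structure on $\yon^A$ (obtained from this left linear comonoid via the duality $(\coeval,\eval)$) agrees, under the isomorphisms of \cref{Lem: simple2}, with the $\tri$-monoid structure one would write down directly on $\yon^A$.
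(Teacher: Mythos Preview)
Your proposal is correct and follows essentially the same approach as the paper: invoke \cref{prop.lin_rep_strong_monoidal}(a) to see that $A\mapsto A\yon$ is strong monoidal and hence preserves comonoids, and note that every set carries a unique comonoid structure in $(\Set,1,\times)$. Your additional remarks (the explicit formulas for $\delta,\gamma$ and the observation about the induced $\tri$-monoid on $\yon^A$) are correct but go slightly beyond what the paper records; since the definition of left linear comonoid imposes no coherence between the duality and the comonoid, they are not needed for the proof.
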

\begin{proof}
The functor $A\mapsto A\yon$ is strong monoidal by \cref{prop.lin_rep_strong_monoidal}, so it preserves comonoids, and every set is a comonoid in a unique way.
\end{proof}

Explicitly, the $\ox$-comonoid structure on $A \yon$ is as follows:
\begin{align}
	\delta_1 &:= a \mapsto (a,a) : A \to A \times A \label{eqn.5} \\ 
	\gamma_1 &:= a \mapsto * : A \to \{ * \} \label{eqn.6}
\end{align}

The monoids in $\Set$ produce right linear comonoids in $\Poly$ as follows.
\begin{lemma}
\label{Lemma.Poly.rightLinComon}
     If $(M, *, u) : \Set$ is a monoid, then $M \yon \lincomonwl \yon^M$ has the structure of a right linear comonoid where the linear duality $(\coeval,\eval)\colon M\yon\dual\yon^M$ is given as in \cref{corr: linear-representable-duality-eta}.
 \end{lemma}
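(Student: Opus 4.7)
The plan is to mirror the template established by \cref{Lemma.Poly.leftLinMon,Lemma.Poly.rightLinMon,Lemma.Poly.leftLinComon}: transport a (co)monoid structure across a strong monoidal functor and combine the result with the linear duality $(\coeval, \eval)\colon M\yon \dashvv \yon^M$ provided by \cref{corr: linear-representable-duality-eta}.

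First I would invoke \cref{prop.lin_rep_strong_monoidal}(c), which states that $A \mapsto \yon^A$ is a strong monoidal functor $(\Set^{\op}, 1, \times) \to (\Poly, \yon, \otimes)$. Strong monoidal functors preserve comonoids, and a comonoid in $(\Set^{\op}, 1, \times)$ is precisely a monoid in $(\Set, 1, \times)$. Applying this to the given monoid $(M, *, u)$ equips $\yon^M$ with a $\otimes$-comonoid structure $(\yon^M, \delta, \gamma)$ in $\Poly$, whose comultiplication $\delta^\sharp\colon M \times M \to M$ is $(m_1, m_2) \mapsto m_1 * m_2$ and whose counit $\gamma^\sharp\colon 1 \to M$ picks out the unit $u$.

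By the definition of right linear comonoid, the data required on the duality $M\yon \dashvv \yon^M$ is exactly a $\otimes$-comonoid structure on the right dual $\yon^M$. We have just produced such a structure, so combining it with the duality $(\coeval, \eval)$ from \cref{corr: linear-representable-duality-eta} completes the construction; no further coherence must be checked by definition.

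The one remaining step, which I regard as routine bookkeeping that I would omit or relegate to a parenthetical remark, is to record the induced $\tri$-monoid structure $(\mu, \nu)$ on the left dual $M\yon$ obtained by transport along the duality as depicted following \cref{def.rlm}. Using the explicit formulas for $\coeval$ in \cref{eqn: coeval defn} together with $\eval$, one checks that $\mu_1\colon M \times M \to M$ recovers $(m_1, m_2) \mapsto m_1 * m_2$ and that $\nu_1\colon 1 \to M$ picks out $u$, confirming that the resulting $\tri$-monoid on $M\yon$ is the one genuinely arising from the monoid $(M, *, u)$. I do not anticipate any real obstacle, since the whole argument is essentially formal---the only mild subtlety is keeping the contravariance of the $\yon^{(-)}$ construction straight when translating between monoids in $\Set$ and comonoids in $\Set^{\op}$.
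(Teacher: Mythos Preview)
Your proposal is correct and follows exactly the template the paper uses throughout this section: transport a (co)monoid across one of the strong monoidal functors from \cref{prop.lin_rep_strong_monoidal} and pair it with the duality from \cref{corr: linear-representable-duality-eta}. In fact your version is cleaner than the paper's one-line proof, which reads ``The functor $M\mapsto M\yon$ is strong monoidal \ldots\ so it preserves monoids''; that sentence appears to be copy-pasted from \cref{Lemma.Poly.leftLinMon} and, taken literally, produces a $\otimes$-monoid on $M\yon$ rather than the $\otimes$-comonoid on the right dual $\yon^M$ that the definition of right linear comonoid requires. Your choice of the functor $A\mapsto\yon^A$ from \cref{prop.lin_rep_strong_monoidal}(c), together with the observation that a monoid in $\Set$ is a comonoid in $\Set^{\op}$, is the correct instantiation. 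The optional computation of the induced $\tri$-monoid on $M\yon$ is indeed extra and can safely be omitted.
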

\begin{proof}
The functor $M\mapsto M\yon$ is strong monoidal by \cref{prop.lin_rep_strong_monoidal}, so it preserves monoids.
\end{proof}

\subsection{Linear bialgebras}

In LDCs, a linear bialgebra on $b\dual a$ arises from a linear monoid structure interacting bialgebraically with a linear comonoid structure. In this section, we again define a left and right versions of linear bialgebra in $\otimes$-symmetric LDCs, and then we proceed to identify them in $\Poly$. We will see that they can be identified with monoids in $\Set$. 

Let us first recall the definition of a bialgebra in a symmetric monoidal category (SMC).

\begin{definition}[\cite{Han08}]
In a SMC, a {\bf bialgebra} $\bialg{a}$ consists of a monoid $\monoid{a}$ and a comonoid $\comonoid{a}$ 
satisfying the following equations: 
\begin{equation}
\label{eqn.bialg}
\textit{(i)}~~~  \begin{tikzpicture}[yscale=-1]
	\begin{pgfonlayer}{nodelayer}
		\node [style=circle] (0) at (0, 4) {};
		\node [style=circle] (1) at (0, 5) {};
		\node [style=none] (2) at (-0.5, 3) {};
		\node [style=none] (3) at (0.5, 3) {};
	\end{pgfonlayer}
	\begin{pgfonlayer}{edgelayer}
		\draw [bend right, looseness=1.25] (0) to (2.center);
		\draw [bend left, looseness=1.25] (0) to (3.center);
		\draw (1) to (0);
	\end{pgfonlayer}
\end{tikzpicture} = \begin{tikzpicture}[yscale=-1]
	\begin{pgfonlayer}{nodelayer}
		\node [style=circle] (1) at (0, 5) {};
		\node [style=none] (3) at (0, 3) {};
		\node [style=circle] (4) at (0.75, 5) {};
		\node [style=none] (5) at (0.75, 3) {};
	\end{pgfonlayer}
	\begin{pgfonlayer}{edgelayer}
		\draw (3.center) to (1);
		\draw (5.center) to (4);
	\end{pgfonlayer}
\end{tikzpicture} ~~~~~~~~~~~~ \textit{(ii)} ~~~ \begin{tikzpicture}
	\begin{pgfonlayer}{nodelayer}
		\node [style=circle] (0) at (0, 4) {};
		\node [style=circle] (1) at (0, 5) {};
		\node [style=none] (2) at (-0.5, 3) {};
		\node [style=none] (3) at (0.5, 3) {};
	\end{pgfonlayer}
	\begin{pgfonlayer}{edgelayer}
		\draw [bend right, looseness=1.25] (0) to (2.center);
		\draw [bend left, looseness=1.25] (0) to (3.center);
		\draw (1) to (0);
	\end{pgfonlayer}
\end{tikzpicture} = \begin{tikzpicture}
	\begin{pgfonlayer}{nodelayer}
		\node [style=circle] (1) at (0, 5) {};
		\node [style=none] (3) at (0, 3) {};
		\node [style=circle] (4) at (0.75, 5) {};
		\node [style=none] (5) at (0.75, 3) {};
	\end{pgfonlayer}
	\begin{pgfonlayer}{edgelayer}
		\draw (3.center) to (1);
		\draw (5.center) to (4);
	\end{pgfonlayer}
\end{tikzpicture} ~~~~~~~~~~~ \textit{(iii)} ~~~ \begin{tikzpicture}
	\begin{pgfonlayer}{nodelayer}
		\node [style=circle] (1) at (0, 5) {};
		\node [style=circle] (2) at (0, 3) {};
	\end{pgfonlayer}
	\begin{pgfonlayer}{edgelayer}
		\draw (1) to (2);
	\end{pgfonlayer}
\end{tikzpicture} = \id_I  ~~~~~~~~~~~~ \textit{(iv)} ~~~ \begin{tikzpicture}
	\begin{pgfonlayer}{nodelayer}
		\node [style=circle] (0) at (0, 5.25) {};
		\node [style=circle] (1) at (1.25, 5.25) {};
		\node [style=circle] (2) at (0, 3.75) {};
		\node [style=circle] (3) at (1.25, 3.75) {};
		\node [style=none] (4) at (0, 3) {};
		\node [style=none] (5) at (1.25, 3) {};
		\node [style=none] (6) at (0, 6) {};
		\node [style=none] (7) at (1.25, 6) {};
		\node [style=none] (8) at (1.25, 3) {};
	\end{pgfonlayer}
	\begin{pgfonlayer}{edgelayer}
		\draw (1) to (2);
		\draw (0) to (3);
		\draw [bend right=45] (0) to (2);
		\draw [bend left=45] (1) to (3);
		\draw (3) to (8.center);
		\draw (2) to (4.center);
		\draw (6.center) to (0);
		\draw (7.center) to (1);
	\end{pgfonlayer}
\end{tikzpicture} = \begin{tikzpicture}
	\begin{pgfonlayer}{nodelayer}
		\node [style=none] (4) at (0, 3) {};
		\node [style=none] (6) at (0, 6) {};
		\node [style=none] (7) at (1.5, 6) {};
		\node [style=none] (8) at (1.5, 3) {};
		\node [style=circle] (9) at (0.75, 4) {};
		\node [style=circle] (10) at (0.75, 5.25) {};
	\end{pgfonlayer}
	\begin{pgfonlayer}{edgelayer}
		\draw [in=90, out=-165] (9) to (4.center);
		\draw [in=90, out=-15] (9) to (8.center);
		\draw (9) to (10);
		\draw [in=255, out=15] (10) to (7.center);
		\draw [in=285, out=165] (10) to (6.center);
	\end{pgfonlayer}
\end{tikzpicture} 
\end{equation}
\end{definition}

The final equation is often referred to as the {\em bialgebra rule}. 
\cref{eqn.bialg} $(i)$ -- $(iv)$ can equivalently be seen as the multiplication map acting as a comonoid morphism for the 
$\ox$-comonoid $A \ox A$ or the comultiplication acts as a monoid morphism for the 
$\ox$-monoid $A \ox A$ \cite[Lemma 6.19]{Sri21}. 

Linear bialgebras were introduced as structures suitable to study complementary quantum observables in isomix LDCs. 

\begin{definition}
\cite[Defn. 8.29]{Sri21}
	In a symmetric LDC, a {\bf linear bialgebra}, $\frac{(\eta,\epsilon)}{(\eta',\epsilon')}\!\!:\! b \linbialgwb a$, consists of:
	\begin{enumerate}[(i), nosep]
		\item a linear monoid, $(\eta,\epsilon)\!\!:\! b \linmonw a$, and
		\item a linear comonoid, $(\eta',\epsilon')\!\!:\! b \lincomonb a$,
	\end{enumerate}
	such that:
	\begin{enumerate}[(i), nosep]
		\item $\bialg{b}$ is a $\ox$-bialgebra, and 
		\item $\bialgb{a}$ is a $\tri$-bialgebra. 
\end{enumerate}
\end{definition}

As before we fine-grain the definition of linear bialgebra to non-cyclic duals. 
\begin{definition}
	\label{Defn: left linear bialg}
	A {\bf left linear bialgebra}, $\frac{(\eta_L,\epsilon_L)}{(\eta'_L,\epsilon'_L)}\!: b\! \linbialgwl a$ in a $\ox$-symmetric LDC consists of:
	\begin{enumerate}[(i), nosep]
		\item a left linear monoid, $(\eta_L,\epsilon_L)\!\!:\! b \linmonwl a$, denoted below by white $\circ$, and
		\item a left linear comonoid, $(\eta'_L,\epsilon'_L)\!\!:\! b \lincomonwl a$, denoted below by black $\bullet$,
	\end{enumerate}
	such that $\bialg{b}$ is a $\ox$-bialgebra, that is, the left dual is a bialgebra via the left linear monoid and comonoid.
\end{definition}

\begin{definition}
	\label{Defn: right linear bialg}
	A {\bf right linear bialgebra}, $\frac{(\eta_R,\epsilon_R)}{(\eta'_R,\epsilon'_R)}\!\!:\! b \linbialgwr a$ in a $\ox$-symmetric LDC consists of:
	\begin{enumerate}[(i), nosep]
		\item a right linear monoid, $(\eta_R,\epsilon_R)\!\!:\! b \linmonwr a$, denoted below by white $\circ$, and
		\item a right linear comonoid, $(\eta'_R,\epsilon'_R)\!\!:\! b \lincomonwr a$, denoted below by black $\bullet$,
	\end{enumerate}
	such that $\bialg{a}$  is a $\ox$-bialgebra, that is, the right dual is a bialgebra via the right linear monoid and comonoid.
\end{definition}

$\Poly$ is a $\ox$-symmetric LDC, and it has a left linear bialgebra and a right linear bialgebra for every monoid $M: \Set$.

\begin{lemma}\label{lemma.monoid_bialg}
Suppose $(M, *, e)$ is a monoid in $\Set$. 
\begin{enumerate}[i., nosep]
\item The left linear monoid $(\coeval, \eval): M \yon \linmonwl \yon^M$ as defined in \cref{Lemma.Poly.leftLinMon}, and left linear comonoid $(\coeval, \eval): M \yon \lincomonwl \yon^M$ as defined in \cref{Lemma.Poly.leftLinComon}  together produce a left linear bialgebra.
\item The right linear monoid $(\coeval, \eval): M \yon \linmonwr \yon^M$ as defined in \cref{Lemma.Poly.rightLinMon}, and right linear comonoid $(\coeval, \eval): M \yon \lincomonwr \yon^M$ as defined in \cref{Lemma.Poly.rightLinComon}  together produce a right linear bialgebra.
\end{enumerate}
\end{lemma}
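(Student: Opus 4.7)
The plan is to reduce both claims to the strong monoidality of the functors $A \mapsto A\yon$ and $A \mapsto \yon^A$ recorded in \cref{prop.lin_rep_strong_monoidal}. The $\ox$-bialgebra structures required by \cref{Defn: left linear bialg,Defn: right linear bialg} will then come ``for free'', so no heavy string-diagram computation inside $\Poly$ is needed.

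First, note that every monoid $(M, *, e)$ in $(\Set, 1, \times)$ is canonically a bialgebra with the unique diagonal $\times$-comonoid: $\delta\colon a \mapsto (a,a)$ and $\gamma\colon a \mapsto *$. The bialgebra compatibilities of \eqref{eqn.bialg} are immediate on elements: for instance both $\delta \circ \mu$ and the right-hand side of the bialgebra rule send $(a_1, a_2)$ to $(a_1 * a_2, a_1 * a_2)$; the (co)unital clauses are equally trivial since every set has a unique $\times$-comonoid structure.

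For (i), the functor $A \mapsto A\yon$ is strong monoidal $(\Set, 1, \times) \to (\Poly, \yon, \ox)$ by \cref{prop.lin_rep_strong_monoidal}(a), and strong monoidal functors preserve bialgebras. Transporting the canonical bialgebra on $M$ yields a $\ox$-bialgebra $M\yon$ whose multiplication and unit match \eqref{eqn.1}--\eqref{eqn.2} and whose comultiplication and counit match \eqref{eqn.5}--\eqref{eqn.6}. Combined with the linear duality from \cref{corr: linear-representable-duality-eta}, the left linear monoid of \cref{Lemma.Poly.leftLinMon}, and the left linear comonoid of \cref{Lemma.Poly.leftLinComon}, this exhibits a left linear bialgebra in the sense of \cref{Defn: left linear bialg}.

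For (ii), the functor $A \mapsto \yon^A$ is strong monoidal $(\Set^\op, 1, \times) \to (\Poly, \yon, \ox)$ by \cref{prop.lin_rep_strong_monoidal}(c). Because the bialgebra axioms are self-dual, $M$ is equally a bialgebra in $(\Set^\op, 1, \times)$, so the same transport argument produces a $\ox$-bialgebra structure on $\yon^M$ whose $\ox$-monoid part matches \eqref{eqn.3}--\eqref{eqn.4} and whose $\ox$-comonoid part is the $\yon^{(-)}$-image of the monoid $M$. These are the structures from \cref{Lemma.Poly.rightLinMon,Lemma.Poly.rightLinComon}, so by \cref{Defn: right linear bialg} we obtain the right linear bialgebra. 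The only point requiring any care is matching the explicit position/direction formulas \eqref{eqn.1}--\eqref{eqn.6} against the image under the strong monoidal functors, which is a direct unfolding and presents no essential obstacle.
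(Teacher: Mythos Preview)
Your proposal is correct and is essentially the same approach as the paper's sketch, only made more explicit: the paper simply says to verify the $\ox$-bialgebra axioms for $M\yon$ (resp.\ $\yon^M$) from the explicit formulas \eqref{eqn.1}--\eqref{eqn.6}, whereas you observe that this verification factors through the strong-monoidal transport of the canonical bialgebra on $M$ in $(\Set,1,\times)$ (resp.\ $(\Set^\op,1,\times)$), which is indeed the cleanest way to see why the direct check succeeds. The only implicit point you might flag is that the functors of \cref{prop.lin_rep_strong_monoidal}(a),(c) are in fact \emph{symmetric} strong monoidal, which is what is needed to preserve the bialgebra rule \eqref{eqn.bialg}(iv); this is immediate from the formulas but worth stating.
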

\begin{proof}[Sketch of proof]
Using the $\ox$-monoid structure specified in \cref{eqn.1,eqn.2} and the $\ox$-comonoid structure specified in \cref{eqn.5,eqn.6} to prove that $M\yon$ is a $\ox$-bialgebra. The same idea applies to proving that the right linear monoid and comonoid gives a $\ox$-bialgebra on $\yon^M$.
\end{proof}

\begin{corollary}
In $\Poly$, we have that $M \yon \linbialgwl \yon^M$ is a left linear bialgebra and a right linear bialgebra iff $M$ is a monoid in $\Set$.
\end{corollary}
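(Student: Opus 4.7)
The forward direction is precisely \cref{lemma.monoid_bialg}: starting from a monoid $(M, *, e)$ in $\Set$, that lemma explicitly constructs both a left and a right linear bialgebra on $M\yon \dual \yon^M$ using the units and counits $(\coeval, \eval)$ from \cref{corr: linear-representable-duality-eta}.

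For the converse, my plan is to extract a $\Set$-monoid structure on $M$ from the $\otimes$-monoid structure on $M\yon$ that is part of any left linear bialgebra (by \cref{Defn: left linear bialg}, a left linear bialgebra on $M\yon \dual \yon^M$ comes equipped with a left linear monoid, hence with maps $\mu: M\yon \otimes M\yon \to M\yon$ and $\nu: \yon \to M\yon$ satisfying the $\otimes$-monoid axioms). The key tool is that the functor $A \mapsto A\yon\colon (\Set, 1, \times) \to (\Poly, \yon, \otimes)$ from \cref{prop.lin_rep_strong_monoidal} is not merely strong monoidal but also fully faithful. Indeed, specializing \eqref{eqn.poly_map},
\[
\Poly(A\yon, B\yon) \cong \prod_{a:A}\sum_{b:B}\prod_{1}\sum_{1} 1 \cong B^A \cong \Set(A,B),
\]
with the bijection sending $\varphi \mapsto \varphi_1$ (the on-directions maps $\varphi^\sharp_a: 1 \to 1$ being forced). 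Since a fully faithful strong monoidal functor reflects monoid structures, the $\otimes$-monoid $(M\yon, \mu, \nu)$ determines a unique monoid $(M, \mu_1, \nu_1)$ in $\Set$.

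Alternatively, I can extract the monoid from the right linear bialgebra: it carries a $\otimes$-monoid structure on $\yon^M$, and the functor $A \mapsto \yon^A\colon (\Set^\op, 1, \times) \to (\Poly, \yon, \otimes)$ is strong monoidal (\cref{prop.lin_rep_strong_monoidal}) and, by an analogous computation with \eqref{eqn.poly_map}, fully faithful. So a $\otimes$-monoid on $\yon^M$ corresponds to a comonoid on $M$ in $\Set^\op$, equivalently a monoid on $M$ in $\Set$. Either route suffices, and the two extracted monoids coincide because both are computed from the same original data via \cref{lemma.monoid_bialg}.

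I do not anticipate a real obstacle: the content of the corollary is a transport of algebraic structure along a fully faithful strong monoidal functor. The only check is the fully faithfulness displayed above, which is immediate from the hom-formula in \eqref{eqn.poly_map}.
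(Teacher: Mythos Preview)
Your proof is correct and follows essentially the same route as the paper, whose one-line argument cites \cref{prop.lin_rep_strong_monoidal}, \cref{lemma.monoid_bialg}, and \cref{Thm: only Duals in Poly}; you simply make explicit the fully faithfulness of $A\mapsto A\yon$ (and $A\mapsto\yon^A$) that the paper leaves implicit in its appeal to \cref{prop.lin_rep_strong_monoidal}. One minor remark: your direction labels are swapped relative to the statement ``bialgebra iff monoid'', so what you call the forward direction (monoid $\Rightarrow$ bialgebra, via \cref{lemma.monoid_bialg}) is really the converse.
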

\begin{proof}
Direct consequence of \cref{prop.lin_rep_strong_monoidal,lemma.monoid_bialg,Thm: only Duals in Poly}.
\end{proof}

%Bibliography
%\bibliographystyle{plain}
\addcontentsline{toc}{section}{References}

\printbibliography

\end{document}